\documentclass{amsart}
\usepackage{hyperref}

\usepackage{amsrefs,amsthm,amsmath,amsfonts,amssymb,graphicx,latexsym,color}

\newtheorem{theorem}{Theorem}
\newtheorem*{assumeU}{Assumption U}

\newtheorem*{assumeL}{Assumption L}

\newtheorem*{corollary}{Corollary L}
\newtheorem{lemma}{Lemma}

\newtheorem{proposition}[lemma]{Proposition}

\theoremstyle{definition}

% Brief Macros List

\newcommand{\R}{{\mathbb R}}

\newcommand{\ang}[1]{ \left< {#1} \right> }
\newcommand{\ext}[1]{ \underline{ {#1} } }
\newcommand{\ind}{ {\mathbf 1}}
\newcommand{\sph}{{\mathbb S}^{n-1}}
\newcommand{\nsm}{|}

\newcommand{\set}[2]{ \left\{ #1 \ \left| \ #2 \right. \right\} }
\newcommand{\ip}[2]{ \left< #1 , #2 \right>}

\newcommand{\last}{{n+1}}
\newcommand{\ddim}{{n}}
\newcommand{\threed}{{\mathbb R}^n}
\newcommand{\eqdef}{\overset{\mbox{\tiny{def}}}{=}}
\newcommand{\opGplus}{\mathcal{D}^k_{+}}
\newcommand{\opGminus}{\mathcal{D}^k_{-}}
\newcommand{\opGstar}{\mathcal{D}^k_{*}}
\newcommand{\opGstarS}{\mathcal{O}_{*}}
\newcommand{\collOPd}{\mathcal{Q}_g}
\newcommand{\ScollOPd}{\mathcal{Q}_g^*}
\newcommand{\Ctheta}{\vartheta}
\newcommand{\Cupper}{C_g}
\newcommand{\Clower}{\tilde{C}_g}
\newcommand{\CurveP}{\zeta}
\newcommand{\CurveEP}{\ext{\zeta}}
\newcommand{\dsp}{{\mathbb S}}
\newcommand{\ballR}{{B_R}}
\newcommand{\nspace}{N^{s,\gamma}}
\newcommand{\AGthing}{A_{g}^{G}}

\newcommand{\domain}{\Omega}

% NUMBERING SCHEME
%\numberwithin{equation}{section}
%\numberwithin{theorem}{section}
%\numberwithin{corollary}{section}
%\numberwithin{lemma}{section}

% set the depth for the table of contents (0-2)
\setcounter{tocdepth}{1}

\begin{document}

\title[Sharp anisotropic estimates for the Boltzmann collision operator]{Sharp anisotropic estimates for the Boltzmann collision operator and its entropy production}

\author[P. T. Gressman]{Philip T. Gressman}
\address{University of Pennsylvania, Department of Mathematics, David Rittenhouse Lab, 209 South 33rd Street, Philadelphia, PA 19104-6395, USA} 
\email{gressman at math.upenn.edu \& strain at math.upenn.edu}
\urladdr{http://www.math.upenn.edu/~gressman/ \& http://www.math.upenn.edu/~strain/}
%\address{University of Pennsylvania, Department of Mathematics, David Rittenhouse Lab, 209 South 33rd Street, Philadelphia, PA 19104-6395, USA \bigskip} 
%\email{gressman at math.upenn.edu}
%\urladdr{http://www.math.upenn.edu/~gressman/}
\thanks{P.T.G. was partially supported by the NSF grant DMS-0850791, and an Alfred P. Sloan Foundation Research Fellowship.}

\author[R. M. Strain]{Robert M. Strain}
%\address{(RMS) 
%University of Pennsylvania, Department of Mathematics, David RittenhouseLab, 209 South 33rd Street, Philadelphia, PA 19104-6395, USA} 
%\email{strain at math.upenn.edu}
%\urladdr{http://www.math.upenn.edu/~strain/}
\thanks{R.M.S. was partially supported by the NSF grant DMS-0901463, and an Alfred P. Sloan Foundation Research Fellowship.}

\subjclass[2010]{35Q20, 35R11, 76P05, 82C40,  35B65, 26A33}
\keywords{Kinetic Theory, Boltzmann equation, long-range interaction, non cut-off, soft potentials, hard potentials, fractional derivatives, anisotropy, Harmonic analysis. }

\begin{abstract}
This article provides sharp constructive upper and lower bound estimates for the  %non-linear 
Boltzmann collision operator 
with the full range of physical non cut-off collision kernels ($\gamma > -n$ and $s\in (0,1)$)
 in the trilinear $L^2(\threed)$ energy $\langle \mathcal{Q}(g,f),f\rangle$.  These new estimates prove that, for a very general class of $g(v)$, the  global diffusive behavior (on $f$) in the energy space is that of the geometric fractional derivative semi-norm identified in the linearized context in our earlier works \cite{gsNonCutJAMS,gsNonCutA}.  We further prove new global entropy production estimates with the same anisotropic semi-norm.  
 This resolves the longstanding, widespread heuristic conjecture about the sharp diffusive nature of the non cut-off Boltzmann collision operator in the energy space $L^2(\threed)$.
\end{abstract}
\maketitle

\thispagestyle{empty}

\tableofcontents

\section{Introduction and main results}

Our motivation for this study 
is derived from  the physical {\em Boltzmann equation},  
$$
  \frac{\partial f}{\partial t} + v \cdot \nabla_x f = {\mathcal Q}(f,f),
$$
where the unknown $f(t,x,v)$ is a nonnegative function.  For each time $t\ge 0$, the solution $f(t, \cdot, \cdot)$ represents the empirical measure of particles. %in the phase space. 
The spatial coordinates are generally $x\in \domain$ and the velocities are $v\in\threed$, where $\domain \subset \threed$ is a domain $(\ddim\ge2)$.  The Boltzmann equation, derived in 1872, is one of the fundamental equations of mathematical physics and, in particular,
 a cornerstone of statistical physics.

In this article, we prove new sharp, anisotropic fractional derivative estimates for the
  {\em Boltzmann collision operator} ${\mathcal Q}$.  
This is a bilinear operator which is given by
  \begin{equation}
  {\mathcal Q} (g,f)(v) \eqdef 
  \int_{\threed}  dv_* 
  \int_{\sph}  d\sigma~ 
  B(v-v_*, \sigma) \, 
  \big[ g'_* f' - g_* f \big].
  \label{BoltzCOL}
  \end{equation}
We are using the standard shorthand $f = f(v)$, $g_* = g(v_*)$, $f' = f(v')$, 
$g_*^{\prime} = g(v'_*)$. 
In this expression, $v$, $v_*$  and  $v'$, $v' _*$ are 
the velocities in $\threed$ of a pair of particles before and after collision.  They are connected through the formulas
  \begin{equation}
  v' = \frac{v+v_*}{2} + \frac{|v-v_*|}{2} \sigma, \qquad
  v'_* = \frac{v+v_*}{2} - \frac{|v-v_*|}{2} \sigma,
  \qquad \sigma \in \sph.
  \label{sigma}
  \end{equation}
  These formulas correspond physically to elastic collisions with conserved quantities
  $$
v+v_* = v^{\prime } + v^{\prime }_*,
\quad
|v|^2+|v_*|^2 = |v^\prime|^2+|v_*^\prime|^2.
$$
%\begin{equation}
%\begin{array}{rcl}
%v+v_* &=& v^{\prime } + v^{\prime }_*,
%\\
%|v|^2+|v_*|^2 &=& |v^\prime|^2+|v_*^\prime|^2.
%\end{array}
%%\label{energy}
%\notag
%\end{equation}
%We specifically  discuss Carleman-type representations 
% in the appendix of this paper. 
Our main focus is to study the sharp, anisotropic fractional diffusive effects induced by this operator under fully general physical assumptions on the collision kernel.

 The {\em Boltzmann collision kernel} $B(v-v_*, \sigma)$ for a monatomic gas is a nonnegative function which 
only depends on the {\em relative velocity} $|v-v_*|$ and on
the {\em deviation angle}  $\theta$ through 
$\cos \theta = \ip{ k }{ \sigma}$ where $k = (v-v_*)/|v-v_*|$ and $\langle \cdot, \cdot \rangle$ is the usual scalar product in $\threed$. 
%This is equivalently   
%$$
%\cos \theta = \ip{v'_*-v'}{ v_*-v} /|v_*-v|^2.
%$$ 
Without loss of generality we may assume that  $B(v-v_*, \sigma)$
is supported on $\ip{ k }{ \sigma} \ge 0$, i.e. $0 \le \theta \le \frac{\pi}{2}$.
Otherwise we can  reduce to this situation %by replacing the collision kernel 
with the following customary ``symmetrization'': 
$$
\overline{B}(v-v_*, \sigma)
=
\left[ 
B(v-v_*, \sigma)
+
B(v-v_*, -\sigma)
\right]
{\bf 1}_{\ip{ k }{ \sigma} \ge 0}.
$$ 
Above and generally, ${\bf 1}_{A}$ is the usual indicator function of the set $A$.

\subsection*{The Collision Kernel}

Our assumptions are as follows:  
 \begin{itemize}
 \item 
We suppose that $B(v-v_*, \sigma)$ takes product form in its arguments as
\begin{equation}
B(v-v_*, \sigma) =\Phi( |v-v_*| ) \, b(\cos \theta).
\notag
\end{equation}
% For the hard-sphere kernel $\gamma = b = 1$.
In general, both $b$ and $\Phi$ are non-negative functions. 
 
  \item The angular function $t \mapsto b(t)$ is not locally integrable; for $c_b >0$ it satisfies
\begin{equation}
\frac{c_b}{\theta^{1+2s}} 
\le 
\sin^{n-2} \theta \  b(\cos \theta) 
\le 
\frac{1}{c_b\theta^{1+2s}},
\quad
s \in (0,1),
\quad
   \forall \, \theta \in \left(0,\frac{\pi}{2} \right].
   \label{kernelQ}
\end{equation}
   Some authors use the notation 
$\nu = 2s \in (0,2)$, which is equivalent. 
 
  \item The kinetic factor $z \mapsto \Phi(|z|)$ satisfies for some $C_\Phi >0$
\begin{equation}
%\frac{1}{C_\Phi}  |v-v_*|^\gamma \le
\Phi( |v-v_*| ) =  C_\Phi  |v-v_*|^\gamma , \quad \gamma  > -n.
\label{kernelP}
\end{equation}
 \end{itemize}

Our main physical motivation is derived from  particles interacting according to a spherical intermolecular repulsive potential of the form
$$
\phi(r)=r^{-(p-1)} , \quad p \in (2,+\infty).
$$
For these potentials, Maxwell %\cite{Maxwell1867} 
in 1866 showed that 
the kernel $B$ can be estimated.   In dimension $n=3$, $B$ satisfies the conditions above with 
$\gamma = (p-5)/(p-1)$ 
and $s = 1/(p-1)$; 
see for instance \cite{MR1307620,MR1942465}.
Thus the conditions in  \eqref{kernelQ} and \eqref{kernelP} include all of the potentials $p > 2$ in the physical dimension $n=3$ as a particular case.
Notice that the Boltzmann collision operator is not well defined for $p=2$, see \cite{MR1942465}.

The celebrated Boltzmann $H$-theorem is one of the hallmarks of statistical physics. 
We define the H-functional by
$
{H}(t)\eqdef-\int_{\domain}dx~ \int_{\threed}dv~ f\log f
$
for a suitable domain $\domain$.
Then the Boltzmann H-theorem predicts that, for solutions of the Boltzmann equation, the entropy is increasing over time; formally (neglecting the boundary of $\domain$), this corresponds to the statement
$$
\frac{d{H}(t)}{dt} =  \int_{\domain}dx~ D(f,f) \ge 0.
$$
This is a demonstration of the second law of thermodynamics.  Here the entropy production functional, which is nonnegative, is given by
%\begin{multline}
%\label{entropyPRODf}
%D(g,f) \eqdef -\int_{\threed} ~ dv ~ \mathcal{Q}(g,f)\log f
%\\
%=
%\frac{1}{4}
%\int_{\threed}dv \int_{\threed} dv_* \int_{\sph} d\sigma ~ B(v-v_*, \sigma) \left(g_*' f' -  g_* f \right)\log \frac{f' f'_*}{f f_*}.
%\end{multline}
\begin{equation}
\label{entropyPRODf}
D(g,f) \eqdef -\int_{\threed} ~ dv ~ \mathcal{Q}(g,f)\log f.
\end{equation}
Moreover, this functional is zero if and only if it is operating on a Maxwellian equilibrium, e.g. \eqref{maxwellianEQUI}.
This predicts 
that the Boltzmann equation exhibits irreversible dynamics and convergence to Maxwellian in large time.

In our recent works \cite{gsNonCutA,gsNonCutJAMS} on the global-in-time stability of the Boltzmann equation with the physical collision kernels \eqref{kernelQ} and \eqref{kernelP}, and near Maxwellian initial data, we introduced into the Boltzmann theory the 
following anisotropic norm:
%$$
%\nspace
%\eqdef 
%\left\{ f \in \mathcal{S}'(\threed): 
% \nsm f\nsm_{\nspace} <\infty
%\right\},
%$$
%where  we specify the  norm by
\begin{equation} 
\nsm f\nsm_{\nspace}^2 
\eqdef 
\nsm f\nsm_{L^2_{\gamma+2s}}^2 + \int_{\threed} dv ~ \int_{\threed} dv' ~ 
(\ang{v}\ang{v'})^{\frac{\gamma+2s+1}{2}}
~
 \frac{(f' - f)^2}{d(v,v')^{n+2s}} 
\ind_{d(v,v') \leq 1}. %\label{normdef} 
\notag
\end{equation}
Here we use one of the weighted $L^p_{\ell}$ spaces, $\ell \in \R$, with norm given by
$$
\nsm f\nsm_{L^p_{\ell}}^p 
\eqdef
\int_{\threed} dv ~ 
\ang{v}^{\ell}
~
|f(v)|^p,
\quad 
1\le p < \infty.
$$
The weight is
$
\ang{v}
\eqdef \sqrt{1+|v|^2}.
$
If $\ell = 0$ we will write $\nsm f\nsm_{L^p_{0}} = \nsm f\nsm_{L^p}$.  We also record here the ``dotted''  semi-norm
 \begin{equation} 
\nsm f\nsm_{\dot{N}^{s,\gamma}}^2 
\eqdef 
\int_{\threed} dv ~ \int_{\threed} dv' ~ 
(\ang{v}\ang{v'})^{\frac{\gamma+2s+1}{2}}
~
 \frac{(f(v') - f(v))^2}{d(v,v')^{n+2s}} 
\ind_{d(v,v') \leq 1}. \label{normDOTdef} 
\end{equation}
The fractional differentiation effects are measured 
using the following anisotropic  metric $d(v,v')$ on the ``lifted'' paraboloid:
$$
d(v,v') \eqdef \sqrt{ |v-v'|^2 + \frac{1}{4}\left( |v|^2 -  |v'|^2\right)^2}.
$$
The inclusion of the quadratic difference $|v|^2 - |v'|^2$ is an essential component of the anisotropic fractional differentiation effects induced by the Boltzmann collision operator; it is not a lower order term as we will see in the following.
Heuristically, this metric encodes the anisotropic changes in the power of the weight, which are non-locally
entangled with the  fractional differentiation effects.

We have shown in \cite[Section 2.3]{gsNonCutJAMS} that $\nspace$ sharply characterizes the Dirichlet form of the linearized collision operator.  
In this work, we will  show, perhaps  more interestingly, that the anisotropic diffusive semi-norm \eqref{normDOTdef} sharply characterizes the diffusive effects of the trilinear energy of the non-linear collision operator \eqref{BoltzCOL} under general conditions.  
We will furthermore prove that the diffusive effects of the entropy production functional \eqref{entropyPRODf} are also globally coercively controlled by \eqref{normDOTdef}.
To proceed further, we denote the $L^2(\threed)$ inner product by $\ang{\cdot, \cdot }$, 
which because of the context should not be confused with the scalar product in $\threed$.

It is well known that the Boltzmann collision operator \eqref{BoltzCOL} exhibits parametrized diffusion.   
For this reason we study the operator $\collOPd$, which is defined such that
   \begin{multline*}
\ang{ \mathcal{Q}(g,f), h }
=
  \int_{\threed}  dv
  \int_{\threed}  dv_* 
  \int_{\mathbb{S}^{n-1}}  d\sigma~ 
  B(v-v_*, \sigma) \, 
  \big[ g'_* f' - g_* f \big]~ h
\\
=
  \int_{\threed}  dv
  \int_{\threed}  dv_* 
  \int_{\mathbb{S}^{n-1}}  d\sigma~ 
  B(v-v_*, \sigma) ~ g_* f~  (h' - h)
\eqdef
\ang{ f, \collOPd h }.
  \end{multline*}
%Here we think of the function $g$ as a parameter.
This follows from the pre-post collisional change of variables $(v, v_*,\sigma) \to  (v', v_*',k)$, with unit Jacobian, which is standard \cite{MR1942465}.
We recall the widely used decomposition (from \cite{MR1715411}, \cite{MR1765272})
$\ang{ f, \collOPd f } = -N_g(f) + K_g(f)$
where
   \begin{equation}
   \label{cancellationSPLIT}
    \begin{split}
N_g(f)
%=
%  \int_{\threed}  dv
%  \int_{\threed}  dv_* 
%  \int_{\mathbb{S}^{n-1}}  d\sigma~ 
%  B(v-v_*, \sigma) ~ g_* f~ (f' - f)
%\\
&\eqdef
  \frac{1}{2} \int_{\threed}  dv
  \int_{\threed}  dv_* 
  \int_{\mathbb{S}^{n-1}}  d\sigma~ 
  B(v-v_*, \sigma) ~ g_* ~ (f' - f)^2,
  \\
K_g(f)
&\eqdef
 \frac{1}{2} \int_{\threed}  dv
  \int_{\threed}  dv_* 
  \int_{\mathbb{S}^{n-1}}  d\sigma~ 
  B(v-v_*, \sigma) ~ g_* ~ \big[(f')^2 - (f)^2\big].
  \end{split}
  \end{equation}
Now it is known from the cancellation lemma \cite{MR1765272} that the second term $K_g(f)$ does not differentiate at all, as will be seen in Section \ref{sec:ls} below.

In what follows, we make two assumptions on  $g=g(v)\ge 0$ from \eqref{cancellationSPLIT}:

\begin{assumeU}
For the upper bound inequalities below we suppose that $g$ satisfies
\begin{equation}
\int_{\threed} dv_* ~ |v - v_*|^{a} \ang{v_*}^i  \left|  g(v_*) \right| 
\le \Cupper \langle v \rangle^{a},
\quad a \in [\gamma, \gamma+2s],
\quad \exists \Cupper>0.
\label{assumeAupper}
\end{equation}
If $s \in (0, 1/2)$, we take $i =1$, and if $s\in [1/2, 1)$, we 
use the power $i =2$. 
\end{assumeU}

Regarding Assumption U:  
%In \eqref{assumeAupper} 
if $a\ge 0$ or if $|v - v_*|$ is replaced by  
the regularized kinetic factor
$
\ang{v - v_*}
$ 
then \eqref{assumeAupper} will automatically hold with $\Cupper \approx | g |_{L^1_{i+|a|}}$.  Alternatively, for any $a>-\ddim$, condition \eqref{assumeAupper} is satisfied, for example, by any bounded function which decays at infinity polynomially faster than order $i+\ddim$ (which will be the case whenever $g$ belongs to any weighted Sobolev space of sufficiently high regularity and sufficiently rapid growth of the weight at infinity).
For the coercive lower bounds in Theorems \ref{mainLOWERthm} and \ref{entTHM} we assume:

\begin{assumeL}
Let $R>\delta>0$ be fixed. Suppose  the nonnegative, measurable function $g$ satisfies
\begin{equation}
\int_{\ballR \setminus T_\delta} dv_* ~ g_* \geq \Clower,
\label{assumeAlower}
\end{equation}
uniformly for some positive constant $\Clower$ where $\ballR$ is the Euclidean ball of radius $R$ centered at the origin and $T_{\delta}$ is any linear tube of radius $\delta$.
\end{assumeL}

Notice that such a $g$ in Assumption L need not belong to $L^1(\threed)$ so long as it is locally integrable.  In fact, one can show that any nonnegative, locally integrable $g$ will satisfy \eqref{assumeAlower} unless it equals zero almost everywhere on $\ballR$.   This is considerably more general than the assumptions used in \cite{MR1765272}, which prove  previously-known entropy production estimates.
It includes functions $g$ which satisfy only ``local conservation laws'' for the Boltzmann equation as the following Corollary shows:

\begin{corollary}
Suppose $g=g(v) \ge 0$ is a function belonging to the spaces $L^1(\ballR)$ and $L \log L(\ballR)$ with non-zero norms.  Then \eqref{assumeAlower} and \eqref{mainlower} hold for a constructive constant $\Clower>0$ depending on $R$ and the  norms just mentioned.
\end{corollary}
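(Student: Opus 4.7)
The plan is to derive \eqref{assumeAlower} directly from the $L\log L$ equi-integrability of $g$ on $\ballR$ together with the elementary observation that a linear tube of fixed radius $\delta$ occupies only a small portion of $\ballR$. Write $M_1 \eqdef \nsm g \nsm_{L^1(\ballR)} > 0$ and let $M_2$ denote the $L\log L$ norm of $g$ on $\ballR$; both are positive and finite by hypothesis.

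First I would record the standard quantitative equi-integrability estimate: for every measurable $A \subset \ballR$ and every $M > 1$,
\begin{equation*}
\int_A g(v_*) \, dv_* \;\le\; M\,|A| \;+\; \frac{M_2}{\log M},
\end{equation*}
obtained by the pointwise split $g = g\,\ind_{\{g \le M\}} + g\,\ind_{\{g > M\}}$ and the inequality $g \le g\log(e+g)/\log M$ on the set $\{g > M\}$. Optimizing in $M > 1$ produces a modulus $\Psi_{M_2}(|A|)$ that tends to $0$ as $|A| \to 0$, with an explicit, constructive rate depending only on $M_2$.

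Next I would combine this with the geometric bound $|\ballR \cap T_\delta| \le c_{\ddim} R\, \delta^{\ddim-1}$, valid for any linear tube $T_\delta$ since such a tube is a cylinder with an $(\ddim-1)$-dimensional cross-section of radius $\delta$ and length at most $2R$ inside $\ballR$. This yields, uniformly over all linear tubes of radius $\delta$,
\begin{equation*}
\int_{\ballR \cap T_\delta} g \, dv_* \;\le\; \Psi_{M_2}\bigl(c_{\ddim} R\, \delta^{\ddim-1}\bigr),
\end{equation*}
and subtracting from $M_1$ delivers \eqref{assumeAlower} with the explicit choice $\Clower \eqdef M_1 - \Psi_{M_2}\bigl(c_{\ddim} R\, \delta^{\ddim-1}\bigr)$, which depends constructively on $R$, $\delta$, $M_1$, and $M_2$, i.e., on $R$ and the two norms in the corollary.

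The main obstacle is ensuring positivity of $\Clower$: quantitatively one selects the truncation $M \sim \exp(2 M_2 / M_1)$ so that $M_2/\log M \le M_1/2$, and then the leading term $M \cdot c_{\ddim} R\, \delta^{\ddim-1}$ is smaller than $M_1/2$ once $\delta^{\ddim-1}$ lies below an explicit threshold involving $R$, $M_1$, and $M_2$; this is the precise quantitative role played by the $L\log L$ hypothesis beyond the bare $L^1$ one, since an $L^1$-bound alone would permit arbitrary concentration in a small-measure tube. Once \eqref{assumeAlower} holds with this constructive $\Clower$, the second conclusion \eqref{mainlower} of the corollary is automatic by feeding the verified Assumption L into the main coercive lower bound, Theorem \ref{mainLOWERthm}.
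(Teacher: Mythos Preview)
Your proof is correct and follows essentially the same approach as the paper: both arguments use the $L\log L$ hypothesis to bound $\int_{\ballR\cap T_\delta} g$ uniformly in terms of the small measure $|\ballR\cap T_\delta|$, then subtract from the total mass and choose $\delta$ small. The only cosmetic difference is that the paper extracts the equi-integrability via Jensen's inequality for the convex function $x\mapsto x\ln(e+x)$, whereas you use the equivalent truncation-and-split estimate $\int_A g \le M|A| + M_2/\log M$; the resulting constructive dependence on $R$, $M_1$, $M_2$ is the same.
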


%The lower bound \eqref{assumeAlower} is a generalization of the condition used by Carleman \cite{MR1555365}  in  during his initial study of the spatially homogeneous Boltzmann equation.   

Moreover, the Boltzmann H-theorem predicts 
that solutions to the Boltzmann equation converge as $t \to \infty$ to the following Maxwellian equilibrium states: 
\begin{equation}
\mu(\rho, u, T)(v) \eqdef \frac{\rho}{(2\pi T)^{n/2}} \exp\left(- \frac{|u-v|^2}{2T}\right),
\label{maxwellianEQUI}
\end{equation}
where $\rho$, $u$, and $T$ are the density, mean velocity and temperature of the gas respectively.   
Thus it can be expected that for $\rho>0$ and $T<\infty$ our assumptions \eqref{assumeAupper} and \eqref{assumeAlower} will be satisfied for all time by any sufficiently regular solution of the Boltzmann equation which respects the H-theorem.

We are now ready to state our main results.

\begin{theorem}\label{mainLOWERthm}  
(Main coercive inequality)
Suppose that \eqref{assumeAlower} holds.
Then
\begin{equation} 
N_g(f)  
\ge 
C_1  \nsm f\nsm_{\dot{N}^{s,\gamma}}^2, 
\label{mainlower}
\end{equation}
%\begin{equation} 
%\begin{split}
%|g|_{L^1} \int_{\threed} dv_* & \int_{\threed} dv \int_{\sph} d \sigma B (f' - f)^2 g_*  \\
% & \gtrsim \Clower^2 \! \int_{\threed} \!  \! d v \! \int_{\threed} \!  \! d v' \frac{(f - f')^2}{d(v,v')^{n+2s}} \ang{v'}^{\gamma+2s+1} {\mathbf 1}_{d(v,v') \leq 1} 
%\end{split} \label{mainlower}
%\end{equation}
where
$
C_1 
\eqdef 
%\frac{C_{n, R,\delta} \Clower^2}{|g|_{L^1(\ballR)} } 
C_{n,R,\delta} \Clower^2 / |g|_{L^1(\ballR)} 
$
and  $C_{n,R,\delta}>0$ only depends upon $n$, $R$, $\delta$, \eqref{kernelQ} and \eqref{kernelP}.  In particular, \eqref{assumeAupper} and \eqref{assumeAlower}  together imply
\begin{equation}
- \ang{\mathcal{Q}(g, f),f}  
\geq 
C_1  \nsm f \nsm_{\dot{N}^{s,\gamma}}^2 
- C_2 \Cupper \nsm f\nsm^2_{L^2_{\gamma}},
\label{lowerSHARP}
\end{equation}
where all the positive constants, in particular $C_{n,R,\delta}$ %, $C_1$ 
and $C_2$, are constructive.
\end{theorem}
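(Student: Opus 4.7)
My strategy for the proof proceeds in three stages. First, I observe that the bound \eqref{lowerSHARP} is a corollary of the coercive estimate \eqref{mainlower}. Indeed, the decomposition \eqref{cancellationSPLIT} rewrites $-\langle\mathcal{Q}(g,f),f\rangle = N_g(f) - K_g(f)$, and the cancellation lemma of \cite{MR1765272} expresses $K_g(f) = \int_{\threed}|f(v)|^2(S\ast g)(v)\,dv$ with $S(z)\lesssim|z|^\gamma$; Assumption U with $a=\gamma$ then yields $|K_g(f)|\le C_2\Cupper\nsm f\nsm_{L^2_\gamma}^2$. The main work is thus to establish \eqref{mainlower}.

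For \eqref{mainlower}, my plan is to pass to a Carleman-type representation. Fixing $v$ and changing variables from $(v_*,\sigma)\in\threed\times\sph$ to $(v',v'_*)$ forces $v'_*=v+v_*-v'$ and restricts $v_*$ to the affine hyperplane $E_{v,v'}=\{w:(w-v')\cdot(v-v')=0\}$. Combining the angular factor $\sin^{n-2}\theta\,b(\cos\theta)$ with the surface Jacobian and the relation $\sin(\theta/2)=|v-v'|/|v-v_*|$, the bounds \eqref{kernelQ} and \eqref{kernelP} produce an effective kernel comparable to $|v-v_*|^{\gamma+2s+1}/|v-v'|^{n+2s}$, yielding schematically
\begin{equation*}
N_g(f)\gtrsim\int_{\threed}dv\int_{\threed}dv'\,\frac{(f(v')-f(v))^2}{|v-v'|^{n+2s}}\int_{E_{v,v'}} g(v_*)\,|v-v_*|^{\gamma+2s+1}\,d\pi(v_*),
\end{equation*}
where $d\pi$ is surface measure on $E_{v,v'}$. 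I then restrict the outer integral to $\{d(v,v')\le 1\}$ and invoke Assumption L to extract a quantitative lower bound on $\int_{E_{v,v'}\cap\ballR} g\,d\pi$ uniformly in the conormal direction $v-v'$; the factor $\Clower^2/|g|_{L^1(\ballR)}$ appearing in $C_1$ arises here through a Chebyshev-type argument that passes from the volume estimate of Assumption L to a surface estimate on the affine slice.

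The final stage matches the isotropic $|v-v'|^{-n-2s}$ with the anisotropic $d(v,v')^{-n-2s}$ and installs the symmetric weight $(\langle v\rangle\langle v'\rangle)^{(\gamma+2s+1)/2}$. The hyperplane identity $2v_*\cdot(v-v')=|v|^2-|v'|^2-|v-v'|^2$ together with $v_*\in\ballR$ controls $||v|^2-|v'|^2|\lesssim_R|v-v'|$ on $\{d(v,v')\le1\}$, so $d(v,v')\approx_R|v-v'|$ on the relevant locus; and the kinetic factor $|v-v_*|^{\gamma+2s+1}$, combined with $\langle v\rangle\approx\langle v'\rangle$ on this region, provides the correct symmetric power after splitting into $|v|\lesssim R$ and $|v|\gtrsim R$. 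The main obstacle is the directional uniformity of the hyperplane lower bound: this is exactly what the tube-avoidance clause in Assumption L is engineered for, and its failure in any single direction of $v-v'$ would destroy the estimate. A related geometric subtlety is that for small $|v-v'|$ the hyperplane $E_{v,v'}$ may lie at distance up to $\sim 1/|v-v'|$ from the origin, so the argument must first reduce to a uniform bounded scale before Assumption L can be applied, and it is here that the exponent $(\gamma+2s+1)/2$---rather than $\gamma/2$---is forced by the dependence of the Carleman kernel on $|v-v_*|$ rather than on $|v-v'|$ alone.
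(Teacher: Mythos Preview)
Your first stage---the reduction of \eqref{lowerSHARP} to \eqref{mainlower} via the cancellation lemma on $K_g(f)$---matches the paper exactly (Section~\ref{sec:ls}).

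The second stage contains a genuine gap. After your Carleman change of variables, $g$ is integrated over the affine \emph{hyperplane} $E_{v,v'}$, and you propose to bound $\int_{E_{v,v'}\cap\ballR} g\,d\pi$ uniformly from below by a ``Chebyshev-type argument that passes from the volume estimate of Assumption~L to a surface estimate on the affine slice.'' No such passage exists. Assumption~L is a \emph{volume} condition, asserting only that $\int_{\ballR\setminus T_\delta} g_*\,dv_* \ge \Clower$ for every tube $T_\delta$; it gives no control on any single codimension-one slice. An $L^1$ function satisfying Assumption~L can vanish identically on any prescribed hyperplane (a null set), and even for $g$ bounded below on a smaller ball, the hyperplane $E_{v,v'}$---which passes through $v'$---may miss the support of $g$ entirely once $|v'|$ is large, yet such pairs $(v,v')$ with $d(v,v')\le 1$ do contribute to \eqref{normDOTdef}. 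You flag the hyperplane's distance from the origin as a subtlety, but the obstruction is more basic: even when $E_{v,v'}\cap\ballR$ is nonempty, its $g$-mass can be zero. The appearance of the particular constant $\Clower^2/|g|_{L^1(\ballR)}$ is itself a hint that a single surface integral is not the right object.

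The paper (Section~\ref{sec:mainCOER}) avoids this by a doubling trick that keeps $g$ in volume variables throughout. After a dyadic decomposition $N_g(f)\gtrsim\sum_k 2^{k(n-1+2s)} I_k(f)$, one multiplies $I_k(f)$ by $\int_{\ballR} g$, obtaining an integral over \emph{two} star-variables $v_*,\bar v_*\in\ballR$ with corresponding $v',\bar v'$ on two distinct spheres through $v$. The elementary inequality $(f'-f)^2+(\bar f'-f)^2\ge\tfrac12(\bar f'-f')^2$ eliminates $f(v)$, and a ``co-plane'' change of variables \eqref{doubleCARL} places $v$ on a codimension-two affine subspace while $v_*,\bar v_*$ remain \emph{volume} variables in $\ballR\times\ballR$. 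Assumption~L is then applied twice---once for each star-variable---to bound $\int_{\ballR}\int_{\ballR} g_*\bar g_*\,\ind_G$ below by $\Clower^2$; the factor $|g|_{L^1(\ballR)}^{-1}$ is the price of the initial multiplication. The anisotropic metric $d(v',\bar v')$ enters because the $(n-2)$-dimensional measure of the relevant set of $v$ is shown (Lemma~\ref{ballprop}) to be bounded below precisely when $v'$ and $\bar v'$ are close in the paraboloid sense.
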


We also have the following upper bound estimate:

\begin{theorem}
\label{mainTHM}  
(Main upper bound)
Under \eqref{assumeAupper} we have the upper bound estimate
\begin{equation}
\left| \ang{\mathcal{Q}(g, f),  h} \right|
\le
C_3 \Cupper \nsm f \nsm_{\nspace}
\nsm h \nsm_{\nspace}.
%\label{upperSHARP}
\notag
\end{equation}
Here, as above, the constant $C_3>0$  is constructive as well.
\end{theorem}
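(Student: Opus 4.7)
The plan is to reduce the trilinear upper bound to two copies of the bilinear form $N_g(\cdot)$ introduced in \eqref{cancellationSPLIT}, and then to show $N_g(\varphi) \lesssim \Cupper \nsm\varphi\nsm_{\nspace}^2$. Starting from the weak form
$$\ang{\mathcal{Q}(g,f),h} = \int_{\threed}\int_{\threed}\int_{\sph} B(v-v_*,\sigma)\, g_* f (h'-h)\,d\sigma\,dv_*\,dv,$$
I will use the algebraic identity $2f(h'-h) = (f'+f)(h'-h) - (f'-f)(h'-h)$ to split the trilinear form into a difference--difference part $T_1 \eqdef -\tfrac12\int B g_*(f'-f)(h'-h)$ and a cancellation part $T_2 \eqdef \tfrac12\int B g_*(f'+f)(h'-h)$. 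Cauchy--Schwarz against the positive measure $B g_*\,d\sigma\,dv_*\,dv$ then immediately yields $|T_1| \le N_g(f)^{1/2} N_g(h)^{1/2}$.

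The central technical step will be to prove
$$N_g(\varphi) \le C\, \Cupper \left(\nsm\varphi\nsm_{\dot{N}^{s,\gamma}}^2 + \nsm\varphi\nsm_{L^2_{\gamma+2s}}^2\right).$$
For each fixed $(v,v_*)$ I will replace $\sigma \in \sph$ by the image $v'$, which traces out the sphere of radius $|v-v_*|/2$ centered at $(v+v_*)/2$. Computing the Jacobian and substituting \eqref{kernelQ}--\eqref{kernelP}, the $\sigma$-integral converts into a nonlocal integral in $v'$ whose kernel reproduces the singularity $|v-v'|^{-n-2s}$ appearing in \eqref{normDOTdef}. The sphere constraint forces $v'-v$ to have a nontrivial component along the sphere's normal direction, and this is precisely what makes $|v|^2-|v'|^2 = (v-v')\cdot(v+v')$ nonzero; a short geometric calculation then identifies $d(v,v')^2$ with the intrinsic squared distance along this sphere. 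After integrating out $v_*$ against $g_*$ via Assumption U, i.e.\ $\int|v-v_*|^a\ang{v_*}^i g_*\,dv_* \le \Cupper\ang{v}^a$ for $a\in[\gamma,\gamma+2s]$, the resulting weight on $(v,v')$ matches the factor $(\ang{v}\ang{v'})^{(\gamma+2s+1)/2}$ in the regime $d(v,v')\le 1$, while the complementary region $d(v,v')>1$ is absorbed into $\Cupper\nsm\varphi\nsm_{L^2_{\gamma+2s}}^2$.

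For the cancellation piece $T_2$, I will apply the cancellation lemma of \cite{MR1765272}: after a pre-post change of variables on the $f'h'$ portion, the angular singularity may be integrated against $\sigma$ first and leaves a non-singular kernel in $v_*$, so that $|T_2| \lesssim \Cupper \nsm f\nsm_{L^2_{\gamma+2s}}\nsm h\nsm_{L^2_{\gamma+2s}}$, which is absorbed by $\nsm f\nsm_{\nspace}\nsm h\nsm_{\nspace}$. The main obstacle will be the comparison step in the previous paragraph: one must verify that the identification of $d(v,v')$ with the intrinsic collisional metric holds uniformly across the full physical range $\gamma > -n$ and $s\in(0,1)$, and that the quadratic term $(|v|^2-|v'|^2)^2/4$ is neither negligible nor artificially dominant in any asymptotic regime of $(v,v_*)$, especially when $|v-v_*|$ is much larger or much smaller than $\ang{v}$.
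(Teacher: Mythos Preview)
Your Cauchy--Schwarz bound $|T_1|\le N_g(f)^{1/2}N_g(h)^{1/2}$ is fine, but the treatment of $T_2$ has a genuine gap. Writing $(f'+f)(h'-h)=(f'h'-fh)+(fh'-f'h)$, only the first piece $\tfrac12\int Bg_*[(fh)'-fh]$ is a cancellation-lemma term. The remainder
\[
A(f,h)\eqdef \tfrac12\int Bg_*(fh'-f'h)=\tfrac12\bigl[\ang{\mathcal{Q}(g,f),h}-\ang{\mathcal{Q}(g,h),f}\bigr]
\]
is the skew part of $\collOPd$, and it is \emph{not} bounded by weighted $L^2$ norms under Assumption~U alone. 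The cancellation lemma cannot be applied: since $f'$ depends on $\sigma$ it does not factor out of the angular integral, and rewriting via pre-post as $A(f,h)=\tfrac14\int B(g_*-g_*')(fh'-f'h)$ only trades this for a difference on $g$, which a pure moment condition like \eqref{assumeAupper} does not control. A secondary difficulty is your direct upper bound $N_g(\varphi)\lesssim \Cupper|\varphi|_{\nspace}^2$: after the change $(v_*,\sigma)\to(v',v_*)$ you are left, for each fixed $(v,v')$, with a weighted integral of $g$ over the \emph{hyperplane} $\{v_*:\ang{v'-v,v'-v_*}=0\}$, and Assumption~U does not bound such Radon-type traces pointwise.

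The paper's proof is structurally different and sidesteps both problems. It never isolates symmetric and antisymmetric parts; instead it expands $f=\sum_j Q_jf$ and $h=\sum_j Q_jh$ in anisotropic Littlewood--Paley projections adapted to the paraboloid, and simultaneously decomposes $B=\sum_k B_k$ dyadically in $|v-v'|$. For each frequency pair $(j,j+l)$ and each scale $k$ it applies either a ``trivial'' size estimate ($\lesssim 2^{2sk}$, \S\ref{sec:trivial}) when $k\le j$, or a paraboloid Taylor-expansion cancellation estimate ($\lesssim 2^{(2s-i)k}$ with $2s-i<0$, Propositions~\ref{cancelFprop}--\ref{cancelHprop}) when $k>j$; the latter is run through either $\ang{f,\collOPd h}$ or its Carleman dual $\ang{\ScollOPd f,h}$, so the $i$ derivatives always land on the factor of \emph{lower} Littlewood--Paley frequency. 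The triple sum then closes via Cauchy--Schwarz and the square-function bound \eqref{lpsobolev0short}, and every appearance of $g$ is through a full-space moment as in \eqref{assumeAupper}. Your $T_1/T_2$ split cannot allocate derivatives in this frequency-dependent way.
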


Collecting the estimates in Theorem \ref{mainLOWERthm} and Theorem \ref{mainTHM} we obtain
$$
C_1 \nsm f \nsm_{\dot{N}^{s,\gamma}}^2 
\le
 - \ang{ f, \collOPd f }
 +
  C_2 \Cupper \nsm f\nsm^2_{L^2_{\gamma}}
\le \left( C_2+C_3 \right) 
\Cupper \nsm f \nsm_{\nspace}^2.
$$
These theorems thus show that the sharp global diffusive behavior of $\collOPd$ in $L^2$ is that of the geometric fractional semi-norm \eqref{normDOTdef}, up to terms which do not differentiate.  
Furthermore we can coercively control the entropy production functional \eqref{entropyPRODf}.

 \begin{theorem}\label{entTHM}
(Entropy production)
Given \eqref{assumeAupper} and \eqref{assumeAlower}, we have the constructive lower bound
$$
D(g,f) \ge
C_1 
\nsm \sqrt{f} \nsm_{\dot{N}^{s,\gamma}}^2 - C_2 \Cupper \nsm f\nsm_{L^1_{\gamma}}.
$$
The constants $C_1$, $C_2$ are same as those appearing in \eqref{lowerSHARP}. 
\end{theorem}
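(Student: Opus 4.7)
The plan is to reduce the theorem to Theorem \ref{mainLOWERthm} applied to $\sqrt{f}$ by exploiting the elementary inequality
$$ a \log(a/b) \,\ge\, (a-b) + (\sqrt{a}-\sqrt{b})^2, \qquad a,b \ge 0, $$
(equivalently, $x\log x - x + 1 \ge (\sqrt{x}-1)^2$), combined with the pre-post collisional change of variables and the cancellation lemma discussed in Section \ref{sec:ls}.

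First, starting from \eqref{entropyPRODf} and \eqref{BoltzCOL}, I would apply the pre-post change of variables $(v,v_*,\sigma) \leftrightarrow (v',v'_*,k)$ (unit Jacobian) in the $g'_* f' \log f$ piece to rewrite
$$ D(g,f) \,=\, \int_{\threed} dv \int_{\threed} dv_* \int_{\sph} d\sigma \ B(v-v_*,\sigma)\, g_* f \log(f/f'). $$
Applying the elementary inequality above pointwise inside the integral yields the decomposition
$$ D(g,f) \,\ge\, \int\!\!\int\!\!\int B\, g_* (f - f')\,dv\,dv_*\,d\sigma \,+\, \int\!\!\int\!\!\int B\, g_* (\sqrt{f} - \sqrt{f'})^2\,dv\,dv_*\,d\sigma. $$

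The second term is, by definition \eqref{cancellationSPLIT}, precisely $2 N_g(\sqrt{f})$. Since Assumption L is in force, Theorem \ref{mainLOWERthm} applied with $\sqrt{f}$ in place of $f$ gives
$$ 2 N_g(\sqrt{f}) \,\ge\, 2 C_1 \nsm \sqrt{f} \nsm_{\dot{N}^{s,\gamma}}^2, $$
which supplies the desired coercive diffusive piece (the factor of $2$ being harmlessly absorbed into $C_1$). For the first term, I would invoke the cancellation lemma to rewrite the triple integral as $\int f(v)\, (g \ast S)(v)\, dv$ for a convolution kernel $S$ satisfying $|S(z)| \lesssim |z|^\gamma$ (this is the standard output of the cancellation lemma given the kinetic factor \eqref{kernelP}). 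Then Assumption U at the endpoint $a=\gamma$ in \eqref{assumeAupper} yields $|(g\ast S)(v)| \le C\,\Cupper\,\ang{v}^\gamma$, so this term contributes at most $C\,\Cupper\,\nsm f\nsm_{L^1_\gamma}$ in absolute value. Combining the two estimates gives the stated inequality.

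The main technical obstacle, beyond correctly tracking the output of the cancellation lemma so that the resulting weight matches $L^1_\gamma$, is the justification of the formal manipulations when $f$ is allowed to vanish or be very small, so that $\log f$ is singular. This is handled by noting that the pointwise inequality $a \log(a/b) \ge (a-b) + (\sqrt{a}-\sqrt{b})^2$ extends to all $a,b \ge 0$ under the convention $0 \log 0 = 0$, so a standard approximation $f \mapsto f + \varepsilon$ followed by monotone/dominated convergence (using that both integrands on the right have fixed signs up to the cancellation lemma remainder) suffices to pass to the limit.
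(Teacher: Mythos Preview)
Your proposal is correct and follows essentially the same route as the paper: rewrite $D(g,f)$ via the pre-post change of variables, apply the pointwise inequality $a\log(a/b)-a+b\ge(\sqrt a-\sqrt b)^2$ to isolate $2N_g(\sqrt f)$, invoke \eqref{mainlower} from Theorem~\ref{mainLOWERthm} for the coercive piece, and control the remaining $\int B\,g_*(f-f')$ term by the cancellation lemma together with \eqref{assumeAupper} at $a=\gamma$ to obtain the $L^1_\gamma$ error. The only (cosmetic) difference is that the paper names the two pieces $S(g,f)$ and $T(g,f)$ before splitting, and does not spell out the $f\mapsto f+\varepsilon$ approximation you mention.
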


Each of the above anisotropic fractional derivative estimates improves upon previously-known estimates, which were formulated in various local and global isotropic Sobolev spaces  such as those found in
\cite{MR2284553,MR2052786,arXiv:0909.1229v1,MR1765272,krmReview2009,
MR1715411,MR1649477}.  More discussion may be found in Section \ref{sec:history}.

In what follows, we use the notation $A \lesssim B$ to mean that  there exists a finite, positive constant $C$ such that $A \leq C B$ holds uniformly over the functions and summation indices which are present in the inequality (and that the precise magnitude of the constant is unimportant).  The notation $B \gtrsim A$ is equivalent to $A \lesssim B$, and $A \approx B$ means that both $A \lesssim B$ and $B \lesssim A$.

\subsection{Historical remarks}  \label{sec:history}
Cercignani \cite[p.85]{MR0255199} in 1969 (about forty years ago) 
noticed
that the linearized Boltzmann collision operator $L$  in the case of the Maxwell molecules collision kernel $(p=5)$ behaves like a fractional diffusive operator; more precisely, in \cite{MR0255199} it was noticed that the eigenvalues of $L$ grow like $-m^{1/4}$  as $m\to\infty$.  
Over time, this point of view transformed into the following widespread heuristic conjecture on the diffusive behavior of the Boltzmann collision operator \eqref{BoltzCOL}:
$$
f \mapsto \mathcal{Q}(g,f) \sim -(-\Delta_v)^s f + \mbox{lower order terms.}
$$
Note that $(-\Delta_v)^s$ is a flat fractional Laplacian.
See for example \cite{MR1942465,arXiv:0909.1229v1,MR1765272}.
This point of view is well-known to be correct locally \cite{MR1765272} in, for instance, a ball $\ballR\subset \threed$ with $0<R<\infty$.  Our main motivation is to provide a  global, sharp anisotropic correction to this heuristic conjecture 
in terms of energy estimates for the trilinear energy under \eqref{assumeAupper} and \eqref{assumeAlower}.  
We furthermore believe that the information provided by these new estimates in Theorems \ref{mainLOWERthm}, \ref{mainTHM}, and \ref{entTHM} will be quite useful to future work in a wide variety of contexts within the Boltzmann theory.

We also point out that the best possible comparisons of the space $\nsm \cdot \nsm_{\nspace}$ to the weighed isotropic Sobolev spaces
$H^s_\ell (\threed)$ are established by the inequalities
\begin{equation}
\nsm h \nsm_{L^2_{\gamma+2s}(\threed)}^2
+
\nsm h \nsm_{H^s_{\gamma}(\threed)}^2
\lesssim 
\nsm h\nsm_{\nspace}^2 
\lesssim \nsm h \nsm_{H^s_{\gamma+2s}(\threed)}^2.
\label{comareISOn}
\end{equation}
Here 
$
H^s_{\ell}(\threed)
\eqdef
\{ f \in L^2_\ell (\threed) : \nsm f \nsm_{H^s_{\ell}(\threed)}^2 
\eqdef \int_{\threed} dv \ang{v}^\ell \left| (I - \Delta_v)^{s/2} f(v) \right|^2 <\infty
\}
$
is the standard isotropic fractional Sobolev space.
These inequalities can be easily established as in \cite[Eq. (2.15)]{gsNonCutJAMS}.

Historically, anisotropic behavior in the linearized context was noticed as early as Pao \cite{MR0636407} in 1974, where he studied the symbol of the Fourier transform of the linearized collision operator.  Indeed, a key difficulty in the analysis of \cite{MR0636407} was the presence of the cross product of the frequency (derivative) variable and the velocity (weight) variable; Pao's pseduodifferential operator approach required an intricate knowledge of Bessel functions.  
Further studies of the Fourier transform of the Boltzmann collision operator were given,  for example, in  
\cite{MR1763526,MR1128328,MR1765272,MR2052786} and the references therein.  The sharp anisotropic differentiation effects for the Dirichlet form of the linearized Landau collision operator have been studied, for example, by Guo \cite{MR1946444} in 2002, 
and Mouhot-Strain \cite{MR2322149} in 2007.  The Landau operator involves whole derivatives rather than non-local, geometric fractional derivatives.   It is worth discussing briefly the terms ``anisotropic'' and ``isotropic'' because, for example,
 Pao \cite{MR0636407} uses the term ``isotropic'' to describe the effects that we are now calling ``anisotropic.''  
Additionally, the terms anisotropic or non-isotropic are sometimes used loosely to mean there is a gain of weights or a gain of regularity in the relevant estimates, which does also occur in some isotropic situations.   
 We use the term anisotropic in this paper to mean that there is an essential coupling between the non-local ``direction of differentiation'' and the power of the velocity weight at infinity (meaning, for example, that the spaces $H^s_{\gamma}$ and $H^s_{\gamma+2s}$ appearing in \eqref{comareISOn} are sharp but not equal to one another).
This strict notion of anisotropy can be observed in the Boltzmann theory using delicate calculations involving the Fourier transform; see, for example, \cite{MR0636407,MR1763526}.   
It is worth noting that a large number of alternate formulas can be derived for the Fourier transform of the Boltzmann collision operator.
However since the Fourier transform is a 
Euclidean invariant object, 
 it seems to be difficult to use the Fourier transform point-of-view 
 %alone 
 to prove sharp anisotropic energy estimates in the presence of essentially non-Euclidean geometries.

In this paper, we treat the Boltzmann collision operator as a fractional, geometric Laplacian with the geometry of a ``lifted'' paraboloid in $\R^\last$ which was introduced in our previous work \cite{gsNonCutJAMS,gsNonCutA}.  In particular, we do not use the Fourier transform at all.  
With this point of view,  we can apply a generalization of Littlewood-Paley theory, as in Section \ref{sec:upTRI}, to prove our sharp anisotropic upper bound estimates in Theorem \ref{mainTHM}.   Our Littlewood-Paley projections, developed in \cite{gsNonCutJAMS}, are inspired by
the Littlewood-Paley-Stein theory
 of Stein \cite{MR0252961},
the geometric Littlewood-Paley theory of
Klainerman-Rodnianski \cite{MR2221254}, and the original physics representations of \eqref{BoltzCOL} in terms of delta functions on the collisional conservation laws. %and on the lifted paraboloid.

Recently this point of view has played a crucial role in our proof  \cite{gsNonCutJAMS,gsNonCutA,sNonCutOp} of global existence of unique, non-negative,  near-Maxwellian classical solutions to the Boltzmann equation (1872) which exhibit rapid convergence to equilibrium. These results cover the full range of physical collision kernels derived by Maxwell in 1866 from an inverse power intermolecular potential, and they
additionally resolve a conjecture from \cite{MR2322149}.
Notice also \cite{newNonCutAMUXY} for other related results.

The sharpest previously-known estimates for the collision operator are expressed in terms of isotropic Sobolev spaces and correspond to the inequalities
\begin{eqnarray*}
\left| \ang{\mathcal{Q}(g, f),  h} \right|
& \lesssim &
\nsm g \nsm_{L^1_{(\gamma+2s)^+}(\R^3)}
\nsm f \nsm_{H^s_{(\gamma+2s)^+}(\R^3)}
\nsm h \nsm_{H^s%_{(\gamma+2s)^+}
(\R^3)},
\\
- \ang{\mathcal{Q}(g, f),f}  & \geq  & C(g) \nsm f \nsm_{H^s_{\gamma}(\R^3)}^2 - C 
\nsm g \nsm_{L^1_{\max\{\gamma^+, 2- \gamma^+\}}(\R^3)}\nsm f\nsm^2_{L^2_{\gamma^+}(\R^3)} .
\end{eqnarray*}
Here $(\gamma+2s)^+ \eqdef \max\{0, \gamma+2s\}$.
For estimates of this type, see, for example, \cite{MR1715411,MR2284553,MR2052786,arXiv:0909.1229v1,MR1765272,MR1942465}  and the references therein; in particular, these (and other) estimates in terms of isotropic Sobolev spaces are stated in \cite[Theorem 2.1, Theorem 2.6]{arXiv:0909.1229v1}.  Here $C(g)>0$ depends upon the $L^1_{\max\{\gamma^+, 2- \gamma^+\}}(\R^3)$
and $L \log L(\R^3)$ norms of $g$ and also the collision kernel $B$.
These bounds are proved in \cite{arXiv:0909.1229v1} when $\Phi$ in \eqref{kernelP} is replaced by the regularized kinetic factor 
$\tilde{\Phi}(|z|) \eqdef C_{\tilde{\Phi}} \ang{z}^\gamma$ which removes the analytical difficulties at zero and infinity. 
Notice that the weights appearing in these isotropic upper and lower bounds correspond to the best isotropic approximations of $\nspace$ from above and below, as indicated by \eqref{comareISOn}.
Our estimates  in Theorems \ref{mainLOWERthm} and \ref{mainTHM}
then improve upon these isotropic estimates in three ways.
Firstly, we include the physical kinetic factors from \eqref{kernelQ} and \eqref{kernelP}.
Secondly, we include the subtle but important sharp, anisotropic fractional diffusive effects in both the upper and the lower bound.
And third, we only need the local information on $g$ such as that in \eqref{assumeAlower} to prove the lower bounds.

Regarding Theorem \ref{entTHM}, many works study entropy production estimates in the non cut-off regime, as in, for example, \cite{MR1649477,MR1765272,MR1715411,MR2052786}. 
These estimates have found widespread utility for both the spatially homogeneous and inhomogeneous Boltzmann equation, see e.g. \cite{MR1942465,MR1765272,MR2052786}.
The most widely-used estimate from \cite{MR1765272} (which is sharp in comparison to ours locally) is  the following:
$$
D(g,f) \ge C_{g,R} ~ |\sqrt{f}|_{\dot{H}^{s}(B_R)}^2 - \mbox{lower order terms.}
$$
The lower order terms above can be found in \cite[Corollary 2]{MR1765272}.  
%Note also that isotropic global estimates can be found in \cite{MR2052786} when $\gamma=0$ in \eqref{kernelP}.
Notice that the anisotropic effects of \eqref{normDOTdef} are only present near infinity; our estimate in  Theorem \ref{entTHM} thus implies this local estimate under more general conditions on $g$, as in \eqref{assumeAlower}, than those used in \cite{MR1765272}.
Theorem \ref{entTHM} is, moreover, a stronger anisotropic and global version of this local smoothing estimate \cite{MR1765272} (stronger, that is, in terms of the global anisotropic weight  coupled to the fractional differentiation).   We would like to remark that after the submission of the present results, we learned that some isotropic trilinear coercivity estimates were also obtained by Chen and He in 
\cite{ChenHeSmoothing,ChenHeSmoothing2}.

Due to length constraints, it is difficult to provide an exhaustive set of references.  However we refer to further references in our earlier articles \cite{gsNonCutJAMS,gsNonCutA} and the reviews \cite{MR1942465,MR1307620} for more historical discussions of previous results.

\subsection{Outline of the article.}  The remainder of this article is organized as follows.
In Section \ref{sec:ls} we reformulate the Boltzmann collision operator \eqref{BoltzCOL}, using \eqref{cancellationSPLIT} to reduce \eqref{lowerSHARP} to \eqref{mainlower}; we also prove Corollary L.  Then in Section \ref{physicalDECrel} we begin to prove Theorem \ref{mainTHM}; we perform the anisotropic dyadic decomposition of the singularity and subsequently prove the  ``trivial'' estimates as well as the cancellation estimates for the decomposed pieces.  Following that, in Section \ref{sec:upTRI} we describe the generalized anisotropic Littlewood-Paley projections which were developed in our earlier works \cite{gsNonCutA,gsNonCutJAMS}.  We then perform the  triple sum estimates for the trilinear form \eqref{mainexpand}, which completes the proof of Theorem \ref{mainTHM}.
Section \ref{sec:mainCOER} is devoted to the proof of  \eqref{mainlower}.  The methods used here are distinct from those appearing in our earlier works \cite{gsNonCutA,gsNonCutJAMS}.  In particular, we introduce new, non-oscillatory methods to derive a new inequality for $N_g(f)$ which reduces the problem to the study of a family of convolution-like estimates.  The key idea is to observe that $N_g(f)$ and the semi-norm $\dot N^{s,\gamma}$ are already in a functionally similar form, but that the support of the integral appearing in $N_g(f)$ is singular when compared to that of the semi-norm.  By means of the convolution-like operation, we are able to 
``smear-out'' the support of integration in $N_g(f)$ from \eqref{cancellationSPLIT} and thereby make a direct, pointwise comparison.  Then in Section \ref{sec:entropy} we prove Theorem \ref{entTHM} using \eqref{mainlower} and a well-known splitting from, for instance, \cite{MR1715411,MR1765272}.
Lastly, in the Appendix we derive the ``dual formulation'' $\ang{\ScollOPd f,  h }$ for the trilinear form using a Carleman-type representation and methods from \cite[Appendix]{gsNonCutJAMS}.  We will also derive a change of variables that we call the ``co-plane identity''  which is used in Section \ref{sec:regul}.

\subsection{The lower bound estimate \eqref{lowerSHARP} and Corollary L}  \label{sec:ls}

From \eqref{cancellationSPLIT}
and the cancellation lemma \cite[Lemma 1]{MR1765272} we quickly have the identity
$$
K_g(f) = \int_{\threed}  dv_* ~ g_* ~(f^2 * S)(v_*)
=
C'   \int_{\threed}  dv ~f^2  ~ \int_{\threed}  dv_* ~ g_*~ |v-v_*|^\gamma \ge 0,
$$
where $(f^2 * S)$ denotes convolution.
In the formula above, 
\begin{equation}
S(z) \eqdef  \frac{c_n C_\Phi}{2}~ |z|^\gamma \int_0^{\pi/2} d\theta ~ \sin^{n-2} \theta ~ b(\cos\theta) \left[ \cos^{-(\gamma+n)} \frac{\theta}{2} -1\right], 
 \label{est:KGest}
\end{equation}
where now 
$
C' \eqdef \frac{c_n C_\Phi}{2} \int_0^{\pi/2} d\theta ~ \sin^{n-2} \theta ~ b(\cos\theta) \left[ \cos^{-(\gamma+n)} \frac{\theta}{2} -1\right]>0.
$ 
Note that $C'$ is finite by virtue of \eqref{kernelQ},
where $c_n>0$ is a dimensional constant.
%Note that $c_n$ is the volume of $\sph$.   
By \eqref{assumeAupper} it follows immediately that
$
 K_g(f)  \le  C'  \Cupper \nsm f \nsm_{L^2_{\gamma}}^2.
$
Thus the term   $N_g(f)$ must contain all of the global, geometric fractional differentiation effects.
Subject to \eqref{mainlower}, then, we have shown \eqref{lowerSHARP} from Theorem \ref{mainLOWERthm}.
We will prove \eqref{mainlower} in Section \ref{sec:mainCOER}.

%In particular with the semi-norm from \eqref{normDOTdef} we have the estimate:

%\begin{proposition}\label{NlowerB} 
%$\exists  C, c_{n,\delta} >0$ such that
%$
%N_g(f) \ge
%c_{n,\delta} \Clower\left(\nsm f\nsm_{\dot{N}^{s,\gamma}}^2 
%- 
%C \nsm f \nsm_{L^2_{\gamma + 2s}}^2\right).
%$
%\end{proposition}
%
%Above $\delta >0$ is from \eqref{assumeAlower}.

Now we will give the proof of Corollary L:

\begin{proof}[Proof of Corollary L]
Fix $R>0$ and let $T_\delta$ be any tube of radius $\delta > 0$.  By Jensen's inequality, we obtain
\[ \left( \frac{\int_{\ballR \cap T_\delta} dv_*  g_*}{|\ballR \cap T_\delta|}  \right) \ln \left( e + \frac{\int_{\ballR \cap T_\delta} dv_*  g_*}{|\ballR \cap T_\delta|} \right) \leq \frac{1}{|\ballR \cap T_\delta|} \int_{\ballR \cap T_\delta} dv_*  g_* \ln (e + g_*), \]
meaning that
\[ 
\left( \frac{\int_{\ballR \cap T_\delta} dv_* ~ g_*}{ \int_{\ballR} dv_* ~  g_* \ln (e + g_*)}  \right) 
\ln \left( e + \frac{\int_{\ballR} dv_*  g_* \ln (e + g_*)}{|\ballR \cap T_\delta|} 
\frac{\int_{\ballR \cap T_\delta} dv_*  g_*}{ \int_{\ballR} dv_* ~  g_* \ln (e + g_*)} \right) \leq 1.
\]
If $|\ballR \cap T_\delta|$ is sufficiently small relative to  $\int_{\ballR} dv_* ~ g_* \ln( e + g_*)$, then this inequality forces $\int_{\ballR \cap T_{\delta}} g_*$ to be arbitrarily small with respect to $\int_{\ballR} dv_* ~ g_* \ln( e + g_*)$.  
In particular, for small enough $\delta$ (keeping $R$ fixed), one can ensure that no more than half of the mass of $g_*$ on the ball of radius $R$ is contained in any tube $T_\delta$:
\[ 
\int_{B_R \setminus T_\delta} dv_* ~ g_* \geq \frac{1}{2} \int_{B_R} dv_* ~ g_*, 
\]
for any tube $T_\delta$ of sufficiently small radius.
\end{proof}

\section{Physical decomposition and individual estimates}
\label{physicalDECrel}

In what follows we prove all of our estimates for functions in the Schwartz space, $\mathcal{S}(\threed)$, which is the well-known space of real-valued $C^{\infty}(\threed)$ functions all of whose derivatives decay at infinity faster than the reciprocal of any polynomial.   
The Schwartz functions are dense in the anisotropic space $\nspace$;  the proof of this fact is easily reduced to the analogous one for Euclidean Sobolev spaces by means of the partition of unity as constructed in \cite[Section 7.3]{gsNonCutJAMS}.  
Moreover, in our estimates, the constants  will not rely on the 
regularity of the functions that we are estimating.  Thus, using standard density arguments, our estimates apply to any function in $\nspace$ or whatever the function space happens to be for a given estimate.  

We will now introduce the anisotropic dyadic decomposition of the singularity \eqref{kernelQ} in Section \ref{sec:diad} and all of the decomposed pieces of the relevant trilinear energy.   Then in Section \ref{sec:trivial} we perform the ``trivial'' estimates of the individual decomposed pieces, and in Section \ref{sec:SSC} we present the cancellation estimates.

\subsection{Dyadic decomposition of the singularity}\label{sec:diad}

Let $\{ \chi_k \}_{k=-\infty}^\infty$ be a partition of unity on $(0,\infty)$ such that $\nsm \chi_k\nsm_{L^\infty} \leq 1$ and 
$\mbox{supp}\left( \chi_k \right)\subset [2^{-k-1},2^{-k}]$.  
For each $k$:
\[
B_k = B_k(v-v_*,\sigma) \eqdef \Phi(|v-v_*|) ~b \left( \left< \frac{v-v_*}{|v-v_*|}, \sigma \right> \right) \chi_k (|v - v'|). \]
Notice that we have the expansion
\begin{equation}
|v-v'|^2 = \frac{|v-v_*|^2}{2} \left( 1 - \left< \frac{v-v_*}{|v-v_*|}, \sigma \right>  \right)
= |v-v_*|^2 \sin^2 \frac{\theta}{2}.
\label{sinEXPest}
\end{equation}
%Thus the condition $|v-v'| \approx 2^{-k}$ is equivalent to  $\ang{\frac{v-v_*}{|v-v_*|},\sigma} \approx 2^{-k} |v-v_*|^{-1}$.  
Therefore the condition $|v-v'| \approx 2^{-k}$ is equivalent to the condition that the angle between $\sigma$ and $\frac{v-v_*}{|v-v_*|}$ is comparable to $2^{-k} |v-v_*|^{-1}$.  
With this partition, we define
\begin{equation}
\label{cutoffOP}
\begin{split}  
\opGplus(f,h)  
& \eqdef 
\int_{\threed} dv \int_{\threed} dv_* \int_{\sph} d \sigma ~ B_k(v-v_*, \sigma) ~ g_* f  h',   
\\ 
\opGminus(f,h)  
& \eqdef 
\int_{\threed} dv \int_{\threed} dv_* \int_{\sph} d \sigma ~ B_k(v-v_*, \sigma) ~ g_* f  h.  
\end{split}  
\end{equation}
Notice that both $\opGplus$ and $\opGminus$ depend on $g$ in spite of the fact that the notation used here suppresses this dependence. %the dependence on $g$ in these operators is implicit.

We will also express the collision operator \eqref{BoltzCOL} using the dual representation, denoted by 
$\ScollOPd$.  This is derived via a Carleman-type representation in the Appendix using methods from  \cite[Appendix]{gsNonCutJAMS}.  
With \eqref{3dualZ} in the Appendix we can see that
\begin{equation}
\ang{ f, \collOPd h }
=
\ang{\ScollOPd f,  h }
\eqdef
\int_{\threed} dv'   \int_{\threed} dv_* \int_{E_{v_*}^{v'}} d \pi_{v}  
~\tilde{B} ~ g_* ~ h' \left( f - f' \right)
+
\opGstarS(f,h),
\label{dualOPdef}
\end{equation}
%This is what we call the ``dual representation''; it has the kernel given by
where the kernel $\tilde{B}$ is given by
\begin{equation}
\tilde{B}
\eqdef
2^{n-1}
\frac{B\left(v-v_*, \frac{2v' - v- v_*}{|2v' - v- v_*|}\right) }{  |v'-v_*| ~ |v-v_*|^{n-2}}
\label{kernelTILDE}
\end{equation}
and the operator $\opGstarS$ above  does not differentiate at all:
\begin{equation}
\opGstarS(f,h)  
 \eqdef 
 \int_{\threed} dv' ~ f'  h'   \int_{\threed} dv_*  ~g_*\int_{E_{v_*}^{v'}} d \pi_{v}  ~\tilde{B} 
\left(
 \frac{ \Phi(v'-v_*) |v'-v_*|^{n} }{\Phi(v-v_*) |v-v_*|^{n}}   
 -1
 \right).
 \label{opGlabel}
\end{equation}
We decompose this dual formulation as follows:
\begin{equation}
\label{defTKLcarl}
\begin{split}  
\opGplus(f,h)  
& =
\int_{\threed} dv'   \int_{\threed} dv_* \int_{E_{v_*}^{v'}} d \pi_{v}  
~\tilde{B}_k ~ g_*  h'  f, 
\\ 
\opGstar(f,h)  
& \eqdef 
 \int_{\threed} dv'   \int_{\threed} dv_* \int_{E_{v_*}^{v'}} d \pi_{v}  ~\tilde{B}_k ~ 
  g_* h'  f',
 %%  \\
%%   \opGstarKS(f,h)  
%% & \eqdef 
%%  \int_{\threed} dv'    \int_{\threed} dv_*  \int_{E_{v_*}^{v'}} d \pi_{v}  ~\tilde{B}_k 
%% \left(
%%  \frac{ \Phi(v'-v_*) |v'-v_*|^{n} }{\Phi(v-v_*) |v-v_*|^{n}}   
%%  -1
%%  \right)  ~ g_* h'  f'.
\end{split}  
\end{equation}
where we use the notation 
$$
\tilde{B}_k
\eqdef
2^{n-1}
\frac{B\left(v-v_*, \frac{2v' - v- v_*}{|2v' - v- v_*|}\right) }{  |v'-v_*| ~ |v-v_*|^{n-2}} \chi_k (|v - v'|).
$$
%where $B_k  = B_k(v-v_*, 2v' - v- v_*)$. 
%is given by
%$$
%B_k
%\eqdef
%\Phi(|v-v_*|)b \left( \ang{\frac{v-v_*}{|v-v_*|}, \frac{2v' - v - v_*}{|2v' - v - v_*|} } \right)\chi_k (|v - v'|).
%$$
In the integrals \eqref{dualOPdef}, \eqref{opGlabel}, and \eqref{defTKLcarl}, $d\pi_{v}$ is the Lebesgue measure on the $(n-1)$-dimensional plane $E_{v_*}^{v'}$ passing through $v'$ with normal $v' - v_*$, and $v$ is the variable of integration (i.e.,  $E_{v_*}^{v'}$ contains all $v\in \threed$ such that $\ang{v' - v, v' - v_*}=0$). 
Notice the partition of unity guarantees that the kernel is locally integrable for any $k$.

When $f, g, h \in \mathcal{S}(\threed)$, the pre-post collisional change of variables and the dual representation \eqref{dualOPdef}, \eqref{3dualZ} yield the identities
\begin{align*}
 \left<  f, \collOPd h \right>
 =
  \ang{\mathcal{Q}(g,f),  h }
   & = 
\sum_{k=-\infty}^\infty  \left\{ \opGplus(f,h)  - \opGminus(f,h)   \right\}
\\
& = 
\opGstarS(f,h)
+
\sum_{k=-\infty}^\infty  \left\{ \opGplus(f,h)   - \opGstar(f,h)  \right\}. 
\end{align*}
These will be the general quantities that we estimate in the following sections.

\subsection{``Trivial'' estimates of the decomposed pieces} \label{sec:trivial}
The next step is to estimate each of  $\opGplus$, $\opGminus$, $\opGstar$ and $\opGstarS$ using only the known constraints on the size and support of $B_k$ and $\tilde{B}_k$.  
This is what we will do now.

\begin{proposition}
\label{prop11}
Under the assumption \eqref{assumeAupper}, for any integer $k$, 
we have the uniform estimate
\begin{equation}
\left| \opGminus(f,h)   \right|  \lesssim \Cupper ~ 2^{2sk} ~ \nsm f\nsm_{L^2_{\gamma+2s}}  
\nsm h\nsm_{L^2_{\gamma+2s}}. 
\label{tminussmall}
\end{equation}
\end{proposition}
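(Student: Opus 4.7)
The plan is to bound the scalar trilinear integral by integrating out $\sigma$ first, then $v_*$ against $g$, and finally applying Cauchy–Schwarz in $v$. All the work is concentrated in the angular integral; the rest is bookkeeping.

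First I would compute the angular integral
\[
I_k(v-v_*) \eqdef \int_{\sph} d\sigma ~ B_k(v-v_*,\sigma) = C_\Phi|v-v_*|^\gamma \, c_n \int_0^{\pi/2} d\theta \, \sin^{n-2}\theta \, b(\cos\theta) \, \chi_k(|v-v_*|\sin\tfrac{\theta}{2}),
\]
using \eqref{sinEXPest} to recast the $\chi_k$ cutoff. Since $\mbox{supp}(\chi_k) \subset [2^{-k-1}, 2^{-k}]$, the cutoff localizes $\theta$ to an interval on which $\sin(\theta/2) \approx 2^{-k}/|v-v_*|$, of length $O(2^{-k}/|v-v_*|)$. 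This interval is nonempty only when $|v-v_*| \gtrsim 2^{-k}$ (so that $\theta \leq \pi/2$); otherwise the integrand is identically zero and there is nothing to prove. On the admissible range, the upper bound in \eqref{kernelQ} gives $\sin^{n-2}\theta\, b(\cos\theta) \lesssim \theta^{-1-2s} \approx (2^{-k}/|v-v_*|)^{-1-2s}$. Multiplying this pointwise bound by the measure of the support yields
\[
I_k(v-v_*) \lesssim |v-v_*|^\gamma \cdot \left(\frac{2^{-k}}{|v-v_*|}\right)^{-1-2s} \cdot \frac{2^{-k}}{|v-v_*|} = 2^{2sk} \, |v-v_*|^{\gamma+2s}.
\]

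Next I would integrate in $v_*$. Taking absolute values inside $\opGminus$ and using the pointwise bound on $I_k$,
\[
|\opGminus(f,h)| \leq \int_{\threed} dv \, |f(v)||h(v)| \int_{\threed} dv_* \, I_k(v-v_*) |g(v_*)| \lesssim 2^{2sk} \int_{\threed} dv\, |f(v)||h(v)| \int_{\threed} dv_* \, |v-v_*|^{\gamma+2s}|g(v_*)|.
\]
Since $a = \gamma+2s$ lies in $[\gamma,\gamma+2s]$ and $\ang{v_*}^i \geq 1$, Assumption U applies and gives $\int dv_* |v-v_*|^{\gamma+2s}|g(v_*)| \leq \Cupper \ang{v}^{\gamma+2s}$. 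Finally, Cauchy–Schwarz in $v$ against the weight $\ang{v}^{\gamma+2s}$ produces the claimed bound $2^{2sk}\Cupper \nsm f \nsm_{L^2_{\gamma+2s}} \nsm h \nsm_{L^2_{\gamma+2s}}$.

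The only genuinely nontrivial step is the angular integral estimate, and even there the main point is simply to observe that the cutoff $\chi_k$ renders the singular $b$ effectively integrable with the precise quantitative gain $2^{2sk}|v-v_*|^{2s}$ over the ``base'' weight $|v-v_*|^\gamma$. The case distinction $|v-v_*| \lessgtr 2^{-k}$ is the only delicate issue, and it is handled by the observation that the $\chi_k$ support is automatically empty on the small side.
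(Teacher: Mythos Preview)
Your proof is correct and follows essentially the same approach as the paper: both first establish the angular integral bound $\int_{\sph} d\sigma\, B_k \lesssim 2^{2sk}|v-v_*|^{\gamma+2s}$ via the support of $\chi_k$ and \eqref{kernelQ}, then combine Cauchy--Schwarz with \eqref{assumeAupper}. The only cosmetic difference is the order in which you apply Cauchy--Schwarz and Assumption~U (you integrate out $v_*$ first, the paper splits $|fh|$ first), which is immaterial.
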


\begin{proof}
Given the size estimates for $b(\cos \theta)$ in \eqref{kernelQ} and the support of $\chi_k$, clearly
\begin{equation}
\int_{\sph} d \sigma ~ B_k \lesssim
\Phi(|v-v_*|)
\int_{2^{-k-1} |v - v_*|^{-1}}^{2^{-k} |v - v_*|^{-1}}  d \theta ~ \theta^{-1-2s} 
\lesssim 2^{2sk} |v-v_*|^{\gamma+2s}.
\label{bjEST}
\end{equation}
Now we use the Cauchy-Schwartz inequality to obtain
\begin{multline}
\label{CSoften}
\left| \opGminus(f,h)  \right| 
\lesssim 2^{2sk} \int_{\threed} dv \int_{\threed} dv_* ~  |v-v_*|^{\gamma+2s} ~|g_*|~ |h f|
\\
\lesssim 2^{2sk} \left(\int_{\threed} dv~ |f|^2 \int_{\threed} dv_* ~ |g_*|~ |v-v_*|^{\gamma+2s}   \right)^{1/2}
\\
\times
\left(\int_{\threed} dv~ |h |^2 \int_{\threed} dv_* ~  |g_*|~ |v-v_*|^{\gamma+2s}   \right)^{1/2}.
\end{multline}
Therefore \eqref{tminussmall} follows from \eqref{assumeAupper}.
\end{proof}

\begin{proposition}
Given \eqref{assumeAupper},
the inequality below is uniform for any integer $k$: 
\begin{equation}
\left| \opGstar(f,h)   \right|  \lesssim \Cupper ~ 2^{2sk} ~ \nsm f\nsm_{L^2_{\gamma+2s}}  
\nsm h\nsm_{L^2_{\gamma+2s}}. \label{tstarsmall}
\end{equation}
\end{proposition}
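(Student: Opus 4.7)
The plan is to establish an analog of the kernel size estimate \eqref{bjEST} now in the Carleman-form variables, then Cauchy-Schwarz exactly as in Proposition \ref{prop11}. The structural point is that in \eqref{defTKLcarl} for $\opGstar$, both $f$ and $h$ are evaluated at the same outer variable $v'$, so once the inner $d\pi_v$-integral of $\tilde B_k$ has been controlled by a quantity depending only on $v'$ and $v_*$, the argument is cosmetic.

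First I would prove the key pointwise bound
\[
\int_{E_{v_*}^{v'}} d\pi_v ~ \tilde B_k \lesssim 2^{2sk} \, |v'-v_*|^{\gamma+2s}.
\]
For $v \in E_{v_*}^{v'}$, orthogonality gives $|v-v_*|^2 = |v-v'|^2 + |v'-v_*|^2$ and, by \eqref{sinEXPest}, $\sin(\theta/2) = |v-v'|/|v-v_*|$. Since $B$ is supported on $\theta \le \pi/2$, this forces $|v-v'| \le |v'-v_*|$, so $|v-v_*|\approx |v'-v_*|$ and $\theta \approx |v-v'|/|v'-v_*|$. Introducing polar coordinates on the $(n-1)$-plane centered at $v'$ with $r = |v-v'|$, the support of $\chi_k$ restricts $r \approx 2^{-k}$, and \eqref{kernelQ} yields $b(\cos\theta) \lesssim \theta^{-(n-1)-2s}$. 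Plugging these into the definition \eqref{kernelTILDE} of $\tilde B_k$ and using $\Phi(|v-v_*|) = C_\Phi |v-v_*|^\gamma$, one gets
\[
\tilde B_k \lesssim \frac{|v'-v_*|^\gamma \cdot (2^{-k}/|v'-v_*|)^{-(n-1)-2s}}{|v'-v_*| \cdot |v'-v_*|^{n-2}} = 2^{k(n-1+2s)} \, |v'-v_*|^{\gamma+2s},
\]
and the $d\pi_v$ integral produces an additional factor $\approx 2^{-k(n-1)}$ from $r^{n-2}\,dr$ on the dyadic annulus, giving the claimed $2^{2sk} |v'-v_*|^{\gamma+2s}$.

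With this bound in hand, I apply Cauchy-Schwarz exactly as in \eqref{CSoften}:
\[
|\opGstar(f,h)| \lesssim 2^{2sk} \int_{\threed} dv' \int_{\threed} dv_* ~ |v'-v_*|^{\gamma+2s}\, |g_*|\, |f'|\, |h'|,
\]
split by $|g_*|^{1/2}$ onto each factor, and close using Assumption U with $a = \gamma+2s$ (discarding the harmless $\langle v_*\rangle^i \ge 1$): $\int dv_* \, |v'-v_*|^{\gamma+2s}|g_*| \le \Cupper \langle v'\rangle^{\gamma+2s}$. This yields \eqref{tstarsmall}.

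The main obstacle is the kernel bound in the first step: one must correctly transcribe $B(v-v_*,\sigma)$ through the Carleman identification of $\sigma$ with $(2v'-v-v_*)/|2v'-v-v_*|$, keep track of the Jacobian factors $|v'-v_*|^{-1}|v-v_*|^{-(n-2)}$, and verify that the support restriction $\theta \le \pi/2$ makes the geometry of the plane $E_{v_*}^{v'}$ trivialize in favor of $|v'-v_*|$ as the relevant scale. Once this is done, nothing beyond a standard Cauchy-Schwarz and Assumption U is needed.
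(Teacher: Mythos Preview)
Your proposal is correct and follows essentially the same route as the paper: you establish the Carleman-form kernel bound $\int_{E_{v_*}^{v'}} d\pi_v\,\tilde B_k \lesssim 2^{2sk}|v'-v_*|^{\gamma+2s}$ (which is exactly \eqref{bjCARLest} in the paper) by the same geometric observations on $E_{v_*}^{v'}$, and then close with Cauchy--Schwarz and Assumption~U just as in \eqref{CSoften}. The only cosmetic difference is that the paper writes the argument of $b$ via the identity \eqref{Cidenity} rather than through $\sin(\theta/2)=|v-v'|/|v-v_*|$, but these are equivalent.
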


\begin{proof}
As in the previous proposition, the crucial point in this inequality is the symmetry between $h$ and $f$ combined with Cauchy-Schwartz.  
This time the dual representation \eqref{defTKLcarl} will be used.  
In this case, the key quantity  is
\[ 
\frac{1}{|v'-v_*|} 
\int_{E_{v_*}^{v'}}  d \pi_{v} ~ b \left( \frac{|v'-v_*|^2 - |v - v'|^2}{|v' - v_*|^2 + |v - v'|^2} \right) 
\frac{\chi_k(|v-v'|)}{|v-v_*|^{n-2}}. 
\]
The argument of $b$ in this expression follows from the identity
\begin{equation}
\ang{\frac{v-v_*}{|v-v_*|}, \frac{2v' - v - v_*}{|2v' - v - v_*|} } = \frac{|v' - v_*|^2 - |v - v'|^2}{|v - v'|^2 + |v' - v_*|^2}.
\label{Cidenity}
\end{equation}
The support condition yields $|v-v'| \approx 2^{-k}$.  Moreover, since $b(\cos \theta)$ vanishes for $\theta\in [\pi/2,\pi]$, we have
$|v' - v_*| \ge |v' - v|$. 
Consequently, the condition \eqref{kernelQ} gives 
\begin{equation}
b \left( \frac{|v'-v_*|^2 - |v - v'|^2}{|v' - v_*|^2 + |v - v'|^2} \right)
\lesssim
\left(\frac{ |v - v'|^2}{|v' - v_*|^2} \right)^{-\frac{n-1}{2}-s}.
%\label{bESTc}
\notag
\end{equation}
Notice that on $E_{v_*}^{v'}$ we have
$
|v-v_*|^2 = |v-v'|^2 + |v'-v_*|^2 \ge  |v'-v_*|^2.
$
Thus, the integral is bounded by a uniform constant times
\[ 
\int_{E_{v_*}^{v'}}  d \pi_{v} ~\frac{|v'-v_*|^{n-1+2s}}{|v-v'|^{n-1+2s}} |v'-v_*|^{-n+1} \chi_k(|v-v'|) \lesssim 2^{2sk} |v'-v_*|^{2s}. 
\]
By these estimates, it follows that
\begin{equation}  \label{bjCARLest}
\int_{E_{v_*}^{v'}} d \pi_{v}  ~\tilde{B}_k  \lesssim  2^{2sk} |v'-v_*|^{\gamma+2s}.
\end{equation}
As a result, we obtain the upper bound
\[ 
\left| \opGstar(f,h)  \right| \lesssim 2^{2sk} \int_{\threed} dv' \int_{\threed} dv_*   ~ |v' - v_*|^{\gamma+2s}~ |g_* h' f'|. 
\]
Now the estimate \eqref{tstarsmall} easily follows from \eqref{assumeAupper} just as was accomplished in \eqref{CSoften}.
\end{proof}

\begin{proposition}\label{opGstarEST}
Under the assumption \eqref{assumeAupper} with $a = \gamma$,
 we   have 
\begin{equation} 
\left| \opGstarS(f,h)   \right|  \lesssim \Cupper  \nsm f\nsm_{L^2_{\gamma}}  
\nsm h\nsm_{L^2_{\gamma}}.
\notag
\end{equation}
%Here we can choose any $b_s$ such that $2s<b_s<\min\{2,\gamma+2s +n\}$.
\end{proposition}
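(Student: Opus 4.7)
The plan is to recognize $\opGstarS$ as a purely non-differentiating cancellation object: after carrying out the inner integration on the hyperplane $E_{v_*}^{v'}$, the result should collapse to a multiple of $|v'-v_*|^\gamma$, at which point the bound reduces to Assumption U with $a=\gamma$ followed by Cauchy--Schwarz. This is the same mechanism as in the cancellation lemma of \cite{MR1765272}, but performed in the Carleman representation.

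First I would parametrize the hyperplane. Fix $v_*$ and $v'$, set $\rho=|v'-v_*|$, and write $v=v'+u$ with $u\perp(v'-v_*)$, so that $|v-v'|=|u|=:r$, $|v-v_*|^2=\rho^2+r^2$, and $d\pi_v=\omega_{n-2}\,r^{n-2}\,dr$. Using identity \eqref{Cidenity} one has $\cos\theta=(\rho^2-r^2)/(\rho^2+r^2)$, and the integrand depends on $v'$ and $v_*$ only through $\rho$ and through $t:=r/\rho$. A direct substitution converts the inner integral into
\begin{equation*}
\int_{E_{v_*}^{v'}}d\pi_v\,\tilde B\left(\frac{\Phi(v'-v_*)|v'-v_*|^n}{\Phi(v-v_*)|v-v_*|^n}-1\right)
=\mathcal{C}\,|v'-v_*|^\gamma,
\end{equation*}
where, since $b(\cos\theta)$ vanishes for $\theta\in[\pi/2,\pi]$ (so $t\le 1$),
\begin{equation*}
\mathcal{C}=\omega_{n-2}\,2^{n-1}C_\Phi\int_0^1 t^{n-2}\,b(\cos\theta(t))\left[(1+t^2)^{1-n}-(1+t^2)^{(\gamma-n+2)/2}\right]dt.
\end{equation*}

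The one technical point — and the only real obstacle — is verifying that $\mathcal{C}$ is finite; this is where the cancellation happens. Near $t=0$, the upper bound in \eqref{kernelQ} combined with $\sin\theta\approx 2t$ gives $t^{n-2}b(\cos\theta)\lesssim t^{-1-2s}$, which is not integrable on its own. However, Taylor expansion of the bracket at $t=0$ yields
\begin{equation*}
(1+t^2)^{1-n}-(1+t^2)^{(\gamma-n+2)/2}=-\tfrac{n+\gamma}{2}\,t^2+O(t^4),
\end{equation*}
so the full integrand behaves like $t^{1-2s}$ near $0$, which is integrable since $s\in(0,1)$; at $t=1$ the integrand is bounded. (Note that $n+\gamma>0$ by assumption \eqref{kernelP}.) This is precisely the cancellation that distinguishes $\opGstarS$ from the differentiating pieces $\opGplus,\opGstar,\opGminus$.

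Given finiteness of $\mathcal{C}$, we conclude
\begin{equation*}
|\opGstarS(f,h)|\le|\mathcal{C}|\int_{\threed}dv'\,|f'\,h'|\int_{\threed}dv_*\,g_*\,|v'-v_*|^\gamma
\lesssim \Cupper\int_{\threed}dv'\,\langle v'\rangle^\gamma |f'||h'|,
\end{equation*}
using Assumption U with $a=\gamma$. A final application of Cauchy--Schwarz, splitting $\langle v'\rangle^\gamma=\langle v'\rangle^{\gamma/2}\cdot\langle v'\rangle^{\gamma/2}$, yields the claimed bound $\lesssim \Cupper\|f\|_{L^2_\gamma}\|h\|_{L^2_\gamma}$.
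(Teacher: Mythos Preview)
Your proof is correct, but it takes a genuinely different route from the paper's. You compute the inner hyperplane integral \emph{exactly} by scaling: after passing to polar coordinates $r=|v-v'|$ and substituting $t=r/\rho$, the integral factors as $\mathcal{C}\,|v'-v_*|^\gamma$ for a fixed constant, and the cancellation in the bracket $(1+t^2)^{1-n}-(1+t^2)^{(\gamma-n+2)/2}=-\tfrac{n+\gamma}{2}t^2+O(t^4)$ is precisely what renders $\mathcal{C}$ finite. This is essentially the Carleman-side analogue of the cancellation lemma of \cite{MR1765272} (which the paper itself invokes for $K_g(f)$ in Section~\ref{sec:ls}). The paper, by contrast, never evaluates the integral: it inserts the dyadic cutoff $\tilde B_k$, bounds $|A-1|\lesssim\min\{1,|v-v'|^2/|v'-v_*|^2\}$ pointwise via $|c^\alpha-1|\lesssim|c-1|$, reuses the earlier estimate \eqref{bjCARLest} to get $\int_{E_{v_*}^{v'}}\!d\pi_v\,\tilde B_k\,|A-1|\lesssim 2^{(2s-i)k}|v'-v_*|^{\gamma+2s-i}$ for $i=0,2$, and then sums over $k$ to recover $|v'-v_*|^\gamma$. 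Your argument is more elementary and self-contained for this proposition (no dyadic machinery needed), and it exposes the cancellation as a scaling identity; the paper's argument has the advantage of yielding a pointwise bound on $|A-1|$ (not just on the signed integral) and of staying within the dyadic framework used throughout Section~\ref{physicalDECrel}.
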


\begin{proof}  We recall $\opGstarS(f,h)$ from \eqref{opGlabel}. The key quantity here to estimate is the integral
$
\int_{E_{v_*}^{v'}} d \pi_{v}  ~\tilde{B}_k 
\left(
 A   
 -1
 \right),
$
where
$
A \eqdef
\frac{ \Phi(v'-v_*) |v'-v_*|^{n} }{\Phi(v-v_*) |v-v_*|^{n}}.  
$
%% Notice that 
%% $$
%% |v'-v_*|
%% =
%% |v-v_*| \cos\frac{\theta}{2}.
%% %=
%% %|v-v_*| \sqrt{\frac{1+\ang{k,\sigma}}{2}}.
%% $$
%% Therefore $0\le A\le 1$ and as in the previous two propositions we obtain
%% \begin{equation} 
%% \left| \opGstarKS(f,h)   \right|  \lesssim \Cupper 2^{2sk} ~ \nsm f\nsm_{L^2_{\gamma+2s}}  
%% \nsm h\nsm_{L^2_{\gamma+2s}}.
%% \notag
%% \end{equation}
%% It remains to study the cancellations.  For this we
Observe that, on $E_{v_*}^{v'}$,
$$
A %= \left( \frac{1+\ang{k,\sigma}}{2}\right)^{\frac{n+\gamma}{2}}
=
 \left( \frac{|v' - v_*|^2}{|v-v' |^2+|v' - v_*|^2}\right)^{\frac{n+\gamma}{2}}.
$$
Now for any fixed $\alpha > 0$, one has
$
\left| c^\alpha - 1 \right|
\lesssim
\left| c - 1 \right|
$
uniformly for $0\le c \le 1$; thus
\begin{align*}
\int_{E_{v_*}^{v'}} d \pi_{v} &  ~\tilde{B}_k 
\left|  A    -   1 \right|
\lesssim
\int_{E_{v_*}^{v'}} d \pi_{v}  ~\tilde{B}_k ~
\frac{|v-v' |^2}{|v-v' |^2+|v' - v_*|^2}
\\
& \lesssim
\int_{E_{v_*}^{v'}} d \pi_{v}  ~\tilde{B}_k ~
\min \left\{1, \frac{|v-v' |^2}{|v' - v_*|^2} \right\}
\\ 
& \lesssim
\min\{ 1, 2^{-2 k} |v' - v_*|^{-2} \}
\int_{E_{v_*}^{v'}} d \pi_{v}  ~\tilde{B}_k
\lesssim
2^{(2s-i) k} |v' - v_*|^{\gamma+2s-i},
\end{align*}
for both $i = 0$ and $i= 2$ (where the last estimate in the series above follows from \eqref{bjCARLest}). We conclude
%\begin{align*} \int_{E_{v_*}^{v'}}  d \pi_v \tilde B |A-1| \lesssim & 
%\! \sum_{k: 2^k |v' - v_*| \leq 1} 2^{2s k} |v' - v_*|^{\gamma+2s}  + \! \! \! \sum_{k:2^k |v' - v_*| > 1} 2^{2(s-1) k} |v' - v_*|^{\gamma+2s-2}  \\
% \lesssim & ~ |v' - v_*|^{\gamma}. \end{align*}
\begin{multline*} 
\int_{E_{v_*}^{v'}}  d \pi_v ~ \tilde{B} ~ |A-1| 
\lesssim 
 \sum_{k: ~2^k |v' - v_*| \leq 1} 2^{2s k} |v' - v_*|^{\gamma+2s}  
\\
+ 
\sum_{k:~2^k |v' - v_*| > 1} 2^{(2s-2) k} |v' - v_*|^{\gamma+2s-2} 
%\\
 \lesssim   |v' - v_*|^{\gamma}. 
 \end{multline*} 
Now we complete the estimate with Cauchy-Schwartz and \eqref{assumeAupper} as in \eqref{CSoften}.
\end{proof}

\begin{proposition}
For any integer $k$, under the assumption \eqref{assumeAupper},
 we have the uniform estimate 
\begin{equation} 
\left| \opGplus(f,h)   \right|  \lesssim \Cupper ~ 2^{2sk} ~ \nsm f\nsm_{L^2_{\gamma+2s}}  
\nsm h\nsm_{L^2_{\gamma+2s}}.
\label{tplussmall}
\end{equation}
\end{proposition}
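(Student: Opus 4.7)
My plan is to reduce \eqref{tplussmall} to the already-proved ingredients \eqref{bjEST}, \eqref{bjCARLest}, and \eqref{assumeAupper} by applying the Cauchy--Schwarz inequality to the \emph{dual} Carleman-type representation
\[
\opGplus(f,h) = \int_{\threed} dv' \int_{\threed} dv_* \int_{E_{v_*}^{v'}} d\pi_v\, \tilde{B}_k\, g_*\, f(v)\, h(v'),
\]
from \eqref{defTKLcarl}.  The advantage of this representation over the primal one in \eqref{cutoffOP} is that $\tilde{B}_k\, g_*$ is a nonnegative kernel against which Cauchy--Schwarz cleanly separates $|f(v)|^2$ from $|h(v')|^2$; in the primal form the analogous splitting would leave $h(v')$ paired with the $(v,v_*,\sigma)$ measure and require a direct change of variables $v \mapsto v'$, which I would rather avoid.

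After the Cauchy--Schwarz split, the $|h(v')|^2$ factor is straightforward: integrating the hyperplane variable $v$ first and invoking \eqref{bjCARLest} gives $\int_{E_{v_*}^{v'}} d\pi_v\, \tilde{B}_k \lesssim 2^{2sk}|v'-v_*|^{\gamma+2s}$, and the subsequent $v_*$ integral is controlled by \eqref{assumeAupper} with $a = \gamma + 2s$, yielding the bound $\Cupper\, 2^{2sk}\nsm h \nsm_{L^2_{\gamma+2s}}^2$.

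The $|f(v)|^2$ factor requires a bit more care because $f$ depends only on $v$, whereas in the dual representation $v$ is parametrized through the hyperplane $E_{v_*}^{v'}$.  Here I will reverse the Carleman change of variables (the bidirectional identity \eqref{3dualZ} from the Appendix, applied with $v_*$ fixed) to rewrite $\int dv' \int_{E_{v_*}^{v'}} d\pi_v$ as $\int dv \int d\sigma$ with Jacobian $|v-v_*|^{n-2}|v'-v_*|/2^{n-1}$.  This factor cancels exactly against the defining prefactor $2^{n-1}/(|v'-v_*|\,|v-v_*|^{n-2})$ in $\tilde{B}_k$, returning the original kernel $B_k$.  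One is then left with $\int dv\, dv_*\, d\sigma\, B_k\, g_*\, |f(v)|^2$, which \eqref{bjEST} reduces to a $|v-v_*|^{\gamma+2s}$-weighted integral and which \eqref{assumeAupper} then bounds by $\Cupper\, 2^{2sk}\nsm f \nsm_{L^2_{\gamma+2s}}^2$.

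Multiplying the square roots of the two estimated factors gives \eqref{tplussmall}.  There is no serious obstacle here; the only technical point worth verifying is that the Carleman identity is reversible and produces exactly the cancellation of Jacobian factors claimed above, and this is precisely the content of \eqref{3dualZ}.  Thus the argument amounts to a single Cauchy--Schwarz application followed by two applications of the kernel bounds \eqref{bjEST}, \eqref{bjCARLest} already in hand, matched in each factor with the moment condition \eqref{assumeAupper}.
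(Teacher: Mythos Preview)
Your argument is correct and uses the same ingredients as the paper's proof (Cauchy--Schwarz, \eqref{bjEST}, \eqref{bjCARLest}, the Carleman identity, and \eqref{assumeAupper}), only organized in mirror image: the paper starts from the primal form \eqref{cutoffOP}, handles the $|f|^2$ factor directly via \eqref{bjEST}, and for the $|h'|^2$ factor performs the pre-post change of variables together with the $v\leftrightarrow v_*$ swap before invoking the Carleman representation and \eqref{bjCARLest}; you instead start from the Carleman form \eqref{defTKLcarl}, handle the $|h(v')|^2$ factor directly via \eqref{bjCARLest}, and reverse the Carleman identity to reduce the $|f(v)|^2$ factor to \eqref{bjEST}. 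Your route trades the paper's three changes of variables for a single one and is marginally cleaner, but the two proofs are essentially the same.
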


\begin{proof}
Notice that the operator $\opGplus(f,h)$ is given by  either \eqref{defTKLcarl} (with Carleman variables) or \eqref{cutoffOP} (without Carleman variables).
By Cauchy-Schwartz, we have the inequality
$$
\left| \opGplus(f,h)  \right| 
\le 
\int_{\threed} dv \int_{\threed} dv_* \int_{\sph} d \sigma ~ B_k(v-v_*, \sigma) ~|g_*|~ |f h'|
\le
\sqrt{A_1 A_2}
$$
where
$
A_1 \eqdef
\int_{\threed} dv~ |f|^2 \int_{\threed} dv_* \int_{\sph} d \sigma ~ B_k(v-v_*, \sigma) ~|g_*|
$
and
%We use a pre-post collisional change of variables $(v, v_*) \to  (v', v_*')$ and the $(v, v_*)$ symmetry to obtain
\begin{multline*}
A_2 \eqdef
\int_{\threed} dv~ |h' |^2 \int_{\threed} dv_* \int_{\sph} d \sigma ~ B_k(v-v_*, \sigma) ~|g_*|
\\
=
\int_{\threed} dv~ |h |^2 \int_{\threed} dv_* \int_{\sph} d \sigma ~ B_k(v-v_*, \sigma) ~|g_*'|
\\
=
\int_{\threed} dv_*~ |h_* |^2 \int_{\threed} dv \int_{\sph} d \sigma ~ B_k(v-v_*, \sigma) ~|g'|.
\end{multline*}
The various equivalent formulas for $A_2$ follow from the pre-post collisional change of variables $(v, v_*,\sigma) \to  (v', v_*',k)$, and the $v \leftrightarrow v_*$, $\sigma \leftrightarrow -\sigma$ symmetry.
% to obtain
%In the last line we have switched the labels of $v$ and $v_*$ and changed variables $\sigma \to - \sigma$.
Clearly $A_1 \lesssim \Cupper  2^{2sk}  \nsm f\nsm_{L^2_{\gamma+2s}}^2$
as in \eqref{assumeAupper} and \eqref{bjEST}.
%
%For the first integral notice that using the same techniques as in the previous propositions
%$$
% \int_{\threed} dv_* \int_{\sph} d \sigma ~ B_k(v-v_*, \sigma) ~|g_*| 
%\lesssim 
%2^{2sk} \int_{\threed} dv_* ~  |g_*|~ |v-v_*|^{\gamma+2s} 
%\lesssim  2^{2sk} \ang{v}^{\gamma+2s}. 
%$$
%In the last step we have used \eqref{assumeAupper}.  
%
For %the second integral involving 
$A_2$, we claim that
\begin{equation}
\label{claimGest}
 \int_{\threed} dv \int_{\sph} d \sigma ~ B_k(v-v_*, \sigma) ~|g'| 
\lesssim 
2^{2sk} \int_{\threed} dv' ~  |g'|~ |v_*-v'|^{\gamma+2s}. 
%\lesssim  2^{2sk} \ang{v}^{\gamma+2s}. 
\end{equation}
Then  $A_2 \lesssim \Cupper 2^{2sk}  \nsm h\nsm_{L^2_{\gamma+2s}}^2$ follows from \eqref{assumeAupper} and the proof will be complete.  With the Carleman representation,  
\cite[Appendix]{gsNonCutJAMS}, the left-hand side of \eqref{claimGest} is
\[ 
\int_{\threed} dv \int_{\sph} d \sigma ~ B_k(v-v_*, \sigma) ~|g'| = \int_{\threed} d v' \int_{E_{v_*}^{v'}} d \pi_{v} ~ \tilde{B}_k ~ |g'|.
\]
Now the claim \eqref{claimGest} follows from the estimate \eqref{bjCARLest}.
\end{proof}

\subsection{Cancellations}
\label{sec:SSC}
In this subsection we study the differences
 $\left( \opGplus   - \opGminus \right)$ and 
$\left( \opGplus   - \opGstar \right)$.
%from \eqref{cutoffOP} and \eqref{defTKLcarl}.  
The estimates herein will exploit the cancellations to obtain better dependence on $k$ than in Section \ref{sec:trivial}.  The price to be paid  is that we must measure the magnitude of the differences anisotropically.

The scaling dictated by the problem is that of the paraboloid: namely, that the function $f(v)$ should be thought of as the restriction of some ``lifted'' function $F$ of $n+1$ variables to the paraboloid $(v, \frac{1}{2} |v|^2)$.  Consequently, the correct metric to use in measuring the length of vectors in $\threed$ will be the metric on the paraboloid in $\mathbb{R}^{n+1}$ induced by the $(n+1)$-dimensional Euclidean metric.  To simplify calculations, we  work directly with the function $F$ rather than $f$ and take its $(n+1)$-dimensional derivatives in the usual Euclidean metric.

To begin, we  will write down a formula relating differences of $F$ at nearby points on the paraboloid to  derivatives of $F$ as a function of $n+1$ variables.  
To this end, fix any two $v,v' \in \threed$, and consider $\CurveP : [0,1] \rightarrow \threed$ and $\CurveEP : [0,1] \rightarrow \mathbb{R}^{n+1}$ given by
\[ 
\CurveP(\Ctheta) \eqdef \Ctheta v' + (1-\Ctheta) v,
\quad 
\mbox{ and } 
\quad 
\CurveEP(\Ctheta) \eqdef  \left(\Ctheta v' + (1-\Ctheta)v, \frac{1}{2} \left| \Ctheta v' + (1-\Ctheta) v \right|^2 \right). \]
Here $\CurveEP$ lies in the paraboloid 
$
\set{(v_1,\ldots,v_{n+1}) \in \mathbb{R}^{n+1}}{ v_{n+1} = \frac{1}{2} (v_1^2 + \cdots +v_n^2)}
$;
also note that $\CurveP(0) = v$ and $\CurveP(1) = v'$.  
Elementary calculations show that
$$
\frac{d \CurveEP}{d \Ctheta}(\Ctheta) = \left( v' - v, \ang{\CurveP(\Ctheta), v' - v} \right), 
\quad 
 \mbox{ and }
\quad 
 \frac{d^2 \CurveEP}{d \Ctheta^2} = (0, |v'-v|^2).
$$
Now we use the standard trick of writing the difference of $F$ at two different points in terms of an integral of a derivative (in this case the integral is along the path $\CurveP$):
\begin{align} 
F\left(v',\frac{|v'|^2}{2} \right) - F\left(v, \frac{|v|^2}{2} \right) & = \int_0^1 d \Ctheta ~ \frac{d}{d \Ctheta} F(\CurveEP(\Ctheta)) \nonumber \\
& = \int_0^1 d \Ctheta  \left( \frac{d \CurveEP}{d \Ctheta} \cdot  (\tilde{\nabla} F) (\CurveEP(\Ctheta)) \right), 
\label{paraboladiff}
\end{align}
where the dot product on the right-hand side is the usual Euclidean inner-product on $\mathbb{R}^{n+1}$ and $\tilde{\nabla}$ is the $(n+1)$-dimensional gradient of $F$. 
For convenience we define
\[ 
|\tilde{\nabla}|^i F(v_1,\ldots,v_{n+1}) \eqdef 
\max_{0\le j \leq i}\sup_{|\xi| \leq 1} \left| \left(\xi \cdot \tilde{\nabla} \right)^j F(v_1,\ldots,v_{n+1}) \right|, 
\quad
  i=1,2,
\]
where $\xi \in \mathbb{R}^{n+1}$ and $|\xi|$ is the usual Euclidean length. 

%In particular, note that we have defined $|\tilde{\nabla}|^0 F = |F|$.

If $v$ and $v'$ are related by the collision geometry, \eqref{sigma}, then $\ang{v-v',v'-v_*} = 0$, which yields that 
$$
\ang{\CurveP(\Ctheta),v'-v}  = \ang{v_*,v'-v} - (1-\Ctheta) |v-v'|^2.
$$
Thus, whenever $|v-v'| \leq 1$, which holds near the singularity ($k\ge 0$), we have
$$
\left| \frac{d \CurveEP}{d \Ctheta} \right| \lesssim |v-v'| \ang{v_*}. 
$$
%In this section we generally suppose that $|v-v'| \leq 1$ and $k\ge 0$.
 In particular, for differences related by the collision geometry we have:
\begin{align}
 \left| F\left(v',\frac{|v'|^2}{2}\right) - F\left(v, \frac{|v|^2}{2}\right)\right| 
 &
  \lesssim 
  \ang{v_*} |v-v'|  \int_0^1  d \Ctheta ~ |\tilde{\nabla}| F (\CurveEP(\Ctheta)). 
 \label{paradiff1} 
\end{align}
%The $*$ analog of \eqref{paradiff1} also holds.
Furthermore,
by subtracting the linear term from both sides of \eqref{paraboladiff} and using the integration trick iteratively on the right-hand side of \eqref{paraboladiff}, we obtain
\begin{multline}
\left| F\left(v',\frac{|v'|^2}{2} \right) -  F\left(v, \frac{|v|^2}{2}\right) -  \frac{d \CurveEP}{d \Ctheta}(0) \cdot \tilde{\nabla} F(v) \right| 
\\
 \lesssim
\ang{v_*}^2 |v-v'|^2 \int_0^1 d \Ctheta  ~ | \tilde{\nabla}|^2 F (\CurveEP(\Ctheta)). 
\label{paradiff2}
\end{multline}
We note that, by symmetry, the same result holds when the roles of $v$ and $v'$ are reversed (which only changes the curve $\CurveEP$ by reversing the parametrization: $\CurveEP(\Ctheta)$ becomes $\CurveEP(1-\Ctheta)$).  
We will use these two basic cancellation inequalities to prove the  cancellation estimates for the trilinear form in the following propositions.

\begin{proposition}
\label{cancelFprop}
Suppose $h$ is a Schwartz function on $\threed$ given by the restriction of some Schwartz function $H$ 
on $\mathbb{R}^{n+1}$ to the paraboloid $(v, \frac{1}{2} |v|^2)$.  Let $|\tilde{\nabla}|^i h$ be the restriction of 
$|\tilde{\nabla}|^i H$ to the same paraboloid $(i=1,2)$.  Then, for any $k \geq 0$,
\begin{equation}
\label{cancelf21}
 \left| \left( \opGplus   - \opGminus \right)(f,h) \right| 
   \lesssim 
    \Cupper ~ 
2^{(2s-i)k} ~ \nsm  f\nsm_{L^2_{\gamma+2s}}  
\nsm |\tilde{\nabla}|^i h\nsm_{L^2_{\gamma+2s}}. 
 \end{equation}
 Here when $s\in (0, 1/2)$ in \eqref{kernelQ} then $i=1$ and when $s \in [1/2, 1)$  we have $i = 2$.
\end{proposition}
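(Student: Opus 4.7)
The plan is to gain the factor $2^{-ik}$ over the trivial estimate of Proposition \ref{prop11} by exploiting cancellation in $h(v')-h(v)$ via the pointwise lifted bounds \eqref{paradiff1} and \eqref{paradiff2}. Starting from
\[
(\opGplus - \opGminus)(f,h) = \int_{\threed} dv \int_{\threed} dv_* \int_{\sph} d\sigma\, B_k(v-v_*,\sigma)\, g_*\, f\, (h'-h),
\]
the support condition $|v-v'|\lesssim 2^{-k}$ on $\operatorname{supp}(\chi_k)$ is what converts each factor of $|v-v'|^i$ in the integrand into $2^{-ik}$.

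For $s\in(0,1/2)$, where $i=1$, I would substitute \eqref{paradiff1} directly to get $|h'-h|\lesssim \langle v_*\rangle|v-v'|\int_0^1 |\tilde\nabla|H(\CurveEP(\Ctheta))\, d\Ctheta$. Then Minkowski's inequality in $\Ctheta$, Cauchy--Schwartz in $(v,v_*,\sigma)$ exactly as in \eqref{CSoften}, and the bound $\int d\sigma\, B_k\lesssim 2^{2sk}|v-v_*|^{\gamma+2s}$ from \eqref{bjEST} together produce the prefactor $2^{(2s-1)k}$. Assumption~U with $i=1$ absorbs the $\langle v_*\rangle$ weight, while a change of variable $v\mapsto\CurveEP(\Ctheta)$ along the (affine) lifted path, whose Jacobian is uniformly bounded on $|v-v'|\le 1$, converts $\int|\tilde\nabla H(\CurveEP(\Ctheta))|^2$ back to $\||\tilde\nabla|h\|_{L^2_{\gamma+2s}}^2$.

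For $s\in[1/2,1)$, where $i=2$, a single factor of $|v-v'|$ is insufficient, so I would Taylor-expand to second order and split
\[
h(v')-h(v) = \frac{d\CurveEP}{d\Ctheta}(0)\cdot \tilde\nabla H\!\bigl(v,\tfrac{|v|^2}{2}\bigr) + R,
\]
with $|R|\lesssim \langle v_*\rangle^2|v-v'|^2\int_0^1 |\tilde\nabla|^2 H(\CurveEP(\Ctheta))\,d\Ctheta$ by \eqref{paradiff2}. The remainder supplies $|v-v'|^2\lesssim 2^{-2k}$ and is handled exactly as in the $i=1$ case, now using Assumption~U with $i=2$ to absorb $\langle v_*\rangle^2$. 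The essential cancellation is in the linear piece: since $\tilde\nabla H(v,|v|^2/2)$ is $\sigma$-independent, only
\[
\int_{\sph} d\sigma\, B_k(v-v_*,\sigma)\,(v'-v) = -(v-v_*)\, J(v,v_*),\qquad J\eqdef \int_{\sph} d\sigma\, B_k\,\sin^2(\theta/2),
\]
survives, by the rotational symmetry of $B_k$ about the axis $k=(v-v_*)/|v-v_*|$. The extra factor $\sin^2(\theta/2)\approx \theta^2\approx (2^{-k}|v-v_*|^{-1})^2$ improves \eqref{bjEST} to $|J|\lesssim 2^{(2s-2)k}|v-v_*|^{\gamma+2s-2}$, which is precisely the needed gain. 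Writing $d\CurveEP/d\Ctheta(0)=(v'-v,\langle v,v'-v\rangle)$, bounding $\langle v\rangle\lesssim \langle v_*\rangle\langle v-v_*\rangle$, and applying Cauchy--Schwartz together with Assumption~U completes the linear-term estimate.

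The main obstacle I expect is the linear-term analysis in the $s\ge1/2$ regime: after absorbing $\langle v\rangle\lesssim \langle v_*\rangle\langle v-v_*\rangle$, the residual exponents $a=\gamma+2s-1$ and $a=\gamma+2s$ must both lie in $[\gamma,\gamma+2s]$ for Assumption~U to apply, which holds exactly when $s\ge 1/2$. This admissibility threshold is what dictates the dichotomy $i=1$ vs.\ $i=2$ in the statement of the proposition, and it forces one to use the full rotational symmetry of $B_k$ (rather than just the support estimate) to recover the $\sin^2(\theta/2)$ gain.
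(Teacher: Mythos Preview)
Your strategy coincides with the paper's: for $s\in[1/2,1)$, split $h'-h$ into the linear piece $\frac{d\CurveEP}{d\Ctheta}(0)\cdot\tilde\nabla H$ and the second-order remainder controlled by \eqref{paradiff2}; for $s\in(0,1/2)$, use \eqref{paradiff1} directly. Your linear-term argument via the scalar $J=\int d\sigma\,B_k\sin^2(\theta/2)$ is equivalent to the paper's projection of $v'-v$ onto $k$ (the paper writes the surviving vector as $\frac{v-v_*}{|v-v_*|}\frac{|v'-v|^2}{|v-v_*|}$), and your identification of the borderline exponent $a=\gamma+2s-1$ in Assumption~U is exactly what forces $i=2$ when $s\ge 1/2$.

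There is, however, a real gap in your remainder estimate. You assert that the change of variable $v\mapsto u=\CurveP(\Ctheta)$ has Jacobian ``uniformly bounded on $|v-v'|\le 1$'' and thereby converts the integral of $|\tilde\nabla H(\CurveEP(\Ctheta))|^2$ into $\||\tilde\nabla|^ih\|_{L^2_{\gamma+2s}}^2$. Two issues. First, the Jacobian equals $\bigl(1-\tfrac{\Ctheta}{2}\bigr)^2\bigl((1-\tfrac{\Ctheta}{2})+\tfrac{\Ctheta}{2}\langle k,\sigma\rangle\bigr)$ and is bounded below because $\langle k,\sigma\rangle\ge 0$ on the support of $b$; the condition $|v-v'|\le 1$ is irrelevant here. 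Second---and this is the substantive point---after the change of variables the old pole $k=(v-v_*)/|v-v_*|$ becomes a function of $\sigma$ (since $v$ is now determined implicitly by $u,v_*,\sigma,\Ctheta$), so you cannot directly invoke \eqref{bjEST} for the $d\sigma$-integral in the new coordinates. The paper closes this by verifying that $1-\langle k,\sigma\rangle\approx 1-\langle\tilde k,\sigma\rangle$ with $\tilde k\eqdef(u-v_*)/|u-v_*|$ independent of $\sigma$, so that the analogue of \eqref{bjEST} persists after the change. Your proposal omits this comparability step; without it the $h$-factor in the Cauchy--Schwartz split cannot be controlled, and the argument is incomplete.
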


\begin{proof}
For $s \in [1/2, 1)$,
we write out the relevant difference into two terms
$$
h' -  h
=
\left(  h' -   h  -  \frac{d \CurveEP}{d \Ctheta}(0) \cdot (\tilde{\nabla}   h )(v) \right)
 +
 \frac{d \CurveEP}{d \Ctheta}(0) \cdot (\tilde{\nabla}   h )(v).
$$
We further split 
$
\left(\opGplus   - \opGminus \right)(f,h) 
=
\mathcal{D}^{\mathbb{I}}+ \mathcal{D}^{\mathbb{II}}
$
where $\mathcal{D}^{\mathbb{I}}$ corresponds to the first term in the splitting above.
We begin by considering the last term
$$
\mathcal{D}^{\mathbb{II}}
\eqdef
\int_{\threed} dv \int_{\threed} dv_* \int_{\sph} d \sigma ~B_k(v-v_*, \sigma) ~ g_* f 
~  \frac{d \CurveEP}{d \Ctheta}(0) \cdot (\tilde{\nabla}   h )(v).
$$
Notice that $\frac{d \CurveEP}{d \Ctheta}(0)$
is linear in $v'-v$ and has no other dependence on $v'$.  Thus the symmetry of $B_k$ with respect to $\sigma$ around the direction $\frac{v-v_*}{|v-v_*|}$ forces all components of $v'-v$ to vanish (when integrated in $\sigma$) 
except the component in the symmetry direction.
Thus, one may replace $v'-v$ with $\frac{v-v_*}{|v-v_*|} \ang{v'-v, \frac{v-v_*}{|v-v_*|}}$. 
%in the expression for $\frac{d \CurveEP}{d \Ctheta}(0)$.  
Since $\ang{v'-v,v'-v_*} = 0$, the vector further reduces to $\frac{v-v_*}{|v-v_*|}\frac{|v'-v|^2}{|v-v_*|}$.  Since $|v'-v| \approx 2^{-k}$ 
%and $|v'-v| \le |v-v_*|$ 
we obtain that 
\[ 
\left| \frac{v-v_*}{|v-v_*|}\frac{|v'-v|^2}{|v-v_*|} \right| 
\leq 2^{-2 k} |v-v_*|^{-1}.
\]
The last coordinate direction of $\frac{d \CurveEP}{d \Ctheta}(0)$ is given by $\ang{v,v'-v}$ which
 reduces to 
\[ 
\left| \ang{v, \frac{v-v_*}{|v-v_*|}\frac{|v'-v|^2}{|v-v_*|}} \right| \lesssim 
\left( |v'-v|^2 + |v-v_*|^{-1} |v'-v|^2 \ang{v_*}  \right). 
\]
With these bounds for $\mathcal{D}^{\mathbb{II}}$, we must control the integral
\begin{multline*} 
\left| \mathcal{D}^{\mathbb{II}} \right| 
\lesssim
2^{-2 k} \int_{\threed} dv \int_{\threed} dv_* \int_{\sph} d \sigma ~  B_k ~ \ang{v_*} |g_*| |f| ( |\tilde{\nabla}| h)
\left( 1 + |v-v_*|^{-1} \right)
\\
\lesssim
2^{-2 k} 
%\sqrt{ 2^{2sk} \Cupper} ~ \nsm  f\nsm_{L^2_{\gamma+2s}} 
\left(
\int_{\threed} dv |f|^2
\int_{\threed} dv_* \ang{v_*} |g_*|  \left( 1 + |v-v_*|^{-1} \right)
\int_{\sph} d \sigma ~  B_k 
\right)^{1/2}
\\
\times
\left(
\int_{\threed} dv  ( |\tilde{\nabla}| h)^2
\int_{\threed} dv_* \ang{v_*} |g_*|  \left( 1 + |v-v_*|^{-1} \right)
\int_{\sph} d \sigma ~  B_k 
\right)^{1/2}.
\end{multline*} 
We complete the estimate for this term with \eqref{bjEST} and  \eqref{assumeAupper} for $a = \gamma+2s$ and $a=\gamma+2s -1$.  Here we used that $2s - 1 \ge 0$ since $s\in [1/2, 1)$.  
(Note that  \eqref{assumeAupper} for the case $a=\gamma+2s -1$ easily follows from the cases $a = \gamma+2s$ and $a = \gamma$.)

We will now estimate $\mathcal{D}^{\mathbb{I}}$.
We use the difference estimate
\eqref{paradiff2}  to obtain that
$$
\left| \mathcal{D}^{\mathbb{I}} \right|
  \lesssim 2^{-2k} \int_0^1 d \Ctheta \int_{\threed} dv \int_{\threed} dv_* \int_{\sph} d \sigma ~ B_k ~ \ang{v_*}^2 ~ |g_* f| 
  ~ |\tilde{\nabla}|^2 h (\CurveP(\Ctheta) ).
$$
Note that the factor $2^{-2k}$ comes directly from \eqref{paradiff2}.
With that last estimate, Cauchy-Schwartz (as in the previous case), and \eqref{assumeAupper}, it suffices to show that
\begin{multline}
%\notag
 \left( \int_0^1 d \Ctheta \int_{\threed} dv \int_{\threed} dv_* \int_{\sph} d \sigma ~ B_k 
 \ang{v_*}^2  |g_* | 
 \left|  |\tilde{\nabla}|^2 h(\CurveP(\Ctheta))\right|^2 \right)^\frac{1}{2}  
 \\
 \lesssim  
 \Cupper  2^{sk} \nsm   |\tilde{\nabla}|^2 h \nsm_{L^2_{\gamma+2s}}. 
 \label{covterm}
\end{multline}
This  bound
 follows from the  change of variables $u = \Ctheta v' + (1-\Ctheta)v$, which sends  $v$ to $u$.  From the collisional variables \eqref{sigma}, we see  (with $\delta_{ij}$  the Kronecker delta) that 
$$
\frac{d u_i}{ dv_j} = (1-\Ctheta) \delta_{ij} + \Ctheta \frac{d v'_i}{ dv_j}
 = \left(1-\frac{\Ctheta}{2}\right)  \delta_{ij} + \frac{\Ctheta}{2} k_{j} \sigma_{i},
$$
with the unit vector $k = (v-v_*)/|v-v_*|$.  Thus the Jacobian is 
$$
\left| \frac{d u_i}{ dv_j}  \right| 
= 
\left(1-\frac{\Ctheta}{2}\right)^2\left\{
\left(1-\frac{\Ctheta}{2}\right) + 
\frac{\Ctheta}{2} \ang{k, \sigma}
\right\}.
$$
Since  $b(\ang{k, \sigma}) = 0$ when $\ang{k ,\sigma} \leq 0$ from \eqref{kernelQ}, and  $\Ctheta \in [0,1]$, 
it follows that 
the Jacobian  is bounded from below on the support of the integral \eqref{covterm}. 
But after this change of variables, the old pole $k=(v-v_*)/|v-v_*|$ moves with the angle $\sigma$.  However,  when one takes $\tilde k = (u-v_*)/|u-v_*|$, 
then
$
1- \ang{ k,\sigma } \approx  1 - \left< \right. \! \tilde{k}, \sigma \! \left. \right>, 
$
meaning that the angle to the pole is comparable to the angle to $\tilde{k}$ (which does not vary with $\sigma$).
Thus the estimate analogous to \eqref{bjEST} will continue to hold after this change of variables, giving precisely the estimate in \eqref{covterm}.

It remains to prove \eqref{cancelf21} for $s\in (0, 1/2)$.  This estimate is exactly the same as the one for $\mathcal{D}^{\mathbb{I}}$ except that the cancellation term $\frac{d \CurveEP}{d \Ctheta}(0) \cdot (\tilde{\nabla}   h )(v)$ is unnecessary and we can use \eqref{paradiff1} instead of \eqref{paradiff2} which allows us to take $i =1$ in \eqref{assumeAupper}.
\end{proof}

\begin{proposition}
\label{cancelHprop}
As in Proposition \ref{cancelFprop}, suppose $f$ is a Schwartz function on $\threed$ which is given by the restriction of some Schwartz function in $\mathbb{R}^{n+1}$ to the paraboloid $(v, \frac{1}{2} |v|^2)$ and define $|\tilde{\nabla}|^i f$ analogously for $i=1,2$. 
Then for any $k\ge 0$, we have 
\begin{equation}
\label{cancelh2g1}
 \left| \left( \opGplus   - \opGstar \right)(f,h) \right| 
   \lesssim 
    \Cupper ~ 
2^{(2s-i)k} ~ \nsm |\tilde{\nabla}|^i f\nsm_{L^2_{\gamma+2s}}  
\nsm h\nsm_{L^2_{\gamma+2s}}. 
 \end{equation}
  Here when $s\in (0, 1/2)$ in \eqref{kernelQ} then $i=1$ and when $s \in [1/2, 1)$  we have $i = 2$.
%The above inequalities hold uniformly in $k \geq 0$.
\end{proposition}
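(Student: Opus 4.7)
The plan is to mirror the proof of Proposition \ref{cancelFprop}, but to use the dual Carleman representation \eqref{defTKLcarl} so that the relevant difference shifts from $h' - h$ to $f - f'$. First I would write
$$(\opGplus - \opGstar)(f,h) = \int_{\threed} dv' \int_{\threed} dv_* \int_{E_{v_*}^{v'}} d\pi_v ~ \tilde{B}_k ~ g_* ~ h' ~ (f - f').$$
Every $v \in E_{v_*}^{v'}$ satisfies $\ang{v-v', v'-v_*} = 0$ automatically, which is precisely the collisional orthogonality used in the paraboloid difference inequalities \eqref{paradiff1}--\eqref{paradiff2}. Hence those bounds apply to $f - f'$, with the roles of $v$ and $v'$ interchanged compared to Proposition \ref{cancelFprop}.

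For $s \in [1/2, 1)$, I would Taylor-expand $f - f'$ about $v'$, splitting $f - f' = L + R$ where $L$ is a linear expression in $v-v'$ arising from $\frac{d\CurveEP}{d\Ctheta}(1) \cdot \tilde{\nabla} f(v')$ and $R$ is a second-order remainder satisfying $|R| \lesssim \ang{v_*}^2 |v-v'|^2 \int_0^1 d\Ctheta ~ |\tilde{\nabla}|^2 f(\CurveP(\Ctheta))$ via \eqref{paradiff2}. The linear piece $L$ vanishes after integration over $E_{v_*}^{v'}$: by \eqref{Cidenity} and the on-hyperplane identity $|v - v_*|^2 = |v-v'|^2 + |v'-v_*|^2$, the kernel $\tilde{B}_k$ depends on $v$ only through the scalar $|v-v'|$ and is therefore rotationally symmetric under rotations of $v-v'$ inside $E_{v_*}^{v'}$. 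Both the spatial components $v' - v$ and the last coordinate $\ang{v', v' - v}$ of $\frac{d\CurveEP}{d\Ctheta}(1)$ are linear in $v - v'$ (with $v'$ held fixed), so each component has zero mean against $\tilde{B}_k$ on the hyperplane. Unlike the spherical cancellation in Proposition \ref{cancelFprop}, here the linear contribution disappears outright rather than collapsing to a single surviving component.

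For the remainder $R$, I would apply Cauchy-Schwartz as in \eqref{CSoften}, pulling out the factor $|v-v'|^2 \lesssim 2^{-2k}$ to account for the $2^{-2k}$ gain in \eqref{cancelh2g1}. The task then reduces to showing
$$\int_0^1 d\Ctheta \int_{\threed} dv' \int_{\threed} dv_* \int_{E_{v_*}^{v'}} d\pi_v ~ \tilde{B}_k ~ \ang{v_*}^2 |g_*| ~ \left| |\tilde{\nabla}|^2 f(\CurveP(\Ctheta)) \right|^2 \lesssim \Cupper^2 ~ 2^{2sk} \nsm |\tilde{\nabla}|^2 f\nsm_{L^2_{\gamma+2s}}^2.$$
For this I would substitute $v \to u = \CurveP(\Ctheta)$ keeping $v'$ and $v_*$ fixed; since $u - v' = (1-\Ctheta)(v-v')$ remains perpendicular to $v' - v_*$, the new variable $u$ still lies in $E_{v_*}^{v'}$, with hyperplane Jacobian $(1-\Ctheta)^{n-1}$. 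Combining the transported kernel estimate \eqref{bjCARLest} with \eqref{assumeAupper} (applied with $a = \gamma + 2s$ and, as needed, $a = \gamma + 2s - 1$) then closes the bound. The case $s \in (0, 1/2)$ is simpler: no cancellation is required, one applies \eqref{paradiff1} directly to pick up a factor $|v-v'| \lesssim 2^{-k}$, and the same Cauchy-Schwartz and change-of-variables argument yields the $2^{(2s-1)k}$ decay with $i = 1$ in \eqref{assumeAupper}.

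The main obstacle I anticipate is the change of variables itself, since the hyperplane Jacobian $(1-\Ctheta)^{n-1}$ degenerates at $\Ctheta = 1$. The remedy will be to split the $\Ctheta$-integration into $[0, 1/2]$ and $[1/2, 1]$ and perform the symmetric substitution $v' \to u$ on the upper half, exploiting the $v \leftrightarrow v'$ exchange symmetry visible in the Carleman-represented integrand on $E_{v_*}^{v'}$. As in the spherical change of variables at the end of the proof of Proposition \ref{cancelFprop}, care will be required to verify that, after substitution, the pole of $b$ remains comparable to the original one so that \eqref{bjCARLest} survives uniformly in $\Ctheta$.
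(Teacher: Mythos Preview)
Your overall architecture matches the paper's proof exactly: write $(\opGplus - \opGstar)(f,h)$ via \eqref{defTKLcarl} as an integral of $\tilde B_k\, g_*\, h'\,(f-f')$, split $f-f'$ into a linear piece and a remainder using the paraboloid Taylor expansion, observe that the linear piece $\mathcal D^{\mathbb{II}}_*$ vanishes identically by rotational symmetry of $\tilde B_k$ in $v-v'$ on the hyperplane $E_{v_*}^{v'}$, and then control the remainder by Cauchy--Schwartz together with a change of variables $v\to u=\CurveP(\Ctheta)$. Your symmetry argument for the vanishing of the linear term is correct and in fact slightly more explicit than the paper's.

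The gap is in the change-of-variables step. Your substitution $v\to u$ on the hyperplane leaves $u$ \emph{constrained} to $E_{v_*}^{v'}$; invoking ``the transported kernel estimate \eqref{bjCARLest}'' would then integrate $u$ out, whereas you need $u$ to survive as the free variable carrying $|\tilde\nabla|^i f$ in $L^2_{\gamma+2s}$. To actually reach $\|\,|\tilde\nabla|^i f\,\|_{L^2_{\gamma+2s}}$ you would have to reparametrize the pair $(v',u)$ on the constraint surface $\ang{u-v',v'-v_*}=0$ so that $u$ becomes free in $\threed$, which is nothing other than undoing the Carleman representation. This is precisely the paper's route: it reverts to the $\sigma$-representation \emph{before} changing variables and then applies the identical substitution from \eqref{covterm}, whose $\threed$-Jacobian $(1-\tfrac{\Ctheta}{2})^2\bigl[(1-\tfrac{\Ctheta}{2})+\tfrac{\Ctheta}{2}\ang{k,\sigma}\bigr]$ is bounded below uniformly in $\Ctheta\in[0,1]$. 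In particular, no degeneracy at $\Ctheta=1$ ever appears.

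Your proposed remedy for the degeneracy---splitting $[0,1]$ and using a ``$v\leftrightarrow v'$ exchange symmetry visible in the Carleman-represented integrand''---does not work: neither the constraint $\ang{v-v',v'-v_*}=0$ nor the kernel $\tilde B_k$ is symmetric under $v\leftrightarrow v'$ with $v_*$ fixed (the genuine pre-post symmetry swaps $v_*\leftrightarrow v_*'$ simultaneously). A minor point: since $\mathcal D^{\mathbb{II}}_*=0$ outright, the auxiliary case $a=\gamma+2s-1$ of \eqref{assumeAupper} is not needed here; that was only used in Proposition~\ref{cancelFprop} to control the surviving projected linear term.
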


\begin{proof}
This proof follows the pattern that is now well-established. 
 The new feature in \eqref{cancelh2g1}   is that, from \eqref{defTKLcarl}, the pointwise differences to examine are
$$
f -  f'
=
\left(  f -  f'  -  \frac{d \CurveEP}{d \Ctheta}(1) \cdot (\tilde{\nabla} f)(v') \right)
 +
 \frac{d \CurveEP}{d \Ctheta}(1) \cdot (\tilde{\nabla} f)(v').
$$
We again 
split 
$
\left( \opGplus   - \opGstar \right)(f,h)
=
\mathcal{D}^{\mathbb{I}}_*+ \mathcal{D}^{\mathbb{II}}_*,
$
where $\mathcal{D}^{\mathbb{I}}_*$ corresponds to the first term in the splitting above.
For the last term $\mathcal{D}^{\mathbb{II}}_*$, we have
$$
\mathcal{D}^{\mathbb{II}}_*
\eqdef
 \int_{\threed} dv'  ~
 \int_{\threed} dv_*   \int_{E_{v_*}^{v'}} d \pi_v  ~  
\tilde{B}_k ~
 g_* ~ h' ~
  \frac{d \CurveEP}{d \Ctheta}(1) \cdot (\tilde{\nabla} f)(v')= 0.
$$
To see this, note that, in this integral, as $v$ varies on circles of constant distance to $v'$, the entire integrand is constant except for $\frac{d \CurveEP}{d \Ctheta}(1)$.  If we write $\frac{d \CurveEP}{d \Ctheta}(1)$ as a sum of two vectors, one lying in the span of the first $n$ directions and the second pointing in the last direction, it follows that we may replace the former vector by its projection onto the direction determined by $v' - v_*$.  But since the original vector points in the direction $v'-v$, the projection vanishes.  Since the last direction of $\frac{d \CurveEP}{d \Ctheta}(1)$  is exactly $\ang{v',v'-v}$, the corresponding integral of this over $v$ also vanishes by symmetry.

To estimate $\mathcal{D}^{\mathbb{I}}_*$ we use \eqref{paradiff1} or \eqref{paradiff2} to observe that
$$
\left| \mathcal{D}^{\mathbb{I}}_* \right|
\lesssim
2^{-ik}
\int_0^1 d \Ctheta
\int_{\threed} dv' \int_{\threed} dv_* \int_{E_{v_*}^{v'}} d \pi_v  ~
\tilde{B}_k ~\ang{v_*}^i \left| g_*  h' \right|~ \left|  |\tilde{\nabla}|^i f(\CurveP(\Ctheta)) \right|. 
$$
In other words, when $s\in (0, 1/2)$, we use \eqref{paradiff1} and obtain $i=1$ and, alternatively, when 
$s\in [1/2,1)$ in \eqref{kernelQ}, we use \eqref{paradiff2} and obtain $i=2$ above.  Since $\mathcal{D}^{\mathbb{II}}_* = 0$ adding the cancellation term $\frac{d \CurveEP}{d \Ctheta}(1) \cdot (\tilde{\nabla} f)(v')$ when $s\in (0, 1/2)$ causes no new problems.
Now we can estimate $\left| \mathcal{D}^{\mathbb{I}}_* \right|$ above using Cauchy-Schwartz, 
\eqref{bjCARLest}, and \eqref{assumeAupper} to conclude that it is uniformly bounded above by a fixed constant times
$\Cupper^{1/2}$ multiplied by
$$
2^{(s-i)k}
\nsm h \nsm_{L^2_{\gamma + 2s}}
\left(
\int_{\threed} dv_*
\ang{v_*}^i \left| g_*  \right|
\int_0^1 d \Ctheta
\int_{\threed} dv'  \int_{E_{v_*}^{v'}} d \pi_v  ~ \tilde{B}_k  \left|  |\tilde{\nabla}|^i f(\CurveP(\Ctheta))\right|^2
\right)^{1/2}. 
$$
The integrals involving 
%$\left( |\tilde{\nabla}|^i f\right)(\CurveP(\Ctheta))$ 
$|\tilde{\nabla}|^i f(\CurveP(\Ctheta))$ 
are estimated as in \eqref{covterm} after changing from the Carleman representation to the $\sigma$ representation as in \cite[Appendix]{gsNonCutJAMS}.   
\end{proof}

\section{Triple sum estimates for the trilinear form}
\label{sec:upTRI}

The point of this section is to prove Theorem \ref{mainTHM}.  To proceed, we first recall the necessary anisotropic Littlewood-Paley theory adapted to the geometry of the paraboloid, as developed in \cite{gsNonCutJAMS,gsNonCutA}.  After that, we perform the triple sum estimates of the trilinear form using the individual decomposed estimates from Section \ref{physicalDECrel}.

 The generalized Littlewood-Paley projections are given by
\begin{align*}
P_j f(v) & \eqdef \int_{\threed} dv' 2^{\ddim j} \varphi(2^j(\ext{v} - \ext{v'})) \ang{v'} f(v'), \quad j \ge 0,
\\
Q_j f(v) & \eqdef P_j f(v) - P_{j-1} f(v), \quad j \geq 1,
\end{align*}
where $\varphi$ is a $C^\infty$, radial function supported on the unit ball of $\R^\last$ chosen to satisfy various cancellation conditions (we refer to the discussion found in \cite{gsNonCutJAMS} for exactly what is needed) and $\ext{v} \eqdef (v,\frac{1}{2} |v|^2) \in \R^\last$ for any $v \in \threed$.
Informally, $P_j$ corresponds to the  projection onto frequencies at most $2^{j}$ and $Q_j$ corresponds to the  projection onto frequencies comparable to $2^{j}$ (recall that the frequency $2^{j}$ corresponds to the scale $2^{-j}$ in physical space).  We also define $Q_0 \eqdef P_0$. % to simplify notation.  
These are developed  in 
\cite{gsNonCutJAMS}.
%We are using Stein's convention \cite{MR0252961}.
%In order for these projections to be generally useful, the function $\varphi$ must be chosen to satisfy various cancellation conditions.  
%.
In particular, we  have  $P_j f(v) \rightarrow f(v)$ as $j \rightarrow \infty$ for all sufficiently smooth $f$. 
The principal reason for defining our Littlewood-Paley projections in this way is that the particular choice of paraboloid geometry allows us to control the associated square functions by our anisotropic norm.
Specifically, in our previous papers \cite{gsNonCutJAMS}, it was established that
\begin{equation}
 \sum_{j=0}^\infty 2^{2(s-i)j} \int_{\threed} dv \left| |\tilde{\nabla}|^i Q_j f (v)\right|^2 
 \ang{v}^{\gamma + 2s} 
\lesssim  |f|_{\nspace}^2,
\quad 
(i=0,1,2).
\label{lpsobolev0short}
\end{equation}
This and more general results were proven in \cite[Section 5 \& (5.6)]{gsNonCutJAMS}.  With that, 
we now establish Theorem \ref{mainTHM} via the triple summation estimates in Section \ref{sec:tSum}.
%the upper bounds \eqref{normupper} and \eqref{compactupper} 

%\newpage
\subsection{The main upper bound inequality}\label{sec:tSum}
We will write 
$
f =   \sum_{j=0}^\infty f_j
$
with the abbreviation
$
Q_j f \eqdef f_j,
$
and likewise for $h$.  Now we expand the trilinear form 
\begin{equation}
\ang{\mathcal{Q}(g, f),h} 
= 
\sum_{l=1}^\infty \sum_{j=0}^\infty   \ang{ f_{j+l}, \mathcal{Q}_g h_j} 
+
\sum_{l=0}^\infty \sum_{j=0}^\infty  \ang{\mathcal{Q}_g^* f_j,h_{j+l}}. 
\label{mainexpand}
\end{equation}
First consider the sum over $l$ of the terms $\ang{ f_{j+l}, \mathcal{Q}_g h_j}$ for fixed $j$.  We expand $\mathcal{Q}$ as a series by introducing the cutoff  terms $\opGplus$ and $\opGminus$ from \eqref{cutoffOP} as follows:
\begin{align}
 \sum_{l=1}^\infty \ang{ f_{j+l}, \mathcal{Q}_g h_j}
 & = 
 \sum_{k=-\infty}^\infty  \sum_{l=1}^\infty 
\left\{ \opGplus(f_{j+l},h_j) - \opGminus(f_{j+l},h_j) \right\} \nonumber \\
& = 
\sum_{k=-\infty}^j \sum_{l=1}^\infty  \left\{ \opGplus(f_{j+l},h_j) - \opGminus(f_{j+l},h_j) \right\}
\label{farsing} \\
& \hspace{30pt} 
+ 
\sum_{l=1}^\infty \sum_{k=j+1}^\infty \left\{ \opGplus(f_{j+l},h_j) - \opGminus(f_{j+l},h_j) \right\}. 
\label{maincancelf}
\end{align}
Now the order of summation may be rearranged at will since the estimates we employ  imply that the sum is absolutely convergent when $h$ and $f$ are Schwartz functions.  Regarding the terms \eqref{farsing}, the inequalities \eqref{tminussmall} and \eqref{tplussmall} dictate that
\[ 
\sum_{k=-\infty}^j  \left|  \opGplus(f_{j+l},h_j) - \opGminus(f_{j+l},h_j)  \right| 
\lesssim \Cupper 2^{2sj}
 \nsm f_{j+l}\nsm_{L^2_{\gamma+2s}} \nsm h_j\nsm_{L^2_{\gamma+2s}}. 
\]
One may now conclude by Cauchy-Schwartz
that
\begin{align*}
 \sum_{j=0}^\infty &
 \sum_{k=-\infty}^j  \! \! \left|  \opGplus(f_{j+l},h_j) - \opGminus(f_{j+l},h_j)  \right| 
 \lesssim 
 \Cupper 
  2^{-sl}
 \sum_{j=0}^\infty 
 2^{s(j+l)} \nsm f_{j+l}\nsm_{L^2_{\gamma+2s}} 
  2^{sj}\nsm h_j\nsm_{L^2_{\gamma+2s}}
 \\
  \lesssim &
  \Cupper
  2^{-sl}  \left| \sum_{j=0}^\infty 2^{2s(j+l)} \nsm f_{j+l}\nsm ^2_{L^2_{\gamma+2s}} \right|^{\frac{1}{2}} \left| \sum_{j=0}^\infty 2^{2sj} \nsm h_j\nsm ^2_{L^2_{\gamma+2s}} \right|^{\frac{1}{2}}
% \\
 \lesssim \Cupper 2^{-sl}  \nsm f\nsm_{\nspace} \nsm h\nsm_{\nspace}.
\end{align*} 
The  comparison of the square function norm to the norm $\nsm  \cdot \nsm_{\nspace}$ is provided by \eqref{lpsobolev0short}.  This estimate may clearly  be summed over $l \geq 0$.

To expand $\mathcal{Q}$ for the terms in \eqref{mainexpand} of the form 
$\ang{\mathcal{Q}_g^* f_j,h_{j+l}}$ 
%$\ang{\mathcal{Q}(g,f_j),h_{j+l}}$ 
with the operators $\opGplus$, $\opGstar$ and $\opGstarS$ from \eqref{opGlabel} and \eqref{defTKLcarl} we use an analogous argument as follows: 
\begin{align}
\sum_{l=0}^\infty \ang{\mathcal{Q}_g^* f_j,h_{j+l}}& =  \sum_{l=0}^\infty 
\opGstarS(f_j,h_{j+l}) 
+
 \sum_{k=-\infty}^\infty \sum_{l=0}^\infty 
%\left\{
(\opGplus - \opGstar)(f_j,h_{j+l})  
%\right\}
\nonumber \\
& = 
 \sum_{l=0}^\infty 
\opGstarS(f_j,h_{j+l}) 
%\label{sumO} 
%\\
%& \hspace{30pt} 
+ 
\sum_{k=-\infty}^j  \sum_{l=0}^\infty  
(\opGplus - \opGstar)(f_j,h_{j+l}) 
\label{sum1} 
\\
& \hspace{30pt} + 
\sum_{l=0}^\infty \sum_{k=j+1}^\infty 
\left(\opGplus - \opGstar\right)(f_j,h_{j+l})
\label{maincancelh}.
\end{align}
The estimates \eqref{tstarsmall} and \eqref{tplussmall} are 
 used to handle the terms in \eqref{sum1} involving the sum over $k\le j$
just as for  \eqref{farsing}, except that that the roles of $h$ and $f$ are now reversed.
To control the sum over $\opGstarS(f_j,h_{j+l})$  in  \eqref{sum1}, notice that
%\begin{multline*}
% \sum_{l=0}^\infty \sum_{j=0}^\infty  \sum_{k=-\infty}^\infty
%\left| \opGstarKS(f_j,h_{j+l}) \right|
% \lesssim 
%  \sum_{l=0}^\infty \sum_{j=0}^\infty 
%    2^{2sj}
%    \nsm f_j\nsm_{L^2_{\gamma+2s}}  
%\nsm h_{j+l}\nsm_{L^2_{\gamma+2s}}
%\\
% \lesssim 
%  \sum_{l=0}^\infty  2^{-sl} \sum_{j=0}^\infty ~ 2^{sj}\nsm f_j\nsm_{L^2_{\gamma+2s}}  
%2^{s(j+l)}\nsm h_{j+l}\nsm_{L^2_{\gamma+2s}}.
%\end{multline*}
$$
 \sum_{l=0}^\infty \sum_{j=0}^\infty 
\left| \opGstarS(f_j,h_{j+l}) \right|
 \lesssim  \Cupper
  \sum_{l=0}^\infty  2^{-sl} \sum_{j=0}^\infty ~ 2^{sj}\nsm f_j\nsm_{L^2_{\gamma}}  
2^{s(j+l)}\nsm h_{j+l}\nsm_{L^2_{\gamma}},
$$
which follows from Proposition \ref{opGstarEST}
and the inequality
$
1 \le 2^{2sj} = 2^{-sl}  2^{s(j+l)} 2^{sj}.
$
Again the Cauchy-Schwartz inequality  and \eqref{lpsobolev0short} yield the desired upper bound.

Lastly, consider 
\eqref{maincancelf}
and
\eqref{maincancelh}. 
%\begin{align} 
%\tilde{\mathcal{Q}}(g,f,h)  \eqdef 
%&  \sum_{l=0}^\infty \sum_{j=0}^\infty \sum_{k=j+1}^\infty \left\{\opGplus(f_{j+l},h_j) - \opGminus(f_{j+l},h_j)\right\} 
%\label{maincancelf} \\
% & + \sum_{l=1}^\infty \sum_{j=0}^\infty \sum_{k=j+1}^\infty \left\{ \opGplus(f_{j+l},h_j) - \opGstar(f_{j+l},h_j) \right\} . \label{maincancelh}
%\end{align}
%In other words, for  $\left| \ang{\mathcal{Q}(g,f),h} - \tilde{\mathcal{Q}}(g,f,h) \right|$ we have already established the upper bound in Theorem \ref{mainTHM}.
The  terms \eqref{maincancelf} are handled by \eqref{cancelf21}; we have 
$$
 \left| \left( \opGplus  - \opGminus \right)(f_{j+l},h_j) \right|
\lesssim 
\Cupper
2^{(2s-i)k} ~ \nsm  f_{j+l}\nsm_{L^2_{\gamma+2s}}  
\nsm |\tilde{\nabla}|^i h_j\nsm_{L^2_{\gamma+2s}}. 
$$
Now there is decay of the norm as $k \rightarrow \infty$ since $2s - i < 0$, so that
$$
 \sum_{k=j+1}^\infty   \left| \left( \opGplus  - \opGminus \right)(f_{j+l},h_j) \right|
 \lesssim 
 \Cupper
 2^{(2s-i)j} 
 \nsm  f_{j+l}\nsm_{L^2_{\gamma+2s}}  
\nsm |\tilde{\nabla}|^i h_j\nsm_{L^2_{\gamma+2s}}. 
$$
Again Cauchy-Schwartz is applied to the sum over $j$.  In this case $2^{(2s-i)j}$ is written as $2^{(s-i)j} 2^{s(j+l)} 2^{-sl}$; the first factor goes with $h$, the second with $f$, and the third remains for the sum over $l$.  Once again \eqref{lpsobolev0short} is employed.  
The desired bound for the trilinear term is completed by performing summation of the terms \eqref{maincancelh}.  The pattern of inequalities is the same, this time using \eqref{cancelh2g1}.  In particular, one has 
$$
 \left| \left(\opGplus - \opGstar\right)(f_j,h_{j+l}) \right|
\lesssim 
 \Cupper
2^{(2s-i)k}  \nsm |\tilde{\nabla}|^i  f_{j}\nsm_{L^2_{\gamma+2s}} \nsm h_{j+l} \nsm_{L^2_{\gamma+2s}}.
$$
Again, as in Proposition \ref{cancelHprop}, $i=1,2$ always satisfies $2s - i < 0$, leading to 
%the corresponding inequality for the sum over $k$:
$$
 \sum_{k=j+1}^\infty 
  \left| \left(\opGplus - \opGstar\right)(f_j,h_{j+l}) \right|
 \lesssim 
  \Cupper
2^{(2s-i)j}  \nsm |\tilde{\nabla}|^i  f_{j}\nsm_{L^2_{\gamma+2s}} \nsm h_{j+l} \nsm_{L^2_{\gamma+2s}}.
$$
The same Cauchy-Schwartz estimate is used for the sum over $j$; there  is exponential decay allowing the sum over $l$ to be estimated.  
The end result is Theorem \ref{mainTHM}.

\section{The coercive lower bound inequality}  %: \eqref{lowerSHARP}
\label{sec:mainCOER}

The goal of this section is to prove \eqref{mainlower} in Theorem \ref{mainLOWERthm}.  The main idea of the proof can be stated as follows:  the principal analytical difference between the left- and right-hand sides of \eqref{mainlower} is that (for fixed $v_*$) the variables $v$ and $v'$ are constrained relative to one another on the left-hand side, while on the right-hand side they essentially are not (they need only satisfy a distance inequality). To ``regularize'' the left-hand side and remove the constraint, we will exploit the fact that, for different values of $v_*$, the constraint (namely, the sphere) between $v'$ and $v$ changes. 
 We will employ an elementary but completely novel convolution-type argument which will allow us to exploit the changing constraint.  Loosely speaking, because the spheres are not static, the convolution-type operation will ``smear out'' the support of the integral and give something analogous to the right-hand side.

\subsection{Regularization}\label{sec:regul}

As is customary, the proof proceeds by a careful dyadic decomposition.
For any integer $k$, consider the set $\Omega_k=\Omega_k(v,v',v_*)$ given by
\[ 
\Omega_k \eqdef \set{ (v,v',v_*)}{ |v-v'| \leq 2^{-k} \mbox{ and } \ang{2 v' - v - v_*, v - v_*} \geq 0}.
\]
Furthermore the condition 
$\ang{2 v' - v - v_*, v - v_*} \geq 0$ is needed because of the support condition in \eqref{kernelQ}.  
We also use
 the quadratic functional given by
\begin{align*} 
I_k(f) & \eqdef \int_{\ballR} dv_* \int_{\threed} dv \int_{\dsp_{v_*}^v} d \sigma_{v'} ~ (f' - f)^2 ~ {\mathbf 1}_{\Omega_k} ~ g_*
~ |v-v_*|^{(n-1)+ \gamma + 2s}. 
\end{align*}
By the integral $d \sigma_{v'}$, we mean
$
\int_{\sph} d\sigma ~ \phi(v') = \int_{\dsp_{v_*}^v} d \sigma_{v'}  \phi(v'),
$
where the left-hand side is exactly as in \eqref{sigma}. 
In particular, note that $\dsp_{v_*}^{v}$ is the sphere
\begin{equation}
\dsp_{v_*}^{v} \eqdef \set{ w \in \threed }{ 0 = \ang{w-v, w - v_*} },
\label{sphereUN}
\end{equation}
%This notation is a slight departure from the usual convention, 
which is the unique sphere for which $v$ and $v_*$ are antipodal, having center $\frac{v+v_*}{2}$ and radius $\frac{|v-v_*|}{2}$.
%but it will be absolutely essential to identify the sphere in this way throughout the following arguments.
With \eqref{sinEXPest} and \eqref{cancellationSPLIT}
it is easy to check that
\begin{equation} 
N_g(f)
 \gtrsim \sum_{k=-\infty}^\infty 2^{k((n-1) + 2s)} I_k (f).
\label{mainlower0}
\end{equation}
%(in fact, there is comparability of both sides as well).  
This follows because
\[
b(\cos\theta) \gtrsim \left( \frac{|v-v_*|}{|v-v'|} \right)^{n-1+2s}
\]
by virtue of \eqref{kernelQ} and \eqref{sigma}; with this inequality, \eqref{mainlower0} follows from the observation
\[ 
\sum_{k=-\infty}^{\infty} 2^{k((n-1) + 2s)} ~ {\mathbf 1}_{\Omega_k} \lesssim |v-v'|^{-(n-1)-2s}, 
\]
since the sum on the left-hand side is a geometric series which terminates at some maximal $k$ satisfying $2^{k_{max}} \approx |v-v'|^{-1}$ (and the sum is comparable to the value of the largest term).
The goal will be to estimate the terms on the right side of \eqref{mainlower0} by something more directly comparable to our semi-norm \eqref{normDOTdef}.  To that end, let
\begin{align*}
w_k(v,v_*) \eqdef \int_{\dsp_{v_*}^v} d \sigma_{v'} {\mathbf 1}_{\Omega_k}  \lesssim 2^{-k(n-1)} |v-v_*|^{-(n-1)},
\end{align*}
(the estimate from above follows because the angle to the pole is comparable to $|v - v'| |v-v_*|^{-1}$). 
Extending our convention by defining
$
 \overline{g}_*
 \eqdef
 g( \overline{v}_*)
$
and
$
\overline{f}'
\eqdef
f(\overline{v}'),
$ 
Fubini's theorem guarantees that the quantity $I_k(f) %\nsm g \nsm_{L^1(\threed)}
\int_{\ballR} dv_* g_*$ is equal to both of the following integrals:
\begin{align*}
 & \int_{\threed} \! \! dv \!  \int_{\ballR} \! \! d \overline{v}_* \! \int_{\dsp_{\overline{v}_*}^v} \!  \! \! d \sigma_{\overline{v}'} \! \int_{\ballR} \!  \! dv_* \! \int_{\dsp_{v_*}^v} \!  \! \! d \sigma_{v'} (f' - f)^2 
 \frac{ {\mathbf 1}_{\Omega_k}  {\mathbf 1}_{\overline{\Omega}_k} }{w_k(v,\overline{v}_*)} ~ g_* \overline{g}_*  ~
 |v-v_*|^{(n-1)+\gamma + 2s}, 
 \\
 & \int_{\threed} \!  \! dv \! \int_{\ballR} \!  \! d \overline{v}_* \! \int_{\dsp_{\overline{v}_*}^v} \!  \! \! d \sigma_{\overline{v}'} \! \int_{\ballR} \!  \! dv_* \! \int_{\dsp_{v_*}^v} \!  \! \!  d \sigma_{v'} (\overline{f}' - f)^2 
 \frac{{\mathbf 1}_{\Omega_k}  {\mathbf 1}_{\overline{\Omega}_k}}{w_k(v,v_*)} ~ g_* \overline{g}_* ~
 |v - \overline{v}_*|^{(n-1)+\gamma+2s},
\end{align*}
where
$\overline{\Omega}_k=\Omega_k(v,\overline{v}',\overline{v}_*)$.
% we use the shorthand ${\mathbf 1}_{\overline{\Omega}_k}$ to refer to the indicator function of $\Omega_k$ evaluated on $(v,\overline{v}',\overline{v}_*)$ rather than on $(v,v',v_*)$.
We now bound $I_k(f) \int_{\ballR} dv_* g_*$ below by an integral whose integrand is the average of the two integrands above.  The elementary inequality 
\begin{equation}
     (f' - f)^2
 + 
 (\overline{f}' - f)^2
\geq \frac{1}{2}
   ( \overline{f}' - f')^2 ,
   \label{ineqUPPERfT}
\end{equation}
%$(a-b)^2 + (b-c)^2 \geq \frac{1}{2} (a-c)^2$ 
leads to the somewhat less elementary observation that 
\begin{multline}
%\begin{split}
 ( f' - f)^2  
 \frac{|v-v_*|^{(n-1)+\gamma+2s}}{w_k(v,\overline{v}_*)} 
 % {\mathbf 1}_{\Omega_k}{\mathbf 1}_{\overline{\Omega}_k}
 + 
 (\overline{f}' - f)^2 \frac{|v-\overline{v}_*|^{(n-1)+\gamma+2s} }{w_k(v,v_*)} 
% {\mathbf 1}_{\Omega_k}{\mathbf 1}_{\overline{\Omega}_k}
 \\
 \gtrsim 2^{k(n-1)}   \frac{\min \{|v-v_*|^{\gamma+2s}, |v-\overline{v}_*|^{\gamma+2s}\}}{(|v-v_*||v-\overline{v}_*|)^{-(n-1)}}   ( \overline{f}' - f')^2, 
 %{\mathbf 1}_{\Omega_k}{\mathbf 1}_{\overline{\Omega}_k},
%\end{split} 
\label{quadmin}
\end{multline}
where we have also used the estimate of $w_k$ from above.  

Since the quadratic difference now involves only $v'$ and $\overline{v}'$, the next  step is to perform a  change of variables {\it simultaneously} for both $v'$ and $\overline{v}'$ (motived by the Carleman-type representations) so that, when we integrate the right-hand side of \eqref{quadmin} over all the relevant variables, we may treat $v'$ and $\overline{v}'$ as being unconstrained at the price of requiring $v$ to simultaneously satisfy {\it two} constraints.
%This is given in the following general change of variables, called here the ``co-plane formulation''.
Let $H$ be any (Borel) measurable function of $v, v'$ and $\overline{v}'$.  Then
for any fixed $v_*$ and $\overline{v}_*$, with the notation $\dsp_{v_*}^{v}$
from \eqref{sphereUN} we have that
\begin{multline} \label{doubleCARL}
 \int_{\threed} \! dv   \int_{\dsp_{\overline{v}_*}^v} \! d \sigma_{\overline{v}'} \int_{\dsp_{v_*}^v} \! d \sigma_{v'}
 ~  H(v,v',\overline{v}') 
 \\
 = \int_{\threed} \! dv' \int_{\threed} \! d\overline{v}' \int_{E_{v',v_*}^{\overline{v}',\overline{v}_*}} \! d \pi_{v} \frac{H(v,v',\overline{v}')}{|v-v_*|^{n-2} |v - \overline{v}_*|^{n-2} D^{\frac{1}{2}}}. 
\end{multline}
%\begin{equation} \label{doubleCARL}
%\begin{split}
% \int_{\threed} \! dv  & \int_{\dsp_{\overline{v}_*}^v} \! d \sigma_{\overline{v}'} \int_{\dsp_{v_*}^v} \! d \sigma_{v'}
% ~  H(v,v',\overline{v}') \\
%& = \int_{\threed} \! dv' \int_{\threed} \! d\overline{v}' \int_{E_{v',v_*}^{\overline{v}',\overline{v}_*}} \! d \pi_{v} \frac{H(v,v',\overline{v}')}{|v-v_*|^{n-2} |v - \overline{v}_*|^{n-2} D^{\frac{1}{2}}}. 
%\end{split}
%\end{equation}
where 
\begin{equation}
D \eqdef |v' - v_*|^2 |\overline{v}' - \overline{v}_*|^2 - \ang{ v' - v_*, \overline{v}' - \overline{v}_*}^2,
%=
%|(v' - v_*) \times (\overline{v}' - \overline{v}_*)|^2,
 \label{quantityDdef}
\end{equation}
and $E_{v',v_*}^{\overline{v}',\overline{v}_*}$ is the co-plane (by which we mean an affine subspace of codimension 2) of points $v \in \threed$ satisfying the constraints
%\begin{equation}
%\begin{split}
% 0 & = \ang{v-v', v' - v_*}, \\
% 0 & = \ang{v-\overline{v}', \overline{v}' - \overline{v}_*},
%\end{split}
% \label{coPlaneEdef}
%\end{equation}
%
\begin{equation}
E_{v',v_*}^{\overline{v}',\overline{v}_*}
\eqdef 
\{v \in \threed : ~
 0  = \ang{v-v', v' - v_*}, ~
 0  = \ang{v-\overline{v}',  \overline{v}' - \overline{v}_*}
 \},
 \label{coPlaneEdef}
\end{equation}
and, as always, $d \pi_v$ is the Lebesgue measure on this co-plane.
Now consider the quantity $K_k=K_k(v',v_*, \overline{v}',\overline{v}_*)$ given by 
\begin{equation} 
K_k \eqdef   \frac{2^{k(n-1)}}{D^\frac{1}{2}} 
\int_{E_{v',v_*}^{\overline{v}',\overline{v}_*}} d \pi_v ~ \frac{ \min \{ |v -v_*|^{\gamma+2s}, |v - \overline{v}_*|^{\gamma+2s} \} }{|v-v_*|^{-1} |v-\overline{v}_*|^{-1}} {\mathbf 1}_{\Omega_k}{\mathbf 1}_{\overline{\Omega}_k}.
\label{newkern}
\end{equation}
By the co-plane change of variables \eqref{doubleCARL}, we may conclude that
\begin{equation} I_k(f) \int_{\ballR} dv_* g_* \gtrsim    
 \int_{\ballR}  dv_* ~ g_* 
 \int_{\ballR}  d \overline{v}_* ~ \overline{g}_*
 \int_{\threed} \!  \! d \overline{v}' \!  \! \int_{\threed} \!  \! d v' K_k (f' - \overline{f}')^2.
\label{itok}
\end{equation}
The goal at this point is, of course, to obtain a favorable estimate for $K_k$ in terms of the geometry of the various variables of integration.

\subsection{Estimation of \eqref{newkern}}
Regarding the function $D$ from \eqref{quantityDdef}, it is a useful fact to note that $D$ is a Gram determinant, 
and its value is equal to $4$ times the square of the area of the triangle with vertices $0$, $v' -v_*$ and $\overline{v}' - \overline{v}_*$.  Consequently we have two alternate formulas for $D$ which will also be useful:
\begin{align*}
D 
%& = |v' - v_*|^2 |\overline{v}' - \overline{v}_*|^2 - \left| \ang{ v' - v_*, \overline{v}' - \overline{v}_*} \right|^2 \\
   & = |v' - v_*|^2 |v' - \overline{v}' + \overline{v}_* - v_*|^2 - 
   \ang{v' - \overline{v}' + \overline{v}_* - v_* , v' - v_*}^2 
   \\
 & = |\overline{v}' - \overline{v}_*|^2 |v' - \overline{v}' + \overline{v}_* - v_*|^2 - 
 \ang{v' - \overline{v}' + \overline{v}_* - v_* , \overline{v}' - \overline{v}_*}^2.
\end{align*}
For the moment, let us make a series of assumptions (to be examined later).  To that end, we will consider 
$v'$, $v_*$, $\overline{v}'$, $\overline{v}_*$ as fixed and satisfying the inequalities
\begin{eqnarray}
\label{asump1} 
%\begin{split}
% |v_*|, |\overline{v}_*|  \leq R, 
%\\
 |\overline{v}_* - v_*| & \geq &\delta, 
% \label{asump2} 
%\end{split}
\\
D  &\geq & \delta^2 \left(|v' - v_*|^2 + |\overline{v}' - \overline{v}_*|^2\right), 
\label{asump3}
\end{eqnarray}
for some fixed constant $\delta>0$.   Now \eqref{mainlower} in Theorem \ref{mainLOWERthm} will be proved through a series of propositions estimating the various features of \eqref{newkern}.

\begin{proposition}
Assume that \eqref{asump1} and \eqref{asump3} hold for some quadruple $(v',v_*, \overline{v}',\overline{v}_*)$ with fixed $\delta$.  Then for all $k\ge k_0$, with $k_0 = k_0(n, R, \delta)\ge 0$, we have
\begin{equation}
 K_k \gtrsim 2^{k(n-1)} \ang{v'}^{\gamma + 2s + 1} \int_{E_{v',v_*}^{\overline{v}',\overline{v}_*}} d \pi_v 
 ~ {\mathbf 1}_{\Omega_k} ~ {\mathbf 1}_{\overline{\Omega}_k}. \label{estlengths}
\end{equation}
\end{proposition}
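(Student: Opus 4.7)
The plan is to derive \eqref{estlengths} from the pointwise lower bound
\[
\frac{|v-v_{*}|\,|v-\overline{v}_{*}|\,\min\!\big\{|v-v_{*}|^{\gamma+2s},\,|v-\overline{v}_{*}|^{\gamma+2s}\big\}}{D^{1/2}} \gtrsim \ang{v'}^{\gamma+2s+1}
\]
on the support of ${\mathbf 1}_{\Omega_{k}}{\mathbf 1}_{\overline{\Omega}_{k}}$ for $k\ge k_{0}(n,R,\delta)$; inserting this into the integrand of \eqref{newkern} yields \eqref{estlengths} at once. Writing $A=|v'-v_{*}|$ and $B=|\overline{v}'-\overline{v}_{*}|$, the argument breaks into three steps: replace $|v-v_{*}|$ and $|v-\overline{v}_{*}|$ by $A$ and $B$ multiplicatively, pin down $D^{1/2}$ in terms of $A$, $B$ and $\Delta := (\overline{v}'-v')+(v_{*}-\overline{v}_{*})$, and finish with a case split.

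The orthogonality $\ang{v-v',v'-v_{*}}=0$ built into the definition of the co-plane $E_{v',v_{*}}^{\overline{v}',\overline{v}_{*}}$ gives the Pythagorean relation $|v-v_{*}|^{2}=|v-v'|^{2}+A^{2}$, so on $\Omega_{k}$ (where $|v-v'|\le 2^{-k}$) one has $A\le|v-v_{*}|\le A+2^{-k}$, and analogously $B\le|v-\overline{v}_{*}|\le B+2^{-k}$. Once the lower bounds $A,B\ge\delta$ are established (as they will be in the next step), choosing $k_{0}$ so that $2^{-k_{0}}\le\delta/2$ delivers the multiplicative comparisons $|v-v_{*}|\approx A$ and $|v-\overline{v}_{*}|\approx B$.

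For $D$, substituting $\overline{v}'-\overline{v}_{*}=(v'-v_{*})+\Delta$ into $D=|v'-v_{*}|^{2}|\overline{v}'-\overline{v}_{*}|^{2}-\ang{v'-v_{*},\overline{v}'-\overline{v}_{*}}^{2}$ and letting the cross terms telescope produces the Gram-type identity $D=A^{2}|\Delta|^{2}-\ang{v'-v_{*},\Delta}^{2}$; the symmetric identity with $B$ in place of $A$ also holds, so dropping the negative term gives $D^{1/2}\le|\Delta|\min(A,B)$. On the joint support $|v'-\overline{v}'|\le 2^{-k+1}$ and $v_{*},\overline{v}_{*}\in\ballR$, hence $|\Delta|\le 2R+2^{-k+1}\lesssim R$. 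The matching lower bounds $A,B\ge\delta$ come from combining \eqref{asump3} with the Cauchy--Schwarz bound $D\le A^{2}B^{2}$: from $\delta^{2}A^{2}\le A^{2}B^{2}$ one obtains $B\ge\delta$, and symmetrically $A\ge\delta$.

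These estimates combine to yield $AB/D^{1/2}\ge\max(A,B)/|\Delta|\gtrsim\max(A,B)/R$, so the integrand is bounded below by a constant multiple of $\max(A,B)\min\{A^{\gamma+2s},B^{\gamma+2s}\}$. A short case split finishes the job: if $|v'|\ge 2R$, then $v_{*},\overline{v}_{*}\in\ballR$ together with $\overline{v}'\approx v'$ on the support gives $|v'|-R\le A,B\le|v'|+R+2^{-k+1}$, so $A\approx B\approx|v'|\approx\ang{v'}$, and $\max(A,B)\min\{A^{\gamma+2s},B^{\gamma+2s}\}\approx\ang{v'}^{\gamma+2s+1}$ in both signs of $\gamma+2s$; if $|v'|\le 2R$, then $\ang{v'}$ is bounded and $A,B\in[\delta,3R]$, so both sides of the target inequality lie in compact positive intervals determined by $n,R,\delta,\gamma,s$ and are therefore comparable. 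The main subtlety to watch is the sign flip in the meaning of $\min\{A^{\gamma+2s},B^{\gamma+2s}\}$ (which equals $\min(A,B)^{\gamma+2s}$ for $\gamma+2s\ge 0$ but $\max(A,B)^{\gamma+2s}$ for $\gamma+2s<0$); this is absorbed precisely because the cancellation $AB/D^{1/2}\gtrsim\max(A,B)/R$ always supplies a copy of $\max(A,B)$, which is the factor matching $\ang{v'}$ in both cases.
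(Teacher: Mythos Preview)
Your proof is correct and shares the paper's core structure: identify the Gram-type expansion $D = A^{2}|\Delta|^{2} - \ang{v'-v_{*},\Delta}^{2}$ (which is exactly the paper's alternate formula for $D$), use the Pythagorean relations on the co-plane to replace $|v-v_{*}|,|v-\overline{v}_{*}|$ by $A,B$, and then reduce to length comparisons. There is, however, one genuine point of departure worth noting. The paper first extracts $A \approx B$ from \eqref{asump3} and the Gram identity, and then invokes \eqref{asump1} via the triangle inequality $A + B \ge |v_{*}-\overline{v}_{*}| - |v'-\overline{v}'|$ to get the lower bound $A,B \gtrsim 1$. You bypass \eqref{asump1} entirely: combining \eqref{asump3} with the Cauchy--Schwarz bound $D \le A^{2}B^{2}$ gives $A,B \ge \delta$ directly, which is a cleaner and strictly more economical argument (it shows the proposition holds under \eqref{asump3} alone). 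Your final case split on $|v'|$ is then a substitute for the paper's uniform claim $A \approx B \approx \ang{v'}$; both reach the same conclusion, though the paper's route is a touch more streamlined once $A \approx B$ is in hand. One very minor imprecision: your stated lower bound $|v'|-R \le B$ in the case $|v'|\ge 2R$ should read $|v'|-R-2^{-k+1}$ (since $B$ involves $\overline{v}'$, not $v'$), but this is absorbed by taking $k_{0}$ large and does not affect the argument.
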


\begin{proof}
The inequality \eqref{estlengths} rests on a variety of length inequalities which are a consequence of the assumptions \eqref{asump1} and \eqref{asump3}.  First, assuming $k \geq 0$, on $\overline{\Omega}_k \cap \Omega_k$  we have that $|v' - \overline{v}'| \leq 2$.  By the lower bound for $D$, this implies that
\[ 
\delta^2 (|v' - v_*|^2 + |\overline{v}' - \overline{v}_*|^2)  \leq |\overline{v}' - \overline{v}_*|^2 |v' - \overline{v}' + \overline{v}_* - v_*|^2 \leq (2R+2)^2 |\overline{v}' - \overline{v}_*|^2,
\]
(where the bound from above for $D$ is obtained by dropping the negative inner product squared in the third representation of $D$ mentioned previously)
so that $|v' - v_*| \leq \delta^{-1} (2R + 2) |\overline{v}' - \overline{v}_*|$.  By symmetry, $ |\overline{v}' - \overline{v}_*| \leq \delta^{-1} (2R + 2)|v' - v_*|$ as well.  As long as $k$ is large enough that $2^{-k} \leq \frac{\delta}{4}$, then, we also have that
\[ 
|\overline{v}' - \overline{v}_*| + |v' - v_*| \geq |v_*  - \overline{v}_*| - |v' - \overline{v}'| \geq  \frac{\delta}{2}. 
\]
In particular, we may conclude that $|\overline{v}' - \overline{v}_*| \approx |v' - v_*| \gtrsim 1$ with constants depending only on $\delta$ and $R$.
So again, if $k$ is large enough (depending on $\delta$ and $R$) this means that any $v$ in 
$\Omega_k \cap \overline{\Omega}_k \cap E_{v',v_*}^{\overline{v}',\overline{v}_*}$ will satisfy $|v - v_*| \approx |v' - v_*| \approx |v-\overline{v}_*| \approx \ang{v'}$ (because in this case all of the lengths are bounded from below on 
\eqref{coPlaneEdef}, $|v-v'|$ is bounded from above, and $|v_*|, |\overline{v}_*|$ are bounded above as well).  Thus
\[ 
K_k \gtrsim \frac{2^{k(n-1)}}{D^{\frac{1}{2}}} \ang{v'}^{\gamma + 2s + 2} \int_{E_{v',v_*}^{\overline{v}',\overline{v}_*}} d \pi_v  ~{\mathbf 1}_{\Omega_k}~{\mathbf 1}_{\overline{\Omega}_k}, 
\]
uniformly (where the constant depends on the dimension, $R$, and $\delta$ only).  Finally, using the bound $D \leq (2R+2)^2 |v' - v_*|^2$
 (obtained by keeping only the product of squared lengths and neglecting the squared inner product) gives \eqref{estlengths}.
%\[ K_k \gtrsim 2^{k(n-1)} \ang{v'}^{\gamma + 2s + 1} \int_{E_{v',v_*}^{\overline{v}',\overline{v}_*}} d \pi_v  {\mathbf 1}_{\Omega_k}{\mathbf 1}_{\overline{\Omega}_k}. \]
\end{proof}

\begin{lemma}
\label{ballprop} Under the assumptions \eqref{asump1} and \eqref{asump3} for some fixed quadruple of points $(v',v_*, \overline{v}',\overline{v}_*)$, there is a constant $\varepsilon>0$ and a $k_0$ such that the additional constraint $d(v',\overline{v}') \leq 2^{-k} \varepsilon$ for any $k \geq k_0$ implies that the set
\[ 
\set{v \in E_{v',v_*}^{\overline{v}',\overline{v}_*} }{ |v-v'| \leq 2^{-k} \mbox{ and } |v - \overline{v}'| \leq 2^{-k}}, 
\]
is nonempty and contains a {\it Euclidean} ball of radius $\frac{3}{4} 2^{-k}$.
\end{lemma}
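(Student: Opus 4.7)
The plan is to find an explicit center $v_0 \in E_{v',v_*}^{\overline{v}',\overline{v}_*}$ such that both $|v_0 - v'|$ and $|v_0 - \overline{v}'|$ are at most $\tfrac{1}{4}\cdot 2^{-k}$; once this is achieved, the Euclidean $(n-2)$-dimensional ball of radius $\tfrac{3}{4}\cdot 2^{-k}$ about $v_0$ inside the coplane lies in the desired set by the triangle inequality, and nonemptiness is immediate.

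I would take $v_0$ to be the orthogonal projection of $v'$ onto $E_{v',v_*}^{\overline{v}',\overline{v}_*}$. Since $v'$ already satisfies the first defining equation of the coplane, its projection correction lies in the two-dimensional span of $v'-v_*$ and $\overline{v}'-\overline{v}_*$; writing $v_0 = v' + \alpha(v'-v_*) + \beta(\overline{v}'-\overline{v}_*)$ and imposing both coplane equations yields a $2\times 2$ linear system whose determinant is precisely the Gram determinant $D$ from \eqref{quantityDdef}. A short computation by Cramer's rule gives
\[
|v_0 - v'|^2 = \frac{\ang{v' - \overline{v}',\,\overline{v}' - \overline{v}_*}^2 \,|v'-v_*|^2}{D},
\]
and invoking assumption \eqref{asump3} in the form $\sqrt{D}\ge \delta|v'-v_*|$ at once produces $|v_0 - v'| \le \delta^{-1}\,|\ang{v'-\overline{v}',\,\overline{v}'-\overline{v}_*}|$.

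The main obstacle is bounding $|\ang{v'-\overline{v}',\,\overline{v}'-\overline{v}_*}|$ by a multiple of $d(v',\overline{v}')$, since a naive Cauchy--Schwartz estimate would retain the factor $|\overline{v}'-\overline{v}_*|$ which is uncontrolled as $\overline{v}'\to\infty$. The key is to expand via polarization
\[
\ang{v'-\overline{v}',\,\overline{v}'} = \tfrac{1}{2}\bigl(|v'|^2 - |\overline{v}'|^2\bigr) - \tfrac{1}{2}|v'-\overline{v}'|^2,
\]
and to split off $\ang{v' - \overline{v}',\,\overline{v}_*}$, which is bounded by $R\,|v'-\overline{v}'|$ because $\overline{v}_*\in\ballR$. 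This gives
\[
|\ang{v'-\overline{v}',\,\overline{v}'-\overline{v}_*}| \le \tfrac{1}{2}\bigl||v'|^2-|\overline{v}'|^2\bigr| + \tfrac{1}{2}|v'-\overline{v}'|^2 + R\,|v'-\overline{v}'|,
\]
and both principal terms are now absorbed by the definition of $d$: the quadratic piece $\tfrac{1}{4}(|v'|^2-|\overline{v}'|^2)^2$ built into $d^2$ controls the first term, while the Euclidean piece bounds the remainder. This is precisely the step where the anisotropy of the paraboloid geometry becomes essential.

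Combining, one obtains $|v_0 - v'|\lesssim_{R,\delta} 2^{-k}\varepsilon$, so taking $\varepsilon$ small (and $\le 1/8$) depending only on $n,R,\delta$ forces $|v_0 - v'| \le \tfrac{1}{8}\cdot 2^{-k}$, whence the triangle inequality yields $|v_0 - \overline{v}'| \le |v_0-v'|+|v'-\overline{v}'| \le \tfrac{1}{4}\cdot 2^{-k}$. The role of $k_0$ in the statement is to ensure that the lower-order quadratic correction $\tfrac{1}{2}|v'-\overline{v}'|^2$ is dominated by its linear counterpart, which holds as soon as $2^{-k}\varepsilon\le 1$.
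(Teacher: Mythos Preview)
Your proposal is correct and follows essentially the same approach as the paper: both take the orthogonal projection of $v'$ onto the coplane as the center, compute the identical distance formula $|v_0-v'|^2 = |v'-v_*|^2\ang{v'-\overline{v}',\overline{v}'-\overline{v}_*}^2/D$, expand the inner product via polarization to reveal $\tfrac{1}{2}(|v'|^2-|\overline{v}'|^2)$, invoke \eqref{asump3} to cancel the $|v'-v_*|$ factor, and finish with the same triangle-inequality argument. Your presentation of the projection step (noting $v'$ already lies on one hyperplane and solving the $2\times2$ Gram system) is slightly more explicit than the paper's direct verification of the formula for $w$, but the content is the same.
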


\begin{proof}
The objective at this point is to show that, when $v'$ and $\overline{v}'$ are close in the anisotropic sense, then the co-plane above passes near to these points in the {\it isotropic} sense.  To that end, consider the temporary definitions
\[ 
u \eqdef v' - v_*,
\quad
 \mbox{ and }
\quad
 \overline{u} \eqdef \overline{v}'- \overline{v}_*. 
\]
Note that \eqref{quantityDdef} is given simply by the expression
$
D = |u|^2 |\overline{u}|^2 - \ang{u,\overline{u}}^2. 
$
It is a straightforward exercise in linear algebra 
to verify that the vector $w$ given by
\begin{equation}
w \eqdef v' + \frac{\ang{\overline{v}'-v',\overline{u}}}{D} \left( - \ang{u, \overline{u}} u + |u|^2 \overline{u} \right), 
\label{wDEFmin}
\end{equation}
must lie in the co-plane $E_{v',v_*}^{\overline{v}',\overline{v}_*}$ from \eqref{coPlaneEdef} (in fact, $w$ is the point in the co-plane of minimal Euclidean distance to $v'$).  The distance of this point $w$ to $v'$ is given by
\begin{multline*}
 |w-v'|^2  = \left| \frac{\ang{\overline{v}'-v',\overline{u}}}{D} \right|^2 
 \left( \ang{u,\overline{u}}^2 |u|^2 - 2 \ang{u,\overline{u}} |u|^2 \ang{u,\overline{u}}  + |u|^2 |u|^2 |\overline{u}|^2\right) 
 \\
  = \frac{|u|^2 \ang{\overline{v}'-v',\overline{u}}^2}{D}.
\end{multline*}
Expanding this inner product gives
\[ \ang{\overline{v}' - v', \overline{v}' - \overline{v}_*} = \frac{1}{2} (|v'|^2 - |\overline{v}'|^2 - |\overline{v}'-v'|^2) - \ang{\overline{v}' - v', \overline{v}_*}; \]
consequently, if $d(v', \overline{v}') \leq 1$ then it follows that
\[ 
|w - v'|^2 \lesssim \frac{|u|^2 \ang{\overline{v}_*}^2 (d (v', \overline{v}'))^2}{D} \lesssim (d(v',\overline{v}'))^2, 
\]
where the final constant depends on $\delta$ and $R$.

The crucial point is that if $d(v',\overline{v}') \leq \varepsilon 2^{-k}$ for  $\varepsilon=\varepsilon(n,\delta,R)>0$ sufficiently small, then $w$ will be within $\frac{1}{8} 2^{-k}$ of $v'$: $|w-v'| \leq \frac{1}{8} 2^{-k}$.  We may also choose $\varepsilon$ so that $|v' - \overline{v}'| \leq \frac{1}{8} 2^{-k}$, whence the triangle inequality guarantees $|\overline{v}' - w| \leq \frac{1}{4} 2^{-k}$.  Another application of the triangle inequality shows that any $v \in E_{v',v_*}^{\overline{v}',\overline{v}_*}$  satisfying $|v - w| < \frac{3}{4} 2^{-k}$ will be forced to satisfy both $|v - v'| < 2^{-k}$ and $|v - \overline{v}'| < 2^{-k}$.
\end{proof}

\begin{proposition} \label{finalk}
Under the same hypotheses as Lemma \ref{ballprop}, we have the lower bound
\begin{equation} K_k \gtrsim 2^{k} \ang{v'}^{\gamma+2s+1} {\mathbf 1}_{d(v',\overline{v}') \leq \varepsilon 2^{-k}}, 
\notag
\end{equation}
uniformly for all $k\ge k_0$, where $k_0 = k_0(n,\delta,R)>0$ and  $\varepsilon = \varepsilon(n,\delta,R)>0$.
\end{proposition}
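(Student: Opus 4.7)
The plan is to combine the previous proposition's estimate \eqref{estlengths} with the geometric ball-containment provided by Lemma \ref{ballprop}. Starting from
$$ K_k \gtrsim 2^{k(n-1)} \ang{v'}^{\gamma+2s+1} \int_{E_{v',v_*}^{\overline{v}',\overline{v}_*}} d\pi_v ~ \mathbf{1}_{\Omega_k} \mathbf{1}_{\overline{\Omega}_k}, $$
the only remaining task is to show that the integral of $\mathbf{1}_{\Omega_k} \mathbf{1}_{\overline{\Omega}_k}$ over the $(n-2)$-dimensional co-plane is bounded below by a constant times $2^{-k(n-2)}$ whenever $d(v',\overline{v}') \leq \varepsilon 2^{-k}$. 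This would immediately yield the desired bound $K_k \gtrsim 2^k \ang{v'}^{\gamma+2s+1}$.

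First, Lemma \ref{ballprop} provides a Euclidean ball $B \subset E_{v',v_*}^{\overline{v}',\overline{v}_*}$ of radius $\frac{3}{4} 2^{-k}$ on which both $|v-v'|<2^{-k}$ and $|v-\overline{v}'|<2^{-k}$. The $(n-2)$-dimensional Lebesgue measure of this ball is clearly $\gtrsim 2^{-k(n-2)}$, so the job reduces to verifying that the remaining inner-product conditions defining $\Omega_k$ and $\overline{\Omega}_k$ are automatically satisfied on $B$ for $k$ sufficiently large. For a point $v \in E_{v',v_*}^{\overline{v}',\overline{v}_*}$, the orthogonality $\ang{v-v',v'-v_*}=0$ gives the algebraic identity
$$ \ang{2v'-v-v_*,v-v_*} = |v'-v_*|^2 - |v-v'|^2, $$
(by writing $2v'-v-v_* = (v'-v)+(v'-v_*)$ and $v-v_* = (v-v')+(v'-v_*)$ and expanding). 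The analogous identity holds for the barred variables on $E_{v',v_*}^{\overline{v}',\overline{v}_*}$.

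Now from the length estimates established in the proof of the preceding proposition, under \eqref{asump1} and \eqref{asump3} one has $|v'-v_*| \approx |\overline{v}'-\overline{v}_*| \gtrsim 1$ with constants depending only on $\delta$ and $R$. Since on $B$ we have $|v-v'|, |v-\overline{v}'| \leq 2^{-k}$, choosing $k_0$ large enough (depending on $n,\delta,R$) guarantees $|v'-v_*|^2 - |v-v'|^2 \geq 0$ and $|\overline{v}'-\overline{v}_*|^2 - |v-\overline{v}'|^2 \geq 0$ on all of $B$. Hence both $\mathbf{1}_{\Omega_k}$ and $\mathbf{1}_{\overline{\Omega}_k}$ equal $1$ on $B$, which completes the argument.

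The only nontrivial step here is the identification of the inner-product factor appearing in the definition of $\Omega_k$ with the scalar $|v'-v_*|^2-|v-v'|^2$ on the co-plane; once this is noted, the conclusion follows mechanically from the preceding lemma and length bounds. The main conceptual obstacle in the whole regularization scheme has already been dispatched by Lemma \ref{ballprop} (namely, that anisotropic proximity $d(v',\overline{v}') \leq \varepsilon 2^{-k}$ implies isotropic proximity of the co-plane to both $v'$ and $\overline{v}'$), so no further delicate estimate is needed.
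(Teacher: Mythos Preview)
Your proof is correct and follows essentially the same approach as the paper: you invoke \eqref{estlengths}, use Lemma \ref{ballprop} to produce the ball in the co-plane, verify the inner-product positivity via the identity $\ang{2v'-v-v_*,v-v_*}=|v'-v_*|^2-|v-v'|^2$ (and its barred analogue) together with the length bounds $|v'-v_*|\approx|\overline{v}'-\overline{v}_*|\gtrsim 1$, and then read off the $(n-2)$-dimensional volume $\gtrsim 2^{-k(n-2)}$. The paper's proof proceeds identically.
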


\begin{proof}
When $d(v', \overline{v}') \leq \varepsilon 2^{-k}$, then Lemma \ref{ballprop} identifies a Euclidean ball of radius comparable to $2^{-k}$ which lies at a distance strictly less than $2^{-k}$ to both the points $v'$ and $\overline{v}'$.  This is nearly sufficient to assert that this ball lies in the intersection $\Omega_k \cap \overline{\Omega}_k$, but there is a second constraint to satisfy: namely, the positivity of the relevant inner products.  For any $v$ in this ball, on 
 \eqref{coPlaneEdef}
 we have 
\begin{align*} 
\ang{2v' - v - v_*, v - v_*} & = \ang{v' - v_* - (v - v'), v' - v_* + (v - v')} \\
& = |v' - v_*|^2 - |v - v'|^2.
\end{align*}
Again as long as $k$ is sufficiently large, this quantity will be positive %for all $k$ 
because $|v' - v_*|^2$ is bounded below.  By symmetry, $\ang{2\overline{v}' - v - \overline{v}_*, v - \overline{v}_*}$ will be positive in this case as well.
Consequently, for any fixed $v',\overline{v}'$ with $d(v',\overline{v}') \leq \varepsilon 2^{-k}$, the ball of points $v \in E_{v',v_*}^{\overline{v}',\overline{v}_*}$ identified in Lemma \ref{ballprop} 
will be contained in $\Omega_k \cap \overline{\Omega}_k$:
\[ 
{\mathbf 1}_{\Omega_k}{\mathbf 1}_{\overline{\Omega}_k}
=
{\mathbf 1}_{\Omega_k}(v,v',v_*) {\mathbf 1}_{\Omega_k}(v,\overline{v}',\overline{v}_*) \geq {\mathbf 1}_{|v - w| < \frac{3}{4} 2^{-k}}. 
\]
Here $w$ is defined in \eqref{wDEFmin}.
Since the measure of this set in $E_{v',v_*}^{\overline{v}',\overline{v}_*}$ is comparable to $2^{-(n-2)k}$ (it is codimension $2$ in $\threed$), we have arrived at the conclusion
\[ 
K_k \gtrsim 2^{k(n-1)} \ang{v'}^{\gamma+2s+1} 2^{-k(n-2)} {\mathbf 1}_{d(v',\overline{v}') \leq \varepsilon 2^{-k}}, 
\]
uniformly for all $k\ge k_0$, which is exactly the desired lower bound for \eqref{newkern}.  
\end{proof}

In light of \eqref{itok} and Proposition \ref{finalk} we may now establish a temporary version of \eqref{mainlower}.
In particular, by \eqref{itok} and Proposition \ref{finalk} 
we have that, for all $k \geq k_0$, $I_k(f) |g|_{L^1(B_R)}$ is bounded uniformly below by
\begin{equation}
 2^k \int_{\threed}   d \overline{v}' \int_{\threed}  d v'  ~ (f' - \overline{f}')^2
 ~ \ang{v'}^{\gamma+2s+1}  {\mathbf 1}_{d(v',\overline{v}') \leq \varepsilon 2^{-k}}
~\AGthing (v',\overline{v}'), \label{mainbelow}
\end{equation}
with
\[
  \AGthing
  \eqdef
\int_{\ballR}  d \overline{v}_* \int_{\ballR}  dv_*
 ~ g_* \overline{g}_* {\mathbf 1}_G.
\]
By \eqref{mainlower0}, one may estimate $N_g(f)~  \nsm g \nsm_{L^1(\ballR)}$ from below by multiplying \eqref{mainbelow} by $2^{((n-1) + 2s)k}$ and summing over $k$.
%\begin{equation}
%N_g(f)~  \nsm g \nsm_{L^1(\ballR)}
% \gtrsim    \sum_{k=k_0}^\infty 2^{k(n + 2s)}  ~ \tilde{J}_{\varepsilon 2^{-k}}(f).
%\label{lowerMAINsum}
%\end{equation}
%This is really the heart of the matter.
Interchanging summation and integration of the terms \eqref{mainbelow}, we will next employ the inequality
\begin{equation}
\sum_{k=k_0}^\infty  2^{k(n + 2s)}   {\mathbf 1}_{d(v', \overline{v}') \leq \varepsilon 2^{-k}}  
%\sum_{k \geq k_0} 2^{k((n-1) + 2s)} 2^{k} & % \ang{v'}^{\gamma+2s+1} {\mathbf 1}_{d(v',\overline{v}') \leq \varepsilon 2^{-k}} 
  \gtrsim  d(v',\overline{v}')^{-n-2s} {\mathbf 1}_{d(v',\overline{v}') \leq 2^{-k_0} \varepsilon},
\label{lowersumineq}
\end{equation}
which follows because, as before, the sum on the left is a {\it finite} geometric series (for any pair $v', \overline{v}'$, the characteristic functions are zero if $2^{-k} \varepsilon > d(v', \overline{v}')$).  The sum of a (nontrivial) geometric series is comparable to its largest term, which, in this case, satisfies $2^{-k_{max}} \approx d(v',\overline{v}')$.  
Thus by \eqref{lowersumineq} it follows that
%This  establishes \eqref{lowersumineq}; that and  \eqref{lowerMAINsum} yield
\begin{equation}
N_g(f)~  \nsm g \nsm_{L^1(\ballR)}
  \gtrsim 
  \int_{\threed} \!  \! d \overline{v}'  \! \int_{\threed} \!  \! d v' \frac{(f' - \overline{f}')^2}{d(v',\overline{v}')^{n+2s}} \ang{v'}^{\gamma+2s+1}  {\mathbf 1}_{d(v',\overline{v}') \leq 2^{-k_0}\varepsilon}
 ~ \AGthing.
\label{lowerMAINsum}
\end{equation}
The proof of \eqref{mainlower} would be complete if we had a uniform positive lower bound for  $\AGthing(v',\overline{v}')$, and if we could  replace $d(v',\overline{v}') \leq 2^{-k_0} \varepsilon$ with  $d(v',\overline{v}') \leq 1$ in the above.  Closing both of these issues is the content of the remainder of this section.

\subsection{Simplifying the geometry}
The next task at hand is to gain a better geometric understanding of  \eqref{asump3}. We seek to replace this somewhat obscure condition with a more intuitive one.  Specifically, we will replace \eqref{asump1} and \eqref{asump3} with:
\begin{eqnarray}
\label{asump4}
%\begin{split}
% |v_*|, ~ |\overline{v}_*|  \leq R, 
%\\
%\label{asump5} 
|v' - v_*| & \geq & \delta>0, 
%\end{split}
\\
\label{asump6} 
|v' - v_*|^2 |v' - \overline{v}_*|^2 - \ang{v' - v_*, v' - \overline{v}_*}^2  &\geq & \delta^2 |v' - v_*|^2.
\end{eqnarray}
%Recall that $v_*$ and $\overline{v}_*$ lie in $\ballR$.
These new conditions have simple geometric interpretations: the distance from $v'$ to $v_*$ is at least $\delta$, and $\overline{v}_*$ lies outside a tube of radius $\delta$ around the line through $v'$ and $v_*$.

Note also that \eqref{asump6} itself implies $|v' - \overline{v}_*| \geq \delta$ as well (which may be seen by neglecting the squared inner product on the left-hand side). 
Likewise, the equality
%% \[ 
%% |(v' - v_*) \times (v' - \overline{v}_*)|^2
%% = 
%% |(v_* - \overline{v}_* ) \times ( v' - v_*)|^2,
%% \]
\[ 
|v' - v_*|^2 |v' - \overline{v}_*|^2 - \ang{v' - v_*, v' - \overline{v}_*}^2 
= 
|v_* - \overline{v}_*|^2 |v' - v_*|^2 - \ang{v_* - \overline{v}_*,v' - v_*}^2, 
\]
guarantees that $|v_* - \overline{v}_*| \geq \delta$ as well.  Now since $|v' - \overline{v}_*|$ and $|v' - v_*|$ are bounded below and the distances $|v_* - \overline{v}_*|$ and $|v' - \overline{v}'|$ are bounded above, it follows that 
$$
|v' - \overline{v}_*| \approx |\overline{v}' - \overline{v}_*| \approx |v' - v_*| \approx |\overline{v}' - v_*| \approx \ang{v'},
$$ 
when $d(v',\overline{v}') \ll 1$; all of the implicit constants will  depend on $\delta$ and $R$.

\begin{proposition}\label{changeasump}
The inequalities \eqref{asump4} and \eqref{asump6} imply assumptions \eqref{asump1} and \eqref{asump3} 
(with a different $\delta>0$) provided $d(v',\overline{v}') \leq \varepsilon$ 
for some small $\varepsilon=\varepsilon(\delta,R)$. 
\end{proposition}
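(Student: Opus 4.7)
The geometric content of \eqref{asump6} is that $\overline{v}_*$ sits outside a $\delta$-tube around the line through $v_*$ and $v'$: indeed, $|v'-v_*|^2|v'-\overline{v}_*|^2 - \langle v'-v_*, v'-\overline{v}_*\rangle^2$ equals four times the squared area of the triangle with vertices $v', v_*, \overline{v}_*$, so dividing by $|v'-v_*|^2$ gives the squared distance from $\overline{v}_*$ to the line through $v_*, v'$. The plan is to translate this geometric information into \eqref{asump1} directly, and then use a perturbation argument to pass from \eqref{asump6} (which involves $v_*, \overline{v}_*, v'$) to \eqref{asump3} (which replaces one $v'$ by $\overline{v}'$) at the cost of requiring $|v'-\overline{v}'|$ small.

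\textbf{Step 1: Establishing \eqref{asump1}.} Setting $u=v'-v_*$ and $\overline{u}=v'-\overline{v}_*$, I would use the Gram-determinant identity
\[
|u|^2|\overline{u}|^2 - \langle u,\overline{u}\rangle^2 = |u|^2|u-\overline{u}|^2 - \langle u, u-\overline{u}\rangle^2 \leq |u|^2 |u-\overline{u}|^2 = |u|^2 |v_*-\overline{v}_*|^2,
\]
where the inequality is Cauchy--Schwarz. Combining with \eqref{asump6}, which asserts that the left side is at least $\delta^2|u|^2$, gives $|v_*-\overline{v}_*|\geq \delta$, which is \eqref{asump1}.

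\textbf{Step 2: Establishing \eqref{asump3}.} Let $\eta := \overline{v}'-v'$, and write $D_0 := |u|^2|\overline{u}|^2 - \langle u,\overline{u}\rangle^2 \geq \delta^2|u|^2$ by \eqref{asump6}. Expanding the defining expression for $D$ yields
\[
D - D_0 = 2\langle \eta,\, |u|^2\overline{u} - \langle u,\overline{u}\rangle u\rangle + \bigl(|u|^2|\eta|^2 - \langle u,\eta\rangle^2\bigr).
\]
A direct computation shows $\bigl||u|^2\overline{u}-\langle u,\overline{u}\rangle u\bigr|^2 = |u|^2 D_0$, so Cauchy--Schwarz plus nonnegativity of the last parenthesis gives $D \geq D_0 - 2|\eta||u|\sqrt{D_0}$. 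Since $\sqrt{D_0}\geq \delta|u|$ and $|\eta|\leq d(v',\overline{v}')\leq \varepsilon$, choosing $\varepsilon\leq \delta/4$ forces $D\geq D_0/2\geq (\delta^2/2)|v'-v_*|^2$. To also dominate $|\overline{v}'-\overline{v}_*|^2$, I would use $v_*,\overline{v}_*\in \ballR$ and $|v'-v_*|\geq \delta$ to get
\[
|\overline{v}'-\overline{v}_*| \leq |v'-v_*| + |v_*-\overline{v}_*| + |\eta| \leq |u| + 2R + \varepsilon \leq C_{R,\delta}|u|,
\]
from which $D\gtrsim (\delta')^2(|v'-v_*|^2 + |\overline{v}'-\overline{v}_*|^2)$ for some $\delta'=\delta'(\delta,R)>0$.

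\textbf{Main obstacle.} The delicate step is the perturbation estimate in Step 2: turning the strict geometric non-degeneracy of the triple $(v_*, \overline{v}_*, v')$ in \eqref{asump6} into non-degeneracy of the quadruple with $\overline{v}'$ replacing $v'$ in one slot. The algebraic identity $\bigl||u|^2\overline{u}-\langle u,\overline{u}\rangle u\bigr|^2 = |u|^2 D_0$ is what makes the linearization clean, but one must be careful that the bound $D\geq D_0/2$ be uniform in $v'$, which can be arbitrarily large. This is why $|v'-v_*|\geq \delta$ (and hence $|u|\geq \delta$) is used twice: first to absorb the perturbation in $\sqrt{D_0}\gtrsim \delta|u|$, and second to bound $|\overline{v}'-\overline{v}_*|$ by a multiple of $|v'-v_*|$ using the $R$-boundedness of $v_*, \overline{v}_*$.
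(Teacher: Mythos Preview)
Your proof is correct. Step~1 coincides with the argument the paper gives in the paragraph preceding the proposition. Step~2, however, takes a genuinely different algebraic route from the paper's own proof, and it is worth recording the contrast.

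The paper works with the alternate Gram form $D = |u|^2 |v'-\overline{v}'+\overline{v}_*-v_*|^2 - \langle v'-\overline{v}'+\overline{v}_*-v_*, u\rangle^2$ and compares it to the rewritten \eqref{asump6}, namely $|u|^2|\overline{v}_*-v_*|^2 - \langle \overline{v}_*-v_*,u\rangle^2$; the perturbation is therefore in the \emph{short} vector $\overline{v}_*-v_*$. The resulting cross term contains $\langle v'-\overline{v}', v'-v_*\rangle$, which is dangerous because $|v'-v_*|$ is unbounded; the paper tames it via the identity $\langle v'-\overline{v}',v'\rangle = \tfrac{1}{2}(|v'|^2-|\overline{v}'|^2) + \tfrac{1}{2}|v'-\overline{v}'|^2$, which is exactly where the anisotropic metric $d(v',\overline{v}')$ enters. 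Your expansion instead perturbs the \emph{long} vector $\overline{u}=v'-\overline{v}_*$, and the linear term becomes $2\langle \eta,\, |u|^2\overline{u}-\langle u,\overline{u}\rangle u\rangle$. The identity $\bigl||u|^2\overline{u}-\langle u,\overline{u}\rangle u\bigr|^2 = |u|^2 D_0$ says this vector is the $u$-orthogonal component of $|u|^2\overline{u}$, so its length is only $|u|\sqrt{D_0}$ rather than the naive $|u|^2|\overline{u}|$. This orthogonality is what lets you close with just $|\eta|\le \delta/4$, never invoking the quadratic part of $d(v',\overline{v}')$. Both arguments end identically, using $|v'-v_*|\ge\delta$ and $v_*,\overline{v}_*\in\ballR$ to get $|\overline{v}'-\overline{v}_*|\lesssim |v'-v_*|$.

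In short: your route is slightly more economical (it needs only the isotropic smallness $|v'-\overline{v}'|\le\varepsilon$), while the paper's route makes the role of the paraboloid geometry more visible.
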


\begin{proof}
It remains only to verify \eqref{asump3}.  Note that \eqref{asump6} coincides with \eqref{asump3} when $\overline{v}' = v'$.  It thus suffices to estimate the difference when going from $v'$ to $\overline{v}'$ in the appropriate places.
First, the change in the squared length $|v' - \overline{v}' + \overline{v}_* - v_*|^2$ is
\begin{align*}
 \left| |v' - \overline{v}' + \overline{v}_* - v_*|^2 - |\overline{v}_* - v_*|^2 \right| & \leq  |v' - \overline{v}'|^2 + 2 |v' - \overline{v}'|| \overline{v}_* - v_*| \\
& \lesssim |v' - \overline{v}'|,
\end{align*}
where we use the inequalities $|v'-\overline{v}'| \leq 1$ and $|v_* - \overline{v}_*| \leq 2R$.  
Next, we observe the equality
\begin{align*}
 \ang{v' - \overline{v}', v' - v_*}  & =  \ang{v' - \overline{v}',v'} - \ang{v' - \overline{v}',v_*}   
 \\
&  =  \frac{1}{2} ( |v'|^2 - |\overline{v}'|^2 + |v' - \overline{v}'|^2 ) - \ang{v' - \overline{v}',v_*},
\end{align*}
meaning that $|\ang{v' - \overline{v}', v' - v_*}| \lesssim d(v',\overline{v}')$.  In particular, this implies
\begin{align*}
\left| \vphantom{1^2} \right.  \left< \right. v' - \overline{v}' + & \overline{v}_* - v_*,  v' - v_* \left. \right>^2 - \left. \ang{\overline{v}_* - v_*, v' - v_* }^2 \right| \\ 
& = \left| \ang{v' - \overline{v}', v' - v_*}  \right| \left| \ang{v' - \overline{v}' + \overline{v}_* - v_*, v' - v_*} + \ang{\overline{v}_* - v_*, v' - v_* } \right| \\
& \lesssim |v' - v_*| d(v',\overline{v}').
\end{align*} 
% \begin{multline*}
% \left| \ang{v' - \overline{v}' + \overline{v}_* - v_*, v' - v_*}^2 -  \ang{\overline{v}_* - v_*, v' - v_* }^2 \right| 
% \\ 
%  = 
%  \left| \ang{v' - \overline{v}', v' - v_*}  \right| ~ \left| \ang{v' - \overline{v}' + \overline{v}_* - v_*, v' - v_*} + \ang{\overline{v}_* - v_*, v' - v_* } \right| 
% \\
% \lesssim |v' - v_*| ~ d(v',\overline{v}').
% \end{multline*} 
Since $|v' - v_*| \geq \delta$, it follows that the difference of the left-hand sides of \eqref{asump3} and \eqref{asump6} are bounded in magnitude by a uniform constant times $d(v', \overline{v}') |v' - v_*|^2$ (because $|v' - v_*|$ is bounded below, the squared length dominates the length itself).  Assuming that $d(v', \overline{v}')\ll 1$  and using  $|v' - v_*| \approx |\overline{v}' - \overline{v}_*|$ establishes \eqref{asump3}.
\end{proof}

\begin{proposition}\label{propGlower}
Suppose that \eqref{assumeAlower} holds for some $\Clower>0$.  Then
\begin{equation} 
\AGthing =
\int_{\ballR} dv_* \int_{\ballR} d \overline{v}_*~ {\mathbf 1}_{G} ~g_* ~\overline{g}_*
\ge \Clower^2.
\notag
\end{equation}
%Then
%\begin{equation} 
%\begin{split}
%\left( \int_{\threed} dv_* ~ g_* \right)^2 \sum_{k=-\infty}^\infty & 2^{k((n-1)+2s)}  I_k (f) 
% \\
% & \gtrsim \Clower^2 \! \int_{\threed} \!  \! d \overline{v}' \! \int_{\threed} \!  \! d v' \frac{(f' - \overline{f}')^2}{d(v',\overline{v}')^{n+2s}} \ang{v'}^{\gamma+2s+1} {\mathbf 1}_{d(v',\overline{v}') \leq \varepsilon}, 
%\end{split} \label{mainlower2}
%\end{equation}
%for some $\varepsilon>0$ depending only on $R$ and $\delta$.
\end{proposition}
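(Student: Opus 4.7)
The plan is to exploit the fact that the set $G$ encodes precisely the two geometric conditions \eqref{asump4} and \eqref{asump6} on the pair $(v_*,\overline{v}_*)$: namely, $v_*$ is forced to lie outside the Euclidean ball of radius $\delta$ centered at $v'$, while $\overline{v}_*$ is forced to lie outside a linear tube of radius $\delta$ whose axis is the line through $v'$ and $v_*$. Both of these exclusion regions are covered by Assumption L, so the strategy is to apply \eqref{assumeAlower} twice, once to each variable, after writing $\AGthing$ as an iterated integral via Fubini's theorem.

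First I would perform the inner integration in $\overline{v}_*$ with $v'$ and $v_*$ held fixed. For fixed $v'$ and $v_*$ (with $v_*$ not collinear-coincident, as will be enforced by the outer integral), the set of ``bad'' $\overline{v}_*$ violating \eqref{asump6} is literally a linear tube of radius $\delta$ about the line through $v'$ and $v_*$. Calling this tube $T_\delta(v',v_*)$, Assumption L gives
\begin{equation*}
\int_{\ballR \setminus T_\delta(v',v_*)} d\overline{v}_*~\overline{g}_* \;\ge\; \Clower,
\end{equation*}
uniformly in the location of the axis.

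Next, for the outer integration over $v_*$, I would use the simple observation that the Euclidean ball $B(v',\delta)$ is contained in every linear tube of radius $\delta$ whose axis passes through $v'$. Fixing any such axis (for example, the line through $v'$ in a fixed coordinate direction) and calling the resulting tube $\tilde T_\delta$, we have $\ballR\cap\{|v'-v_*|<\delta\}\subset \ballR\cap \tilde T_\delta$, and hence
\begin{equation*}
\int_{\ballR \cap \{|v'-v_*|\ge\delta\}} dv_*~g_* \;\ge\; \int_{\ballR \setminus \tilde T_\delta} dv_*~g_* \;\ge\; \Clower,
\end{equation*}
again by \eqref{assumeAlower}. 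Combining these two bounds through Fubini yields $\AGthing \ge \Clower^2$, as claimed. There is no serious obstacle here; the only conceptual point worth emphasizing is the recognition that the ball-type constraint \eqref{asump4} can itself be upgraded to a tube-type constraint, allowing a single hypothesis (Assumption L) to handle both exclusion regions with the same constructive constant $\Clower$.
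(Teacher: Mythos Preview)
Your argument is correct and matches the paper's proof essentially line for line: Fubini, then Assumption~L on the inner $\overline{v}_*$-integral (the exclusion region from \eqref{asump6} is exactly a radius-$\delta$ tube), then the observation that $B_\delta(v')$ sits inside any radius-$\delta$ tube whose axis passes through $v'$, so Assumption~L handles the outer $v_*$-integral as well. The one point you glossed over is that $G$ is defined by \eqref{asump1}--\eqref{asump3}, not by \eqref{asump4}--\eqref{asump6}; the paper first invokes Proposition~\ref{changeasump} to pass from $G$ to the simpler set $G'$ determined by \eqref{asump4}--\eqref{asump6}, and then proceeds exactly as you do.
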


\begin{proof}
By  Proposition \ref{changeasump}, it suffices to estimate the integral $\AGthing$
uniformly from below for all $v'$ with the set $G$ replaced by the set $G'$ determined by the constraints \eqref{asump4} and \eqref{asump6}.  Clearly for each fixed $v_*$ with $|v_* - v'| \geq \delta$ the integral over $\overline{v}_*$ is exactly the integral of $g$ over the ball $\ballR$ minus a tube of radius $\delta$ determined by $v'$ and $v_*$.  Thus the Fubini theorem dictates that
\[ 
\int_{\ballR} dv_* ~\int_{\ballR} d \overline{v}_* ~{\mathbf 1}_{G'} ~g_* \overline{g}_* 
\geq \Clower \int_{\ballR \setminus B_\delta(v')} dv_* ~ g_*. 
\]
To conclude, we observe that $\ballR$ minus the ball of radius $\delta$ around $v'$ is a strictly larger set than the ball minus {\it any} tube of radius $\delta$ whose axis passes through $v'$.
\end{proof}

At this point, with Proposition \ref{propGlower}, we may estimate $\AGthing$ and consequently use \eqref{lowerMAINsum} to deduce \eqref{mainlower} aside from the limitation that the distance $d(v',\overline{v}')$ would be constrained to be less than or equal to $\varepsilon>0$ rather than the larger bound $1$ appearing in the definition of $N^{s,\gamma}$.  This turns out to be a minor difference which is fixed in the next subsection.

\subsection{The completion of \eqref{mainlower}}
For completeness, we eliminate the parameter $2^{-k_0} \varepsilon$ appearing in 
\eqref{lowerMAINsum}
and
\eqref{lowersumineq}.  
We will use the estimate \eqref{ineqUPPERfT} (with the roles of $f$ and $\overline{f}'$ reversed) combined with  the trick of adding extra variables of integration and bounding below.  First we add extra variables of integration:
\begin{align*}
J_{\varepsilon}(f)
\eqdef
 \int_{\R^n} dv' & \int_{\R^n} d \overline{v}' (f' - \overline{f}')^2 \ang{v'}^{\gamma+2s+1} {\mathbf 1}_{d(v',\overline{v}') \leq \varepsilon} \\
 & = \int_{\R^n} dv \int_{\R^n} dv' \int_{\R^n} d \overline{v}' (f' - \overline{f}')^2 \ang{v'}^{\gamma+2s+1} {\mathbf 1}_{d(v',\overline{v}') \leq \varepsilon} \frac{{\mathbf 1}_{d(v,\overline{v}') \leq \varepsilon}}{|\tilde B_\varepsilon(\overline{v}')|}
\\
& = \int_{\R^n} dv \int_{\R^n} dv' \int_{\R^n} d \overline{v}' (f - \overline{f}')^2 \ang{v}^{\gamma+2s+1} \frac{{\mathbf 1}_{d(v',\overline{v}') \leq \varepsilon}}{|\tilde B_\varepsilon(\overline{v}')|} {\mathbf 1}_{d(v,\overline{v}') \leq \varepsilon}.
\end{align*}
Here $\tilde B_\varepsilon(\overline{v}')$ is the anisotropic ball of radius $\varepsilon$ 
centered at $\overline{v}'$ and $| \tilde B_\varepsilon(\overline{v}')|$ is its measure.  Now we use \eqref{ineqUPPERfT} (together with the properties $\ang{v} \approx \ang{v'}$ and $|\tilde B_\varepsilon (\overline{v}')| \approx |\tilde B_\varepsilon (v')|$, which hold uniformly when $\varepsilon \leq 1$) to conclude that
$$
J_{\varepsilon}(f)+J_{\varepsilon}(f)
  \gtrsim \int_{\threed} dv \int_{\threed} dv' (f - f')^2 \ang{v'}^{\gamma+2s+1} H(v,v'),
$$
where
\[ 
H(v,v') \eqdef \frac{1}{|\tilde B_\varepsilon(v')|} \int_{\threed} d \overline{v}' ~ {\mathbf 1}_{d(v,\overline{v}') \leq \varepsilon} {\mathbf 1}_{d(v',\overline{v}') \leq \varepsilon}. 
\]
Now the equalities
\begin{align*}
 \left|\frac{v+v'}{2} - v \right| & = \left|\frac{v+v'}{2} - v'\right| = \left|\frac{v-v'}{2} \right|, \\
\left|\frac{v+v'}{2}\right|^2 - |v|^2 & = -\frac{1}{2} |v|^2 + \frac{1}{2} |v'|^2 - \frac{1}{4} |v-v'|^2,
\end{align*}
allow for the following estimate of the anisotropic distance:
\begin{align*}
 d \left( v, \frac{v+v'}{2} \right)^2 & = \frac{|v-v'|^2}{4} + \frac{1}{4}\left( \left( -\frac{1}{2} |v|^2 + \frac{1}{2} |v'|^2 \right) - \frac{1}{4} |v-v'|^2  \right)^2 \\
&
 = \frac{d(v,v')^2}{4} - \frac{1}{2} \left( -\frac{1}{2} |v|^2 + \frac{1}{2} |v'|^2 \right)\frac{1}{4} |v-v'|^2  +\frac{1}{4} \frac{1}{16} |v-v'|^4 
\\
 & \leq \frac{d(v,v')^2}{4} +\frac{d(v,v')^3}{8} +\frac{d(v,v')^4}{64}.
 %+ \frac{1}{16}\frac{3}{4} |v-v'|^4 .
\end{align*}
If $d(v,v') \leq 1$, 
then it follows that $d \left( v, \frac{v+v'}{2} \right) \leq \frac{3}{4} d(v,v')$.
By symmetry, it also follows that
$d \left( v', \frac{v+v'}{2} \right) \leq \frac{3}{4} d(v,v')$ as well. 
In particular, when $d(v,v') \leq \frac{10}{9} \varepsilon$, the intersection of $d(v,\overline{v}') \leq \varepsilon$ and $d(v',\overline{v}') \leq \varepsilon$ will contain the set of points $v'$ for which $d(\overline{v}', \frac{v+v'}{2} ) \leq \frac{1}{6} \varepsilon$.  Therefore, we have
\[ H(v,v') \gtrsim \frac{|\tilde B_{\frac{\varepsilon}{6}} ( \frac{v+v'}{2})|}{|\tilde B_{\varepsilon} (v')|} {\mathbf 1}_{d(v,v') \leq \frac{10}{9} \varepsilon}. \]
Since $\varepsilon \leq 1$, 
the ratio of the volumes above are uniformly bounded below (as both are comparable to $\varepsilon^{n} \ang{v'}^{-1}$).  
We have therefore established that
$
J_{\varepsilon}(f)
\ge c
J_{\frac{10}{9}\varepsilon}(f).
$
for a constant $c>0$ which is uniform in $\varepsilon$.
Iterating this inequality, it follows that, for any fixed constant $\rho > 0$
\begin{equation}
J_{\varepsilon}(f) \geq c_{\rho} J_{\varepsilon \rho}(f)
%% \ge C^9
%% J_{2\varepsilon}
%% \ge C^{9k_0}
%% J_{2^{k_0}\varepsilon}
%% \ge C^{9(k_0+k_m)}
%% J_{2^{k_0+k_m}\varepsilon}.
\label{mainLOWERiter}
\end{equation}
provided only that $\rho \varepsilon \leq 1$.
Now by \eqref{lowerMAINsum} and Proposition \ref{propGlower}, we have
%This shows that we may without penalty add any finite number of additional terms below $k_0$ in \eqref{lowerMAINsum}
%and
%\eqref{lowersumineq}.  
%From \eqref{lowerMAINsum}, Proposition \ref{propGlower} and \eqref{mainLOWERiter} we obtain that
$$
N_g(f)~  \nsm g \nsm_{L^1(\ballR)}
 \gtrsim   
 \Clower^2
  \sum_{k=k_0}^\infty 2^{k(n + 2s)}  ~ J_{\varepsilon 2^{-k}}(f).
%   \gtrsim   
% \Clower^2
%  \sum_{k=k_m}^\infty 2^{k(n + 2s)}  ~ J_{\varepsilon 2^{-k}}(f).
$$
But by \eqref{mainLOWERiter}, we have
\[     J_{\varepsilon 2^{-k}}(f) \geq c_{2^{k_0} \varepsilon^{-1} } J_{2^{-k+k_0}},
 \]
uniformly provided that $k \geq k_0$, 
where the precise value of $c_{2^{k_0} \varepsilon^{-1} }$ is irrelevant since it is ultimately determined only by the dimension, $R$, and $\delta$.
%Now we can use \eqref{lowersumineq} with  $k_0$ replaced by some $k_m<0$ satisfying that $2^{-k_m} \varepsilon \approx1$.  In summary,  the procedure outlined in 
Thus we have that
$$
N_g(f)~  \nsm g \nsm_{L^1(\ballR)}
 \gtrsim   
 \Clower^2
  \sum_{k=k_0}^\infty 2^{k(n + 2s)}  ~ J_{\varepsilon 2^{-k}}(f) \gtrsim  \Clower^2
  \sum_{k=k_0}^\infty 2^{k(n + 2s)}  ~ J_{2^{-k+k_0}}(f).
$$
Shifting the index of summation on the right-hand side down by $k_0$ and using the estimate \eqref{lowersumineq} with $\varepsilon = 1$ clearly establishes that the right-hand side is uniformly bounded below by $C_g^2 |f|^2_{\dot N^{s,\gamma}}$. This is
exactly \eqref{mainlower} and therefore the proof of Theorem \ref{mainLOWERthm} is complete.

\section{Entropy production estimate}
\label{sec:entropy}

In this section we prove Theorem \ref{entTHM},
which will follow from Theorem \ref{mainLOWERthm}.
We use the entropy production functional \eqref{entropyPRODf}.
From, for instance, \cite{MR1715411,MR1765272}, we split \eqref{entropyPRODf} as
$$
D(g,f) =  S(g,f) + T(g,f),
$$
where 
\begin{equation}
\begin{split}
S(g,f) & \eqdef 
  \int_{\threed}dv \int_{\threed} dv_* \int_{\sph} d\sigma ~ B (v-v_*, \sigma) ~ g_* ~
\left(f \log \frac{f}{f'} + f'  -  f  \right),
%\left(\sqrt{f'}  - \sqrt{f}  \right)^2  \left(\sqrt{g'_*}  + \sqrt{g_*}  \right)^2,
\\
T(g,f) & \eqdef 
-  \int_{\threed} dv_* ~ g_* ~
\int_{\threed}dv  \int_{\sph} d\sigma ~ B (v-v_*, \sigma) ~ \left(f'  - f  \right).
\end{split}
%\label{entropySPLITest}
\notag
\end{equation}
This follows easily from the pre-post collisional change of variables.   
From the elementary inequality
$
a \log\frac{a}{b} - a + b \ge  ( \sqrt{a} - \sqrt{b} )^2,
$
%for $a, b \ge 0$, 
see e.g. \cite{MR1765272}, we obtain that 
$$
S(g,f) \ge
  \int_{\threed}dv \int_{\threed} dv_* \int_{\sph} d\sigma ~ B (v-v_*, \sigma) ~ g_* ~
\left(\sqrt{f'}  -  \sqrt{f}  \right)^2,
$$
which immediately implies $S(g,f) \ge N_g(\sqrt{f})$ with $N_g(\sqrt{f})$ from \eqref{cancellationSPLIT}.  Now Theorem \ref{entTHM} follows easily from Theorem \ref{mainLOWERthm} and the following estimate for $T_g(f)$.
Notice that with the cancellation lemma \cite{MR1765272}, using the arguments as in Section \ref{sec:ls} with
\eqref{cancellationSPLIT}
and
\eqref{est:KGest}, it is easy to see that
$\left| T_g(f)\right| \lesssim C'  \Cupper \nsm f \nsm_{L^1_{\gamma}}$.
This establishes Theorem \ref{entTHM}.
% which is analogous to \eqref{est:KGest}.

%\newpage
\section*{Appendix: The dual formulation and other representations}
%\label{secAPP:HSr}

Finally, we derive the dual formulation from \eqref{dualOPdef} and \eqref{3dualZ}.  For this we will use the Carleman-type representation which can be found for instance in \cite[Appendix]{gsNonCutJAMS}.
Notice also that \cite[Appendix C]{09-GPV} gives a proof of Carleman-type representations.
The functions $b$ and $\Phi$ below are given by \eqref{kernelQ} and \eqref{kernelP}.  However, to derive the dual formulation it suffices to suppose that both of these functions are smooth.  The general expressions can then be deduced by approximation.  Furthermore,   we derive the ``co-plane change of variables'' from \eqref{doubleCARL}.

\subsection*{Dual Representation}
We initially suppose that $\int_{{\mathbb S}^{n-1}} d \sigma ~ |b( \ang{k, \sigma} )| < \infty$ and that the kernel $b$ has mean zero, i.e., $\int_{{\mathbb S}^{n-1}} d \sigma ~ b(\ang{k,\sigma}) = 0$. 
Then after the pre-post change of variables $(v, v_*) \to  (v', v_*')$ we can express \eqref{BoltzCOL} as
\begin{multline*}
 \ang{f,\collOPd h}= \int_{\threed} \! \! dv  \int_{\threed} \! \! dv_* \int_{{\mathbb S}^{n-1}} \! \! d \sigma
 ~ \Phi(|v-v_*|) b \left( \ang{k, \sigma} \right) g_* f \left(h' - h \right) \nonumber 
  \\
  = \int_{\threed} dv ~ \int_{\threed} dv_* ~ \int_{\sph}  d \sigma ~ \Phi(|v-v_*|) b \left( \ang{k, \sigma} \right) g_* f  h'. \nonumber 
 \end{multline*}
%This follows from the vanishing of $\int_{{\mathbb S}^{n-1}} b( \ang{k, \sigma} ) d \sigma$.  
Now with the Carleman representation, e.g.
\cite[Appendix]{gsNonCutJAMS}, we have
 \begin{gather*}
\ang{f,\collOPd h}=
 2^{n-1} \int_{\threed} \!  \!dv_* \! \int_{\threed} \! \! dv' \!   \int_{E^{v'}_{v_*}}d\pi_{v} ~
\Phi(|v-v_*|)  
\frac{b\left(\ang{\frac{v-v_*}{|v-v_*|}, \frac{2v' - v - v_*}{|2v' - v - v_*|} } \right)}{|v' - v_*|~|v - v_*|^{n-2}} g_* f  h'. 
 \nonumber
\end{gather*}
%As usual above $d\pi_{v}$ is Lebesgue measure on the ($n-1$)-dimensional plane $E_{v_*}^{v'}$ passing through $v'$ with normal $v' - v_*$, and $v$ is the variable of integration.  
The definitions of these notations, $E^{v'}_{v_*}$ and $d\pi_{v}$,  were given previously in the paragraph containing \eqref{dualOPdef}.
Furthermore from the identity \eqref{Cidenity} we observe that
\begin{align*}
\int_{E_{v_*}^{v'}} d\pi_{v} ~ &  b \left( \ang{\frac{v-v_*}{|v-v_*|}, \frac{2v' - v - v_*}{|2v' - v - v_*|} } \right) \frac{|v'-v_*|^{n-1}}{|v-v_*|^{2n-2}} \\
& = \int_{{\mathbb S}^{n-2}} d \sigma 
\int_0^\infty r^{n-2} \ dr \ b \left( \frac{|v'-v_*|^2 - r^2}{|v' - v_*|^2 + r^2} \right) \frac{|v' - v_*|^{n-1}}{(r^2 + |v'-v_*|^2)^{n-1}} = 0,
\end{align*}
by a change of variables to polar coordinates since $\int_{-1}^1 dt \ b(t) (1-t^2)^{\frac{n-3}{2}} = 0$ (following from the cancellation condition on $\sph$) and 
\[ \frac{d}{dr} \left[ \frac{|v'-v_*|^2 - r^2}{|v' - v_*|^2 + r^2} \right] = %\frac{-2r (r^2 + |v'-v_*|^2) - 2r (-r^2 + |v' - v_*|^2)}{(r^2 + |v' - v_*|^2)^2} = 
\frac{-4 r |v' - v_*|^2}{(r^2 + |v' - v_*|^2)^2}, \]
\[ \left( 1 - \left(\frac{|v'-v_*|^2 - r^2}{|v' - v_*|^2 + r^2}\right)^2 \right)^\frac{n-3}{2} = \frac{(2 r |v'-v_*|)^{n-3}}{(r^2 + |v' - v_*|^2)^{n-3}}. \]  
In particular, this implies with $\tilde{B}$ from \eqref{kernelTILDE} that
\[ 
\int_{E_{v_*}^{v'}} d \pi_v ~ \tilde{B} ~     \frac{\Phi(|v'-v_*|) |v'-v_*|^n}{\Phi(|v-v_*|) |v-v_*|^n} ~ g_*~ f'~ h' = 0. 
\]
%\[ 
%\int_{E_{v_*}^{v'}} \frac{d \pi_v \ \Phi(|v'-v_*|)}{|v'-v_*| |v - v_*|^{n-2}}  ~    b \left( \ang{\frac{v-v_*}{|v-v_*|}, \frac{2v' - v - v_*}{|2v' - v - v_*|} } \right) \frac{|v'-v_*|^n}{|v-v_*|^n} ~ g_* f' h' = 0. 
%\]
We subtract this expression from the Carleman representation just written for 
$ \ang{f, \collOPd h}$, and use the kernel defined in \eqref{kernelTILDE} to see that \eqref{dualOPdef} holds 
%\begin{multline*}
%\ang{f, \collOPd h}
%=
%  \int_{\threed}   dv_* ~ \int_{\threed}  dv' ~ \int_{E_{v_*}^{v'}} d \pi_v  ~   \frac{ b \left( \ang{\frac{v-v_*}{|v-v_*|}, \frac{2v' - v - v_*}{|2v' - v - v_*|} } \right)  }{|v' - v_*| |v-v_*|^{n-2}}  ~ 
%~  
%\\
% \times 2^{n-1}
%\left( \Phi(v-v_*) f    -   \Phi(v'-v_*) \frac{|v' - v_*|^n}{|v-v_*|^n} f'  \right)  g_* h'. 
%\end{multline*}
with \eqref{opGlabel}.
%The expression \eqref{dualOPdef}  will be called the ``dual representation.''

The claim is now that this representation \eqref{dualOPdef} holds even when the mean value of the singular kernel $b(\ang{k, \sigma})$ from \eqref{kernelQ}  is not zero.  To see this claim, suppose that $b$ is integrable but without mean zero.  Then define
$$
b_\epsilon(t) \eqdef b(t) - \ind_{[1-\epsilon,1]}(t) \int_{-1}^1 dt ~ b(t) ~ (1-t^2)^{\frac{n-3}{2}} ~ 
\left( \int_{1-\epsilon}^1 dt (1-t^2)^{\frac{n-3}{2}}\right)^{-1}.
$$
As a function on ${\mathbb S}^{n-1}$, $b_\epsilon$ will clearly have a vanishing integral.  However, given arbitrary $f$, $g$ and $h$ which are Schwartz functions, it is not hard to see that
\[ 
\left| \ang{f, \collOPd h} - \ang{ f, \collOPd^\epsilon h}\right| \rightarrow 0,
\quad
\epsilon \rightarrow 0. 
\]
Above $\collOPd^\epsilon$ is the operator $\collOPd$ formed with $b_\epsilon(t)$ in place of $b(t)$.  This convergence holds %in particular 
because  cancellation guarantees that the integrand vanishes on the set defined by $\ang{k,\sigma} = 1$.  Moreover, 
an additional cutoff argument shows that the equality also holds provided that $b(t)$ satisfies \eqref{kernelQ}; 
the higher-order cancellation is preserved because $\frac{|v'-v_*|}{|v-v_*|}$ possesses radial symmetry in $v-v'$.

The  ``dual representation'' \eqref{dualOPdef} deserves its name
because if one defines
\begin{align*}
\collOPd h (v) & \eqdef \int_{\threed} dv_* \int_{{\mathbb S}^{n-1}} d \sigma ~B~ g_* \left( h' -  h \right), 
\\
\ScollOPd f (v') & \eqdef  \int_{\threed} dv_* \int_{E_{v_*}^{v'}} d \pi_{v} ~\tilde{B}~    g_*
 \left(   f   -    \frac{\Phi(v'-v_*) |v' - v_*|^n}{\Phi(v-v_*) |v-v_*|^n}  f' \right),
\end{align*} 
then 
\begin{equation}
\label{3dualZ}
\ang{ \mathcal{Q}(g, f) , h} 
=
\ang{ f , \collOPd h} 
= 
\ang{\ScollOPd f,  h }. 
\end{equation}
Note that the last inner product above represents an integration over $dv'$ whereas the first two inner products above represent integrations over $dv$.

\subsection*{The co-plane identity}
The goal of this section is to prove the co-plane change of variables from \eqref{doubleCARL}.  
It suffices to prove the formula \eqref{doubleCARL} for continuous, compactly supported $H$.  The proof follows rather directly from the coarea formula for codimension $2$, see e.g. \cite{MR0257325}.  For fixed $v$, for example, the coarea formula gives 
\[ 
\int_{\dsp_{v_*}^v} \! d \sigma_{v'} H = \lim_{\epsilon \rightarrow 0^+} \left( \frac{|v-v_*|}{2} \right)^{-(n-1)} \frac{1}{\epsilon} \int_{\threed} dv' H |v-v_*| {\mathbf 1}_{\left| \ang{v' - v, v' - v_*} \right| \leq \frac{\epsilon}{2}}, 
\]
since the sphere ${\mathbb S}^v_{v_*}$, as a function of $v'$, is precisely the zero set 
of the Lipschitz function
$ 
\ang{v' - v, v' - v_*}
$
as in \eqref{sphereUN}.
The quantity $|v-v_*|$ inside the integrand is the magnitude of the gradient of $\ang{v' - v, v' - v_*}$ on its zero set (which is required by the coarea formula) and the external term $2^{n-1} |v-v_*|^{-(n-1)}$ is a normalization factor.  Thus the left side of \eqref{doubleCARL} may be realized as a limit as $\epsilon \rightarrow 0^+$ of 
\[ 2^{2(n-1)} \frac{1}{\epsilon^2} \int_{\threed} \! dv \int_{\threed} \! dv' \int_{\threed} \! d\overline{v}' \frac{H {\mathbf 1}_{\left| \ang{v' - v, v' - v_*} \right| \leq \frac{\epsilon}{2}} {\mathbf 1}_{\left| \ang{\overline{v}' - v, \overline{v}' - \overline{v}_*} \right| \leq \frac{\epsilon}{2}} }{|v-v_*|^{n-2} |v - \overline{v}_*|^{n-2}}. \]
Now use the Fubini theorem to evaluate the integral with respect to $v$ first.  In this case the limit is supported on the intersection of the two zero sets, which is exactly the co-plane $E_{v',v_*}^{\overline{v}',\overline{v}_*}$.  The $2$-dimensional Jacobian of these two constraint functions (now with respect to $v$) is exactly $D^\frac{1}{2}$ from \eqref{quantityDdef}, so that we have
\begin{align*}
 \lim_{\epsilon \rightarrow 0^{+}}   \frac{1}{\epsilon^2} \int_{\threed} dv  \frac{H {\mathbf 1}_{\left| \ang{v' - v, v' - v_*} \right| \leq \frac{\epsilon}{2}} {\mathbf 1}_{\left| \ang{\overline{v}' - v, \overline{v}' - \overline{v}_*} \right| \leq \frac{\epsilon}{2}} }{|v-v_*|^{n-2} |v - \overline{v}_*|^{n-2}} &   \\
= \int_{E_{v',v_*}^{\overline{v}',\overline{v}_*}} \! d \pi_{v} &  \frac{H}{|v-v_*|^{n-2} |v - \overline{v}_*|^{n-2} D^{\frac{1}{2}}}. 
\end{align*}
Substituting this back into the integral over $v'$ and $\overline{v}'$ establishes \eqref{doubleCARL}.

\begin{bibdiv}
\begin{biblist}

\bib{MR1765272}{article}{
   author={Alexandre, R.},
   author={Desvillettes, L.},
   author={Villani, C.},
   author={Wennberg, B.},
   title={Entropy dissipation and long-range interactions},
   journal={Arch. Ration. Mech. Anal.},
   volume={152},
   date={2000},
   number={4},
   pages={327--355},
   issn={0003-9527},
   %  review={\MR{1765272 (2001c:82061)}},
}

\bib{MR2284553}{article}{
   author={Alexandre, Radjesvarane},
   title={Integral estimates for a linear singular operator linked with the
   Boltzmann operator. I. Small singularities $0<\nu<1$},
   journal={Indiana Univ. Math. J.},
   volume={55},
   date={2006},
   number={6},
   pages={1975--2021},
   issn={0022-2518},
%   review={\MR{2284553 (2008j:35023)}},
%   doi={10.1512/iumj.2006.55.2861},
}

\bib{MR1763526}{article}{
   author={Alexandre, Radjesvarane},
   title={Around 3D Boltzmann non linear operator without angular cutoff, a
   new formulation},
   journal={M2AN Math. Model. Numer. Anal.},
   volume={34},
   date={2000},
   number={3},
   pages={575--590},
   issn={0764-583X},
%   review={\MR{1763526 (2002a:82052)}},
 %  doi={10.1051/m2an:2000157},
}

\bib{krmReview2009}{article}{
   author={Alexandre, Radjesvarane},
     TITLE = {A Review of Boltzmann Equation with Singular Kernels},
   JOURNAL = {Kinet. Relat. Models},
  FJOURNAL = {Kinetic and Related Models},
    VOLUME = {2},
      YEAR = {December 2009},
    NUMBER = {4},
     PAGES = {551--646},
%   review={\MR{2284553 (2008j:35023)}},
%   doi={10.1512/iumj.2006.55.2861},
}
\bib{arXiv:0909.1229v1}{article}{
    author = {Alexandre, Radjesvarane},
        author = {Morimoto, Y.},
            author = {Ukai, Seiji},
        author = {Xu, Chao-Jiang},
        author = {Yang, Tong},
	title = {Regularizing effect and local existence for non-cutoff Boltzmann equation},
   journal={Arch. Ration. Mech. Anal.},
      volume={198},
   date={2011},
   number={1},
   pages={39--123},
}
\bib{newNonCutAMUXY}{article}{
    author = {Alexandre, Radjesvarane},
        author = {Morimoto, Y.},
            author = {Ukai, Seiji},
        author = {Xu, Chao-Jiang},
        author = {Yang, Tong},
	title = {Global existence and full regularity of the Boltzmann equation without angular cutoff},
   journal={Comm. Math. Phys.},
   volume={304},
   date={2011},
   number={2},
   pages={513--581},
}

\bib{MR1128328}{article}{
   author={Bobyl{\"e}v, A. V.},
   title={The theory of the nonlinear spatially uniform Boltzmann equation
   for Maxwell molecules},
   conference={
      title={Mathematical physics reviews, Vol.\ 7},
   },
   book={
      series={Soviet Sci. Rev. Sect. C Math. Phys. Rev.},
      volume={7},
      publisher={Harwood Academic Publ.},
      place={Chur},
   },
   date={1988},
   pages={111--233},
   %  review={\MR{1128328 (92m:82112)}},
}

\bib{MR0255199}{book}{
   author={Cercignani, Carlo},
   title={Mathematical methods in kinetic theory},
   publisher={Plenum Press},
   place={New York},
   date={1969},
   pages={ix+227},
%   review={\MR{0255199 (40 \#8404)}},
}

\bib{MR1307620}{book}{
   author={Cercignani, Carlo},
   author={Illner, Reinhard},
   author={Pulvirenti, Mario},
   title={The mathematical theory of dilute gases},
   series={Applied Mathematical Sciences},
   volume={106},
   publisher={Springer-Verlag},
   place={New York},
   date={1994},
   pages={viii+347},
   isbn={0-387-94294-7},
   %  review={\MR{1307620 (96g:82046)}},
}

\bib{ChenHeSmoothing}{article}{
   author={Chen, Yemin},
   author={He, Lingbing},
   title={Smoothing Estimates for Boltzmann Equation with Full-Range Interactions: Spatially Homogeneous Case},
   journal={to appear in Arch. Ration. Mech. Anal.},

}

\bib{ChenHeSmoothing2}{article}{
   author={Chen, Yemin},
   author={He, Lingbing},
   title={Smoothing Estimates for Boltzmann Equation with Full-Range Interactions: Spatially InHomogeneous Case},
   journal={to appear in Arch. Ration. Mech. Anal.},
}

\bib{MR2052786}{article}{
   author={Desvillettes, Laurent},
   title={About the use of the Fourier transform for the Boltzmann equation},
   note={In: Summer School on ``Methods and Models of Kinetic Theory'' (M\&MKT 2002)},
   journal={Riv. Mat. Univ. Parma (7)},
   volume={2*},
   date={2003},
   pages={1--99},
%   issn={0035-6298},
%   review={\MR{2052786 (2005a:35040)}},
}

\bib{MR0257325}{book}{
   author={Federer, Herbert},
   title={Geometric measure theory},
   series={Die Grundlehren der mathematischen Wissenschaften, Band 153},
   publisher={Springer-Verlag New York Inc., New York},
   date={1969},
   pages={xiv+676},
%   review={\MR{0257325 (41 \#1976)}},
}

\bib{09-GPV}{article}{
   author={Gamba, I. M.},
   author={Panferov, V.},
   author={Villani, C.},
   title={Upper Maxwellian Bounds for the Spatially
Homogeneous Boltzmann Equation},
   journal={Arch. Ration. Mech. Anal.},
   volume={194},
   date={2009},
%   number={?},
   pages={253--282},
   }
%

%\bib{gsNonCut1}{article}{
%   author={Gressman, Philip T.},
%      author={Strain, Robert M.},
%   title={Global Strong Solutions of the Boltzmann Equation without  Angular Cut-off},
%    eprint = {arXiv:0912.0888v1},
%   date={Dec. 4 2009},
%    journal={submitted},
%   pages={55pp.},
%}

\bib{gsNonCutA}{article}{
   author={Gressman, Philip T.},
      author={Strain, Robert M.},
   title={Global Classical solutions of the Boltzmann equation with Long-Range interactions},
   date={March 30, 2010},
    journal={Proc. Nat. Acad. Sci. U. S. A.},
       volume={107},
   number={13},
   pages={5744-5749},
	eprint={doi: 10.1073/pnas.1001185107}
}

\bib{gsNonCutJAMS}{article}{
   author={Gressman, Philip T.},
      author={Strain, Robert M.},
   title={Global Classical Solutions of the Boltzmann Equation without Angular Cut-off},
   journal={J. Amer. Math. Soc.},
   volume={24},
   date={2011},
   number={3},
   pages={771--847},
eprint={doi: 10.1090/S0894-0347-2011-00697-8}
}

%\bib{gsNonCut2}{article}{
%   author={Gressman, Philip T.},
%      author={Strain, Robert M.},
%   title={Global Classical Solutions of the Boltzmann Equation with Long-Range Interactions and Soft-Potentials},
%    eprint = {arXiv:1002.3639v1},
%   date={Feb. 15 2010},
%    journal={submitted},
%   pages={51pp.},
%}
%		

\bib{MR1946444}{article}{
   author={Guo, Yan},
   title={The Landau equation in a periodic box},
   journal={Comm. Math. Phys.},
   volume={231},
   date={2002},
   number={3},
   pages={391--434},
   issn={0010-3616},
   %  review={\MR{1946444 (2004c:82121)}},
}

\bib{MR2221254}{article}{
   author={Klainerman, S.},
   author={Rodnianski, I.},
   title={A geometric approach to the Littlewood-Paley theory},
   journal={Geom. Funct. Anal.},
   volume={16},
   date={2006},
   number={1},
   pages={126--163},
   issn={1016-443X},
   %  review={\MR{2221254 (2007e:58046)}},
}
\bib{MR1649477}{article}{
   author={Lions, Pierre-Louis},
   title={R\'egularit\'e et compacit\'e pour des noyaux de collision de
   Boltzmann sans troncature angulaire},
%   language={French, with English and French summaries},
   journal={C. R. Acad. Sci. Paris S\'er. I Math.},
   volume={326},
   date={1998},
   number={1},
   pages={37--41},
   issn={0764-4442},
%   review={\MR{1649477 (99h:82062)}},
%   doi={10.1016/S0764-4442(97)82709-7},
}

\bib{MR2322149}{article}{
   author={Mouhot, Cl{\'e}ment},
   author={Strain, Robert M.},
   title={Spectral gap and coercivity estimates for linearized Boltzmann
   collision operators without angular cutoff},
   %  language={English, with English and French summaries},
   journal={J. Math. Pures Appl. (9)},
   volume={87},
   date={2007},
   number={5},
   pages={515--535},
   issn={0021-7824},
   %  review={\MR{2322149 (2008g:82116)}},
   %  DOI={10.1016/j.matpur.2007.03.003},
       eprint = {arXiv:math.AP/0607495},
}

\bib{MR0636407}{article}{
   author={Pao, Young Ping},
   title={Boltzmann collision operator with inverse-power intermolecular
   potentials. I, II},
   journal={Comm. Pure Appl. Math.},
   volume={27},
   date={1974},
   pages={407--428; ibid. 27 (1974), 559--581},
   issn={0010-3640},
   %  review={\MR{0636407 (58 \#30519)}},
}
\bib{MR0252961}{book}{
    author = {Stein, Elias M.},
     title = {Topics in harmonic analysis related to the
              {L}ittlewood-{P}aley theory. },
    series = {Annals of Mathematics Studies, No. 63},
  publisher = {Princeton University Press},
   address = {Princeton, N.J.},
      year = {1970},
     pages = {viii+146},
%   MRCLASS = {42.50 (22.00)},
%  MRNUMBER = {MR0252961 (40 \#6176)},
%MRREVIEWER = {R. E. Edwards},
}

\bib{sNonCutOp}{article}{
   author = {{Strain}, Robert~M.},
    title = {Optimal time decay of the non cut-off Boltzmann equation in the whole space},
       date={2010},
       journal={preprint},
              eprint = {arXiv:1011.5561v2},
}

\bib{MR1715411}{article}{
   author={Villani, C{\'e}dric},
   title={Regularity estimates via the entropy dissipation for the spatially
   homogeneous Boltzmann equation without cut-off},
   journal={Rev. Mat. Iberoamericana},
   volume={15},
   date={1999},
   number={2},
   pages={335--352},
   issn={0213-2230},
%   review={\MR{1715411 (2000f:82090)}},
}

\bib{MR1942465}{article}{
    author={Villani, C{\'e}dric},
     title={A review of mathematical topics in collisional kinetic theory},
 booktitle={Handbook of mathematical fluid dynamics, Vol. I},
     pages={71\ndash 305},
          book={
 publisher={North-Holland},
     place={Amsterdam},
        },
      date={2002},
    %  review={MR1942465 (2003k:82087)},
}
\end{biblist}
\end{bibdiv}

\end{document}